\newtheorem{theorem}{Theorem}[section]
\newtheorem{proposition}[theorem]{Proposition}
\newtheorem{lemma}[theorem]{Lemma}
\newtheorem{corollary}[theorem]{Corollary}
\newtheorem{example}[theorem]{Example}
\newtheorem{remark}[theorem]{Remark}
\newtheorem{definition}[theorem]{Definition}
\newtheorem{problem}[theorem]{Problem}
\newtheorem*{theorem1*}{Theorem}
\newtheorem*{problem1*}{Problem}
\newtheorem*{conjecture1*}{Conjecture 1}
\newcommand{\inv}{^{-1}}
\newcommand{\oto}{\longleftrightarrow}
\newcommand{\dpt}{\mathtt{dpt}}
\newcommand{\dc}{\mathtt{dc}}
\newcommand{\dm}{\mathtt{dm}}
\newcommand{\ang}{\mathtt{ang}}
\newcommand{\ls}{\mathtt{ls}}
\newcommand{\hl}{\mathtt{hl}}
\newcommand{\alt}{\mathtt{alt}}
\newcommand{\ax}{\mathtt{ax}}
\newcommand{\T}{\mathbf{T}}
\newcommand{\Sh}{\boldsymbol{\Sigma}}
\begin{document}

\title{\large{\textbf{
CUBIC GRAPHS THAT CANNOT BE COVERED\\ WITH FOUR PERFECT
MATCHINGS}}}

\author{
Edita M\'a\v cajov\' a and Martin \v{S}koviera\\[3mm]
Department of Computer Science\\
Faculty of Mathematics, Physics and Informatics\\
Comenius University\\
842 48 Bratislava, Slovakia\\[2mm]
{\small\tt macajova@dcs.fmph.uniba.sk}\\[-1mm]
{\small\tt skoviera@dcs.fmph.uniba.sk}}

\date{\today}

\maketitle

\begin{abstract}
A conjecture of Berge suggests that every bridgeless cubic
graph can have its edges covered with at most five perfect
matchings. Since three perfect matchings suffice only when the
graph in question is $3$-edge-colourable, the rest of cubic
graphs falls into two classes: those that can be covered with
four perfect matchings, and those that need at least five.
Cubic graphs that require more than four perfect matchings to cover their
edges are particularly interesting as potential counterexamples
to several profound and long-standing conjectures including the
celebrated cycle double cover conjecture. However, so far they
have been extremely difficult to find.

In this paper we build a theory that describes coverings with
four perfect match\-ings as flows whose flow values and outflow patterns form a
configuration of six lines spanned by four points of the
3-dimensional projective space $\mathbb{P}_3(\mathbb{F}_2)$ in
general position. This theory provides powerful tools for
investigation of graphs that do not admit such a cover and
offers a great variety of methods for their construction.  
As an illustrative example we produce a rich family of
snarks (nontrivial cubic graphs with no $3$-edge-colouring)
that cannot be covered with four perfect matchings. The family
contains all previously known graphs with this property.

\bigskip\noindent\textbf{Keywords:}
cubic graph, snark, perfect matching, covering, cycle double cover conjecture

\bigskip\noindent\textbf{AMS subject classifications:}
05C21, 05C70, 05C15.
\end{abstract}

\section{Introduction}

Ever since Petersen proved his Perfect Matching Theorem
\cite{Pet}, perfect matchings in cubic graphs have been
regarded as their fundamental structures. An extension of
Petersen's theorem due to Sch\"onberger \cite{Sch}, later
generalised by Plesn\'\i k \cite{Plesnik}, implies that every
bridgeless cubic graph $G$ contains a set of perfect matchings
that together cover all the edges of $G$ (see also
\cite[p.~192]{Konig} and \cite[Corollary~3.4.3]{LP}). A natural
question arises as to what is the minimum number of perfect
matchings needed to cover all edges of a given bridgeless cubic
graph $G$; this number is called the \textit{perfect matching
index} of~$G$ and is denoted by $\pi(G)$ (an alternative term
\textit {excessive index} also occurs, see \cite{AKLM, EM}.)

\begin{figure}[ht]
\begin{subfigure}{.5\textwidth}
  \centering
 \includegraphics[width=.7\linewidth]{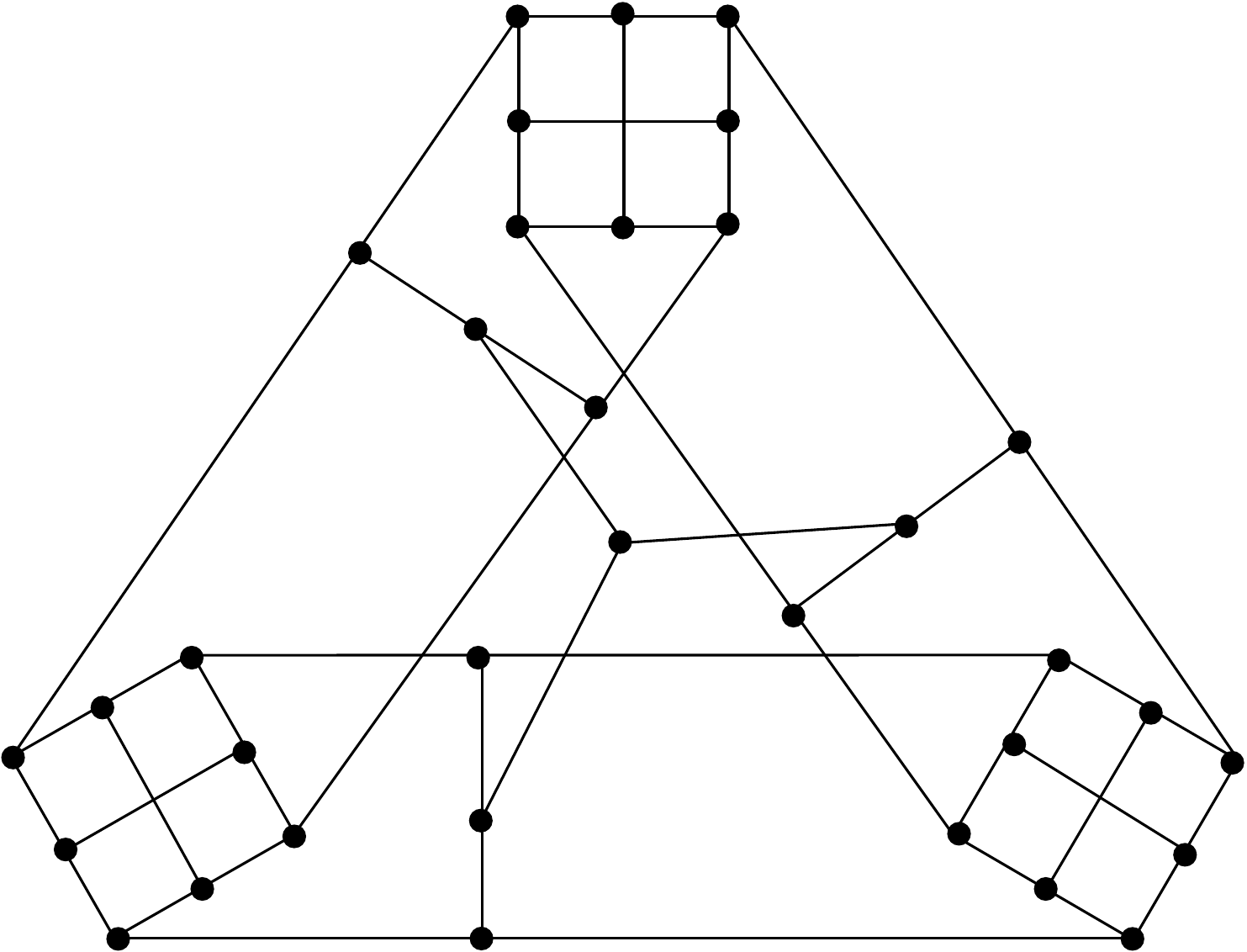}
\end{subfigure}
\begin{subfigure}{.5\textwidth}
  \centering
  \includegraphics[width=.65\linewidth]{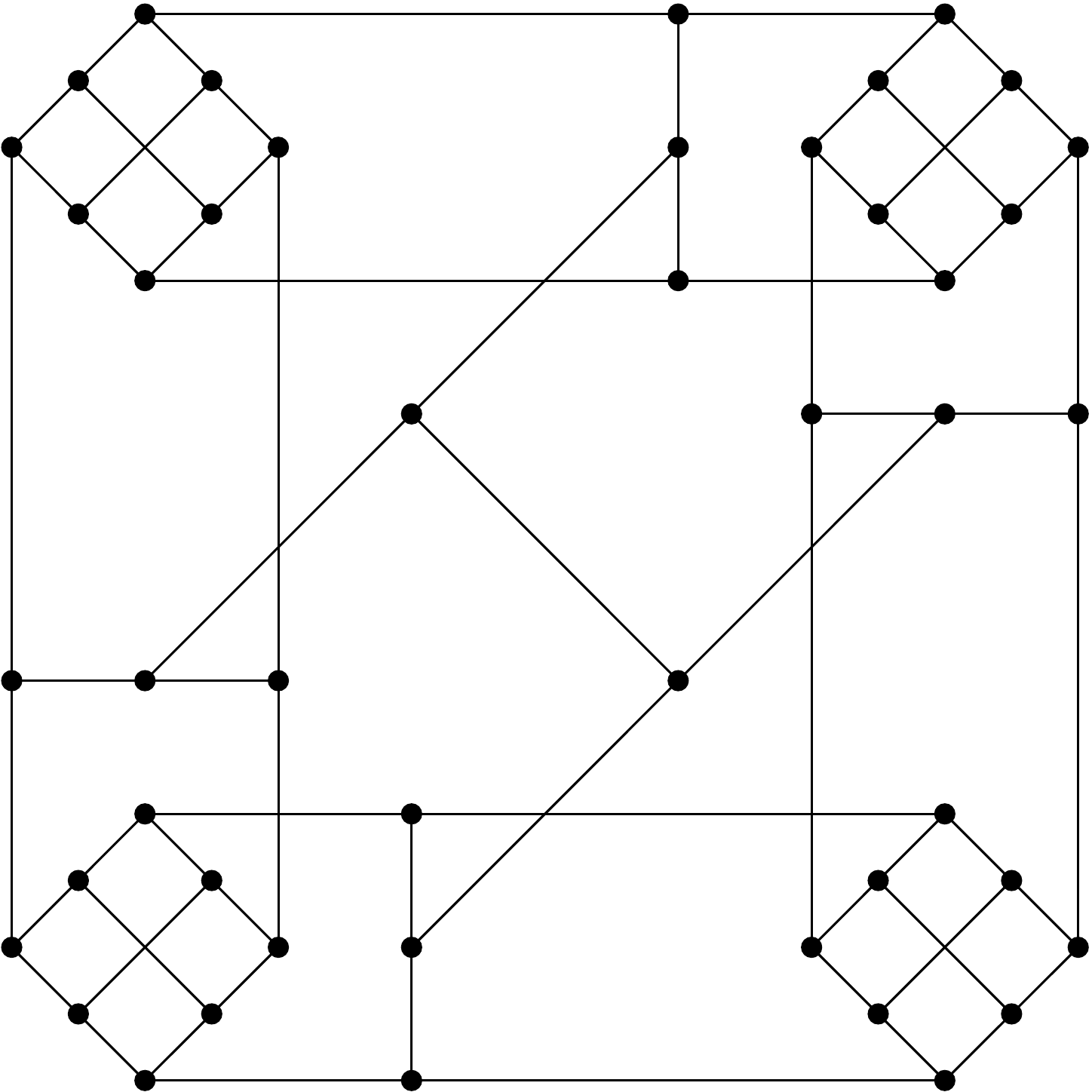}
\end{subfigure}
\caption{Graphs of order 34 and 46 that cannot be covered with
         four perfect matchings}
\label{fig:g34+46}
\end{figure}

Clearly, $\pi(G)\ge 3$ for every bridgeless cubic graph $G$
with equality achieved if and only if the graph is
$3$-edge-colourable. On the other hand, no constant upper bound
for the perfect matching index of a cubic graphs is known.
However, a conjecture attributed to Berge~(see \cite{M1} or
\cite{S}) suggests that $\pi(G)\le 5$ for every bridgeless
cubic graph~$G$. If this conjecture is true, then the perfect
matching index of every \textit{snark}, a 2-connected cubic
graph with no $3$-edge-colouring, is to be either $4$ or $5$.

Cubic graphs with $\pi=4$ enjoy several important properties
well known to hold for $3$-edge-colourable graphs. For example,
every cubic graph with $\pi=4$ satisfies the $5$-cycle double
conjecture (see \cite[Theorem~3.1]{HLZ} and 
\cite[Theorem~3.1~(2)]{Steffen}), the $7/5$-conjecture of Alon
and Tarsi and Jaeger \cite[Theorem~3.1~(1)]{Steffen} on
shortest cycle covers, the Fan-Raspaud \cite{FR} conjecture on three
perfect matchings with empty intersection, and others.
Therefore cubic graphs with $\pi\ge 5$ are of particular
interest, providing potential counterexamples to these and
several other related conjectures such as the Fulkerson
conjecture.

Very little is known about cubic graphs with $\pi\ge 5$. One
major difficulty in their study comes from the fact that they
appear to be extremely rare and therefore hard to find. The
smallest such graph is the Petersen graph. Arbitrarily large
examples with connectivity $2$ can be easily derived from the
Petersen graph, so the real problem is constructing nontrivial
examples different from the Petersen graph. In 2009, Fouquet
and Vanherpe \cite[Problem~4.3]{FH} asked whether there exists
a cyclically $4$-edge-connected cubic graph with $\pi\ge 5$
different from the Petersen graph. The first such graph was
reported by Brinkmann et al. \cite{BGHM} in 2013 as a result of
an exhausting computer search. In the list comprising all
64~326~024 cyclically $4$-edge-connected cubic snarks of girth
at least $5$ with $\pi\ge 4$ on up to 36 vertices there are
only two that cannot be covered with four perfect matchings:
the Petersen graph and the graph on 34 vertices displayed in
Figure~\ref{fig:g34+46} (left). Another sporadic example, on 46
vertices, was discovered by H\"agglund
\cite[Section~3]{Hagglund}; it is depicted on
Figure~\ref{fig:g34+46} (right). Esperet and Mazzuoccolo
\cite{EM} generalised the former graph to an infinite family of
\textit{windmill snarks} with $\pi\ge 5$. A similar infinite
family, which takes the other graph from
Figure~\ref{fig:g34+46} as its basis, was provided by Chen
\cite{Chen}. Both graphs from Figure~\ref{fig:g34+46} are
included in another infinite family of graphs with $\pi\ge 5$,
the family of \textit{treelike snarks} constructed by Abreu et
al. \cite{AKLM}, which we now discuss in a greater detail.

In some sense, treelike snarks are much richer in form than
the windmill snarks and the snarks of Chen. Their
construction makes use of cubic Halin graphs. A \textit{Halin
graph} $H$ consists of a plane tree $S$ with no $2$-valent
vertices and a circuit $C$ passing through all the leaves of
$S$ in such a way that $H=C\cup S$ remains embedded in the
plane. It follows that the circuit $C$, called the
\textit{perimeter circuit}, forms the boundary of the outer
face of~$H$, while the tree $S$, the \textit{inscribed tree} of
$H$, is contained in the interior of $C$. Halin \cite{Halin}
introduced these graphs in 1971 as a class of minimally
$3$-connected graphs. However, the cubic Halin graphs were
studied much earlier by Rademacher \cite{Rademacher} in 1965
and by Kirkman \cite{Kirkman} as early as in 1856.

A treelike snark is formed from a cubic Halin graph $H$ by
substituting each vertex of the perimeter circuit $C$ of $H$
with a 11-vertex subgraph $F_{Ps}$ found in both graphs from
Figure~\ref{fig:g34+46}, called the \textit{Petersen fragment}
of \cite{AKLM}. This makes a significant difference in
comparison with the windmill snarks and the family of Chen
where more general building blocks can be used.

One of the notable aspects of the study of cubic graphs with
$\pi\ge 5$ is the increasing difficulty of proving that four
perfect matchings are not enough to cover their edges. This
problem becomes evident if we compare the proof of Esperet and
Mazzuoccolo \cite{EM} for the windmill snarks with that for the
family constructed by Chen \cite{Chen}, and even more so if we
take into account the proof for treelike snarks \cite{AKLM},
where a substatial computer assistance is required. The reason
for this lies in the lack of a simple characterisation of
graphs with perfect matching index $5$ or more. This fact was
realised already by H\"agglund \cite{Hagglund} who posed the
following problem.

\begin{problem1*}[H\"agglund \cite{Hagglund}, Problem~3]
{\rm Is it possible to give a simple characterisation of cubic
graphs with perfect matching index equal to 5?}
\end{problem1*}

In response to H\"agglund's question we provide a
characterisation of graphs with perfect matching index not
exceeding $4$. We describe them as the graphs that admit a
proper edge-colouring by points of the configuration $T$ of ten points and six
lines in the $3$-dimensional projective space
$\mathbb{P}_3(\mathbb{F}_2)=PG(3,2)$, over the $2$-element
field, spanned by four points in general position (see
Figure~\ref{fig:tetra}). The defining property of the colouring
requires any three edges incident with the same vertex to carry
colours that form a line of $T$. Although this result seems to
be nothing more than a simple observation, it provides a
surprisingly powerful tool for the study of both cubic graphs
that can, as well as those that cannot, be covered with four
perfect matchings. Especially useful is the property that the such
colourings are in fact nowhere-zero flows with values in the
group $\mathbb{Z}_2^4$.

\begin{figure}[htbp]
\begin{center}
     \scalebox{0.45}
     {
       \input{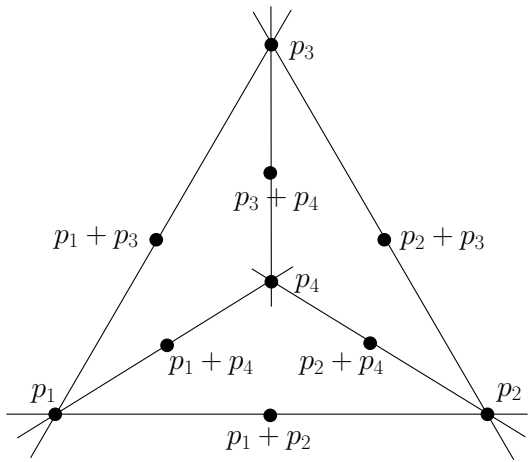}
     }
\end{center}
\caption{A tetrahedron in $PG(3,2)$ spanned by points
$p_1$, $p_2$, $p_3$, and $p_4$}\label{fig:tetra}
\end{figure}

We demonstrate the power of our approach by presenting a
far-reaching generalisation of treelike snarks. The new family
of graphs, which we call \textit{Halin snarks}, contains all
previously known nontrivial examples of graphs with $\pi\ge 5$.
In particular,  the treelike snarks, the windmill snarks, and
the snarks constructed by Chen \cite{Chen} are all Halin
snarks.

\begin{figure}[htbp]
\begin{center}
     \scalebox{0.45}
     {
       \input{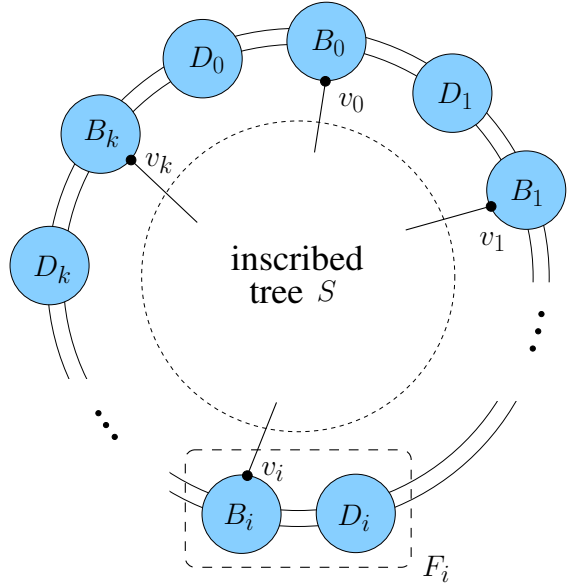}
     }
\end{center}
\caption{A Halin snark}\label{fig:halin}
\end{figure}

The construction of a Halin snark is similar to that of a
treelike snark. It starts with a cubic Halin graph $H=C\cup S$
where $C$ is the perimeter circuit and $S$ is the inscribed
tree of $H$. Each vertex of $C$ is substituted by a
\textit{Halin fragment} $F$, a generalisation of the Petersen
fragment $F_{Ps}$ mentioned above. It is formed from the union
of two graphs $D$ and~$B$: the graph $D$ is obtained from an
arbitrary bridgeless cubic graph $G$ with $\pi(G)\ge 5$ by
removing two adjacent vertices $u$ and $v$ and retaining the
four dangling edges; the graph $B$ is obtained from an
arbitrary cubic bipartite graph $G'$ by removing a path
$u'w'v'$ of length $2$ and retaining the five dangling edges.
The dangling edges of $D$ formerly incident in $G$ with $v$ are
then welded with those of $B$ formerly incident in $G'$ with
$u'$, thereby producing $F$.

Each Halin fragment $F$ has five dangling edges arranged in two
pairs, which correspond to the vertices $u$ and $v'$, and one
singleton, which corresponds to the vertex $w'$. The
construction of a Halin snark is now finished by substituting
each vertex $v_i$ of the perimeter circuit $C$ of $H$ with a
Halin fragment $F_i$ in such a way that the lonely dangling
edge of $F_i$ will replace the edge of $S$ incident with $V_i$,
and each of pairs of dangling edges of $F_i$ will replace one
edge of $C$ incident with $v_i$ following a cyclic orientation of $C$. Finally, 
the pairs of dangling
edges from Halin fragments $F_i$ and $F_j$ corresponding to
adjacent vertices $v_i$ and $v_j$ of $C$ are welded in such a way
that a cubic graph is obtained (see Figure~\ref{fig:halin}). We denote the 
resulting graph
by $H^{\sharp}=C^{\sharp}\cup S$ where $S$ is the inherited
inscribed tree of $H$ and $C^{\sharp}$ is the graph obtained
from the perimeter circuit by performing all required
substitutions.

If each Halin fragment $F_i$ used for the construction of
$H^{\sharp}$ is created from a cubic graph $G_i$ with
$\pi(G)\ge 5$ and from a bipartite cubic graph $G_i'$, both
cyclically $4$-edge-connected and of girth at least $5$, then
the same holds for the resulting graph $H^{\sharp}$.

The proof that every Halin snark has perfect matching index at
least $5$ relies on simple geometric considerations involving
the projective space $PG(3,2)$ combined with nowhere-zero flow
arguments. It is completely computer-free.

The methods developed along the way permit further extensions
of the family of Halin snarks, suggesting that the class of all
cubic graphs with perfect matching index at least $5$ may have
a very complicated structure. In particular, the following
theorem is a consequence of our results.

\begin{theorem1*}\label{thm:intro1}
For every even integer $n\ge 42$ there exists a cyclically
$4$-edge-conncected cubic graph $G$ of girth at least $5$ on
$n$ vertices such that $\pi(G)\ge 5$.
\end{theorem1*}

Taking into account the exhaustive computer search performed by
Brinkmann et al. \cite{BGHM} mentioned earlier,
Theorem~\ref{thm:intro1} leaves the existence of a cyclically
$4$-edge-connected cubic graph $G$  of girth at least $5$ on
$n$ vertices with $\pi(G)\ge 5$ open only for $n=38$ and
$n=40$. This is a significant progress as all previously known
constructions combined are capable of producing only graphs of
order $n=12m-2$, where $m\ge 3$ (see \cite{AKLM, Chen, EM}).

This paper is organised as follows. The next section contains a
brief survey of terminology and notation used later in the
paper. Section~\ref{sec:char} presents a characterisation of
cubic graphs that cannot be covered with four perfect matching
in terms of flows with additional geometric structure within
the projective space $PG(3,2)$. Geometric language is further
developed in Section~\ref{sec:objects} with focus on objects
consisting of two or three points. Transformation of geometric
objects via the flows corresponding to coverings with four
perfect matchings is studied in the next three sections. In
Section~\ref{sec:Halin}, the accumulated knowledge is applied to
proving that all Halin snarks have perfect matching index at
least $5$ and that they are nontrivial snarks whenever the
building blocks are taken from cyclically $4$-edge-connected
graphs with girth at least $5$. The next section deals with
circular flow number of Halin snarks, which is shown to be at
least $5$ whenever the building have this property. A few final
remarks conclude the paper.

The present paper serves as an introduction to the topic and
its results will be extensively used in our subsequent papers.

\section{Preliminaries}\label{sec:prelim}

All graphs in this paper are finite and for the most part
simple and cubic (3-valent). Multiple edges and loops may
occur, but they will usually be excluded by imposing additional
restrictions.

Graphs can be assembled from smaller building blocks called
multipoles. Similarly to graphs, each \textit{multipole} $M$
has its vertex set $V(M)$, its edge set $E(M)$, and an
incidence relation between vertices and edges. Each edge of $M$
has two ends, and each end may, but need not be, incident with
a vertex of $M$. An edge that is not incident with a vertex is
called a \emph{dangling edge}; its free end is called a
\textit{semiedge}. A multipole with $k$ semiedges is called a
\textit{$k$-pole}. Any two edges $s$ and $t$, each with a free
end, can be coalesced into a new edge $s*t$, the
\textit{junction} of $s$ and $t$, by identifying a free end of
$s$ with a free end of $t$; the incidences of $s*t$ are
naturally inherited from $s$ and $t$.

Semiedges in multipoles are often grouped into pairwise
disjoint sets, called \textit{connectors}. A semiedge not
occurring in any connector is said to be \textit{residual}.  In
this paper, most multipoles will be dipoles. A \textit{dipole}
is a multipole with two connectors referred to as the
\textit{input connector} and the \textit{output connector}. A
dipole $D(I,O)$ with input connector $I$ of size $a$, output
connector $O$ of size $b$, and with $c$ residual semiedges,
will be called an \textit{$(a,b;c)$-pole}. If $c=0$, we speak
of an $(a,b)$-pole. For an $(a,a)$-pole $D(I,O)$ we define its
\textit{closure} $[D]$ to be a graph obtained from $D$ by
ordering the semiedges in both connectors and performing the
junction of the $i$-th semiedge of the output connector to the
$i$-th semiedge of the input connector.

A common way of constructing multipoles is by removing vertices
from a graph. The following multipoles obtained from a cubic
graph $G$ will be repeatedly used throughout the paper: Let
$v$, $uv$, and $uwv$ be paths in $G$ of length 0,~1, and 2, respectively. 
Denote by $G_v$, $G_{uv}$, and $G_{uwv}$, respectively,
the $3$-pole, the $(2,2)$-pole, and the $(2,2;1)$-pole obtained
from $G$ by removing the corresponding path and arranging the
resulting semiedges in such a way that the edges formerly
incident with the same vertex belong to the same connector. To
be more precise, in both $G_{uv}$ and $G_{uwv}$ the semiedges
formerly incident with $u$ will be assigned to the input
connector, those formerly incident with $v$ will be assigned to
the output connector, and the semiedge incident with $w$ will
be residual. The semiedges of $G_v$ will all belong to its
single connector.

An \textit{edge colouring} of a graph or a multipole $X$ is an
assignment of colours from a set $Z$ of \textit{colours} to the
edges of $X$ in such a way that the edges with adjacent edge
ends receive distinct colours. It means that all edge
colourings in this paper are proper. A
\textit{$k$-edge-colouring} is one where $|Z|=k$ colours. A
$3$-edge-colouring is also known as a \textit{Tait colouring}.
A $2$-connected cubic graph with no $3$-edge-colouring is
called a \textit{snark}.  A snark is \textit{nontrivial} if it
is cyclically $4$-edge-connected and has girth at least $5$.

A natural generalisation of a Tait colouring is the concept of
a \textit{local Tait colouring} of a cubic graph proposed by
Archdeacon \cite{A} and developed in \cite{HS, KMS, MS-Fano},
and elsewhere. It allows for an unlimited number of colours but
requires that the colours of any two edges meeting at a vertex
always determine the same third colour. Local Tait colourings
can be conveniently described in terms of colourings by points
of a partial Steiner triple system such that the colours
meeting at any vertex form a triple of the system; for details
see, for example, \cite{KMS}.

Local Tait colourings that occur in the present paper are at
the same time nowhere-zero flows. The pertinent definitions are
therefore in order. Given an abelian group $A$, an
\textit{$A$-flow} on a graph $G$ consists of an orientation of
$G$ and a function $\phi\colon E(G)\to A$ such that, at each
vertex, the sum of all incoming values equals the sum of all
outgoing ones (\textit{Kirchhoff's law}). A flow which only
uses non-zero elements of the group is said to be
\textit{nowhere-zero}. For the existence of an $A$-flow on $G$
the choice of the edge directions is immaterial for one can
reverse the orientation of any edge and replace the value on it
by its negative without violating the Kirchhoff law.
Furthermore, if $x=-x$ for every $x\in A$, then orientation can
be ignored altogether. This is the case of local Tait
colourings encountered in this paper.

\section{Covering cubic graphs with four perfect
matchings}\label{sec:char}

The aim of this section is to translate the problem of covering
a cubic graph with four perfect matchings into the language of
flows whose values and flow patterns around vertices are
restricted to points and lines of a tetrahedron in the
$3$-dimensional projective space over the $2$-element field
$\mathbb{F}_2$. This new language will be crucial for the
remainder of the paper. We start with the necessary geometric
background.

The \textit{$n$-dimensional projective space}
$\mathbb{P}_n(\mathbb{F}_2)=PG(n,2)$ over the $2$-element field
$\mathbb{F}_2$ is an incidence geometry whose \textit{points}
can be identified with the nonzero vectors of the
$(n+1)$-dimensional vector space $\mathbb{F}_2^{n+1}$ and
\textit{lines} are formed by the triples $\{x,y,z\}$ of points
such that $x+y+z=0$. The
$2$-dimensional projective space $PG(2,2)$ is the well-known
Fano plane which has $7$ points and $7$ lines. The
$3$-dimensional projective space $PG(3,2)$ has $15$ points and
$35$ lines.

An automorphism of $PG(n,2)$ is called a \textit{collineation}.
It maps lines to lines and hence collinear points to collinear
points. It is well known \cite{Coxeter} that each collineation
of $PG(n,2)$ is induced by a bijective linear transformation
$\mathbb{F}_2^{n+1}\to \mathbb{F}_2^{n+1}$.

A \textit{tetrahedron} in $PG(3,2)$ is a configuration $T$
consisting of ten points and six lines spanned by a set of four
points of $PG(3,2)$ in general position; it means that no three
of the points lie on the same line. Two points of $T$ that lie
on the same line of $T$ are said to be \textit{collinear}
in~$T$, otherwise they are \textit{non-collinear} in $T$.

Consider a fixed tetrahedron $T=T(p_1,p_2,p_3,p_3)$ spanned by
points $p_1$, $p_2$, $p_3$, and $p_4$ in general position;
these four points are the \textit{corner points} of $T$. With
every pair $\{c_1,c_2\}$ of distinct corner points $T$ contains
the entire line $\{c_1,c_1+c_2, c_2\}$. The point $c_1+c_2$ is
the \textit{midpoint} of the line $\{c_1,c_1+c_2, c_2\}$.
Clearly, there are six midpoints in $T$. Thus, in total, $T$
has four corner points and six midpoints arranged in six lines
(see Figure~\ref{fig:tetra}). Observe that each line
$\ell=\{c_1,c_1+c_2, c_2\}$ of $T$ is uniquely determined by
either its two corner points or by its midpoint; accordingly,
$\ell$ will be denoted by $\langle c_1,c_2\rangle$ or by
$\langle c_1+c_2\rangle$.

There are exactly five points of $PG(3,2)$ that are not
included in $T$. In order to describe them in geometric terms
recall that four points $x_1$, $x_2$, $x_3$, and $x_4$ of
$PG(3,2)$ are in general position if and only if they form a
basis of the vector space $\mathbb{F}_2^4$. Every vector
$y\in\mathbb{F}_2^4$ can therefore be uniquely expressed as a
linear combination
$$y=\alpha_1p_1+\alpha_2p_2+\alpha_3p_3+\alpha_4p_4$$
where the coefficients $\alpha_i$ are from
$\mathbb{F}_2=\{0,1\}$. The number of nonzero coefficients in
this expression is the \textit{weight} of $y$ with respect to
$T$, and will be denoted by $|y|_T$. The subscript $T$ will be
dropped whenever $T$ is clear from the context. We emphasise
that the weight $|y|_T$ coincides with the Hamming weight only
when $T$ is spanned by the unit vectors of~$\mathbb{F}_2$.

According to our definition of weight, the zero vector has
weight $0$, the corner points of $T$ have weight $1$, and
midpoints have weight $2$. The remaining five points of
$PG(3,2)$ have weight $3$ and $4$ and can be characterised as
follows. In $T$, consider a \textit{triangle} $t$, by which we
mean a set consisting of six points arranged in three lines
spanned by three distinct corner points $c_1$, $c_2$, and
$c_3$; we denote the triangle $t$ by $\langle
c_1,c_2,c_3\rangle$. The point $c_1+c_2+c_3$ of $PG(3,2)$,
which obviously does not belong to $T$, can be regarded as the
\textit{centre} of $t$. There are four triangles in $T$; their
centres provide four of the five points of $PG(3,2)$ missing
in~$T$. The last missing point is $p_1+p_2+p_3+p_4$, the
\textit{barycentre} of the entire~$T$.

Let us consider an arbitrary covering $\mathcal{M}$ of a cubic
graph $G$ with four perfect matchings $M_1$, $M_2$, $M_3$,
and~$M_4$, not necessarily distinct; thus $E(G)=M_1\cup M_2\cup
M_3\cup M_4$. Every vertex $v$ of $G$ is incident with all the
members of $\mathcal{M}$ and each edge incident with $v$
belongs to a member of $\mathcal{M}$. It follows that one of
the edges at $v$ is covered by two members of $\mathcal{M}$
while the remaining two are covered by a single member of
$\mathcal{M}$. If we label each edge $e$ with the binary vector
$(x_1,x_2,x_3,x_4)$ where $x_i=0$ if and only if $e$ belongs to
$M_i$ we obtain a mapping
\begin{equation}\label{eq:cover2flow}
\psi=\psi_{\mathcal{M}}\colon E(G)\to \mathbb{Z}_2^4.
\end{equation}
This mapping is easily seen to be a proper edge-colouring of
$G$, in fact, a local Tait colouring in the sense of
\cite{KMS}. Even more, it is a nowhere-zero
$\mathbb{Z}_2^4$-flow because for each $i\in\{1,2,3,4\}$ the
$i$-th coordinate mapping $$e\mapsto\psi(e)_i$$ coincides with
the characteristic function of a cycle, the $2$-factor
complementary to $M_i$, and hence is a flow.

The flow $\psi_{\mathcal{M}}$ has an additional geometric
structure evinced by the fact that the set of values of $\phi$
forms the tetrahedron spanned by the points $(0,1,1,1)$,
$(1,0,1,1)$, $(1,1,0,1)$, and $(1,1,1,0)$ (representing the
perfect matchings $M_1$, $M_2$, $M_3$, and~$M_4$, respectively)
and the flow values around each vertex form a line of the
tetrahedron. Such a flow will be called \textit{tetrahedral}.
To be more precise, let $\phi\colon E(G)\to \mathbb{Z}_2^4$ an
edge-valuation of a cubic graph $G$ and let
$T=T(p_1,p_2,p_3,p_4)$ be a tetrahedron in $PG(3,2)$. We say
that $\phi$ is a \textit{$T$-flow} if, for each edge $e$ of
$G$, the value $\phi(e)$ is a point of $T$, and for each vertex
$v$ of $G$, the set $\{\phi(e_1),\phi(e_2),\phi(e_3)\}$, where
$e_1$, $e_2$, and $e_3$ are the edges incident with~$v$, is a
line of~$T$. If the particular tetrahedron $T$ is irrelevant,
we just say that $\phi$ is a \textit{tetrahedral flow}.  Note
that any tetrahedral flow is a nowhere-zero flow because any
three points constituting a line of $T$ sum to zero and the
value $\phi(e)=0$ doe not occur. Since $\phi$ is also a proper
edge-colouring, it will sometimes be referred to as a
\textit{tetrahedral colouring} (or a \textit{$T$-colouring}
whenever a tetrahedron $T$ is specific) and the values
$\phi(e)$ as \textit{colours}. It may be worth mentioning that
a tetrahedral flow is also an instance of a $B$-flow in the
sense of Jaeger \cite[p.~73]{Jaeger}, where $B$ is a subset of
an abelian group $A$ not containing zero such that $B=-B$.
However, it is endowed with an additional geometric structure.

\medskip

With this preparation we can proceed to the main
result of this section.

\begin{theorem}\label{thm:1-1}
A cubic graph can have its edges covered with four perfect
matchings if and only if it admits a tetrahedral flow.
Moreover, there exists a one-to-one correspondence between
coverings of $G$ with four perfect matchings and $T$-flows,
where $T$ is an arbitrary fixed tetrahedron in $PG(3,2)$.
\end{theorem}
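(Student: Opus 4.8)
The plan is to establish both claims at once by producing two explicit, mutually inverse maps between four-matching covers and tetrahedral flows. The forward map is the assignment $\mathcal{M}\mapsto\psi_{\mathcal{M}}$ already introduced before the theorem, which lands in the particular tetrahedron $T_0$ spanned by the points $(0,1,1,1)$, $(1,0,1,1)$, $(1,1,0,1)$, $(1,1,1,0)$ that represent $M_1,M_2,M_3,M_4$. I would therefore first prove the bijection for this canonical $T_0$, whose corners $p_1,\dots,p_4$ are labelled by the four matchings, and only at the very end transfer it to an arbitrary fixed tetrahedron $T$, using that any two tetrahedra are related by a collineation.

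The forward direction is essentially contained in the paragraph preceding the theorem; I would only make the underlying combinatorics explicit. At a vertex $v$ each of the four perfect matchings covers exactly one of the three incident edges, giving four matching-incidences spread over three edges; since $\mathcal{M}$ is a cover, every edge receives at least one, so by pigeonhole the pattern is $(2,1,1)$. Hence exactly one edge at $v$ lies in two members of $\mathcal{M}$, say $M_i$ and $M_j$, and the other two lie in $M_k$ and $M_l$, where $\{i,j,k,l\}=\{1,2,3,4\}$. Reading off the labels shows that these three edges receive the midpoint $p_k+p_l$ and the two corners $p_k$, $p_l$, that is, exactly the line $\langle p_k,p_l\rangle$ of $T_0$; this is what makes $\psi_{\mathcal{M}}$ a $T_0$-flow.

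For the converse I would start from an arbitrary $T_0$-flow $\phi$ and set $M_i=\{e\in E(G):(\phi(e))_i=0\}$, the edges whose $i$-th coordinate vanishes. Covering is immediate: every point of $T_0$ is a corner (one zero coordinate) or a midpoint (two zero coordinates), so each edge lies in some $M_i$. The crux is that each $M_i$ is a perfect matching, and here I would use the line condition at a vertex $v$, where the three labels form $\{p_a,\,p_a+p_b,\,p_b\}$. A short case analysis of the $i$-th coordinate shows it vanishes on exactly one of these three points in every case: on $p_a$ when $i=a$, on $p_b$ when $i=b$, and on the midpoint $p_a+p_b$ when $i\notin\{a,b\}$. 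Thus exactly one edge at each vertex belongs to $M_i$, so $M_i$ meets every vertex exactly once and is a perfect matching. This coordinate bookkeeping is the one place that must be checked carefully; everything else is formal.

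It then remains to see that the two maps invert each other and to pass to a general $T$. Because $(\psi_{\mathcal{M}}(e))_i=0$ holds precisely when $e\in M_i$, feeding $\psi_{\mathcal{M}}$ into the backward map returns each $M_i$ unchanged, while feeding $\phi$ into the forward map reproduces $\phi$ coordinate by coordinate; hence the two assignments are inverse bijections between four-matching covers and $T_0$-flows. Finally, any four points in general position form a basis of $\mathbb{F}_2^4$, so there is a collineation $\gamma$ with $\gamma(T_0)=T$; since collineations are linear bijections carrying points to points and lines to lines, and hence preserving Kirchhoff's law and the nowhere-zero property, the assignment $\phi\mapsto\gamma\circ\phi$ is a bijection from $T_0$-flows onto $T$-flows. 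Composing it with the bijection already obtained yields the desired one-to-one correspondence for the arbitrary fixed tetrahedron $T$.
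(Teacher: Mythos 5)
Your proposal is correct and follows essentially the same route as the paper: establish the bijection explicitly for the canonical tetrahedron on $(0,1,1,1),\dots,(1,1,1,0)$ (which the paper calls $T_1$) via the maps $\mathcal{M}\mapsto\psi_{\mathcal{M}}$ and $\phi\mapsto\{M_i\}$, check they are mutually inverse, and then transfer to an arbitrary $T$ by the collineation induced by a change of basis. Your coordinate bookkeeping correctly fills in the steps the paper labels ``not difficult to see.''
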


\begin{proof}
It suffices to prove the second statement. We first do it for
the tetrahedron $T_1$ spanned by the points $(0,1,1,1)$,
$(1,0,1,1)$, $(1,1,0,1)$, and $(1,1,1,0)$, and then supply a
general argument.

As mentioned in the preceding paragraphs, given a covering
$\mathcal{M}=\{M_1,M_2,M_3,M_4\}$ of $G$ with four perfect
matchings, the mapping $\psi_{\mathcal{M}}$ defined by
(\ref{eq:cover2flow}) is a $T_1$-flow. For the converse, let
$\phi$ be an arbitrary $T_1$-flow on $G$. For each
$i\in\{1,2,3,4\}$ define $N_i$ to be the set of all edges where
the $i$-coordinate of the point $\psi(e)$ equals $0$. Taking
into account the structure of lines in $T_1$ it is not
difficult to see that each $N_i$ is a perfect matching of $G$
and that $\mathcal{N}=\{N_1,N_2,N_3,N_4\}$ covers all the edges
of $G$. Thus every $T_1$-flow $\phi$ on $G$ determines a
covering $\mathcal{N}=\mathcal{N}_{\phi}$ of $G$ with four
perfect matchings. Furthermore, if we start from a $T_1$-flow
$\phi$ on $G$, construct the corresponding covering
$\mathcal{}=\mathcal{N}_{\phi}$, and then derive the $T_1$-flow
$\psi_{\mathcal{N}}$ from it, we can easily check that
$\psi_{\mathcal{N}}=\phi$. Similarly, if we start with a
covering $\mathcal{M}$, derive $\psi=\psi_{\mathcal{M}}$, and
then $\mathcal{N}_{\psi}$, we can conclude that
$\mathcal{N}_{\psi}=\mathcal{M}$. This means that we have
established a one-to-one correspondence between coverings of
$G$ with four perfect matchings and $T_1$-flows.

Now let $T=T(p_1,p_2,p_3,p_4)$ be an arbitrary tetrahedron in
$PG(3,2)$ with corner points $p_1$, $p_2$, $p_3$, and $p_4$.
Since the set $\{p_1,p_2,p_3,p_4\}$ forms a basis of
$\mathbb{F}_2^4$, there exists a linear transformation $\Theta$
of $\mathbb{F}_2^4$ which takes the basis $\{p_1,p_2,p_3,p_4\}$
to the basis $\{(0,1,1,1), (1,0,1,1), \penalty0 (1,1,0,1),
\penalty0 (1,1,1,0)\}$. This mapping induces a collineation of
$PG(3,2)$, which means that $\Theta$ takes a line to a line
and, consequently, transforms $T$ into~$T_1$. In particular, if
$\psi$ is a $T$-flow on $G$, then $\Theta\psi$ is a $T_1$-flow,
and if $\phi$ is a $T_1$-flow, then $\Theta\inv\phi$ is a
$T$-flow. This allows us to conclude that the assignment
$\psi\mapsto\Theta\psi$ establishes a one-to-one correspondence
between $T$-flows and $T_1$-flows on $G$. Combining this
correspondence with the one-to-one correspondence between
$T_1$-flows on $G$ and coverings of $G$ with four perfect
matchings we obtain the desired result.
\end{proof}

The significance of Theorem~\ref{thm:1-1} resides in the fact
that it enables us to move freely between the coverings of
cubic graphs with four perfect matchings and the tetrahedral
flows. With this correspondence in hand, we can replace
reasoning about $1$-factors and $2$-factors in cubic graphs
with algebraic calculus in the group $\mathbb{Z}_2^4$ combined
with the geometry of the projective space $PG(3,2)$ and then
translate the results back to graph structure.

\begin{remark}\label{rem:T0-T1}{\rm
We discuss the relationship between $T_1$-flows and $T$-flows
described in the second part of the previous proof for the
special case of the tetrahedron $T_0$ spanned by the points
$(1,0,0,0)$, $(0,1,0,0)$, $(0,0,1,0)$, and $(0,0,0,1)$. Observe
that both $T_0$ and $T_1$ have the same set of midpoints,
namely the points with exactly two coordinates equal to~$1$.
The linear transformation $\Lambda$ of $\mathbb{F}_2^4$
determined by the matrix
\begin{equation*}
A_{\Lambda} =
\begin{pmatrix}
0 & 1 & 1 & 1\\
1 & 0 & 1 & 1\\
1 & 1 & 0 & 1\\
1 & 1 & 1 & 0
\end{pmatrix}
\end{equation*}
swaps each unit vector of $\mathbb{F}_2^4$ with its
\textit{antipode}, the vector obtained by from it by
interchanging zeros with ones, while leaving the midpoints
fixed; in particular, $\Lambda^2=\mathrm{id}$. The
corresponding collineation maps $T_0$ isomorphically to $T_1$
and vice versa. Now, if $\phi$ is an arbitrary $T_i$-flow on a
cubic graph $G$ for $i\in\{0,1\}$, then the corresponding
$T_{1-i}$-flow $\Lambda\phi$ can be obtained simply by
replacing each flow value of weight $1$ with respect to $T_i$
with its antipode (which is of weight $1$ in the other
tetrahedron) while leaving the flow values of weight $2$
intact.

This correspondence has a useful consequence that if $\phi$ is
an arbitrary $T_0$-flow on a cubic graph $G$, then for each
$i\in\{1,2,3,4\}$ the set $ M_i=\{e\in E(G); \phi(e)_i=1\} $ is
a perfect matching of $G$ and $\mathcal{M}=\{M_1,M_2,M_3,M_4\}$
is a covering of $G$ with four perfect matchings. The
correspondence also works in the reverse direction. \qed
}\end{remark}

\section{Small geometric objects in $PG(3,2)$}
\label{sec:objects}

One way of applying Theorem~\ref{thm:1-1} to proving that a
cubic graph cannot be covered with four perfect matchings is by
analysing conflicting behaviour of tetrahedral flows on the
components resulting from the removal of an edge-cut from the
graph. The sets of flow values on the edges of the cut form
geometric objects in $PG(3,2)$ which in turn can be used to
describing the conflicts. In this section we introduce several
types of objects, mainly of size $2$, that will serve for this
purpose.

We say that two sets $A$ and $B$ of points of a tetrahedron $T$
in $PG(3,2)$ have the \textit{same shape} if there exists a
collineation of $PG(3,2)$ that preserves $T$ and takes $A$ to
$B$. A \textit{geometric shape} in $T$, or simply a
\textit{shape}, is an equivalence class of all point sets
having the same shape. The \textit{shape} of a set of points of
$T$ is a geometric shape it belongs to.

Let $\boldsymbol{\Pi}=\boldsymbol{\Pi}(T)$ be the set of all
pairs $\{p,q\}$ where $p$ and $q$ are points of $T$, not
necessarily distinct. 

We distinguish the following seven types of objects in
$\boldsymbol{\Pi}(T)$:
\begin{itemize}

\item[(i)] A \textit{line segment} is a pair $\{c_1,c_2\}$
    where $c_1$ and $c_2$ are any two distinct corner
    points of $T$. A line segment is a subset of a line
    consisting of its two corner points.  The set of all
    line segments of $T$ will be denoted by $\ls$. Clearly,
    there are six line segments in $T$.

\item[(ii)] A \textit{half-line} is a pair $\{c_1,c_1+c_2\}$
    where $c_1$ and $c_2$ are any two distinct corner
    points of $T$. A half-line is a subset of a line
    consisting of a corner point and a midpoint. It means
    that each line has two half-lines. The point $c_1$ it
    the \textit{origin} of the half-line, and $c_2$ is its
    \textit{target}. The set of all half-lines of $T$ will
    be denoted by $\hl$. There are twelve half-lines in
    $T$.

\item[(iii)] An \textit{angle} is a pair $\{c_1+c_2,
    c_1+c_3\}$ where $c_1$, $c_2$, and $c_3$ are any three
    distinct corner points of $T$. An angle consists of the
    midpoints of two intersecting lines of $T$. Their
    corner points form a triangle. Clearly, each triangle
    $\langle c_1,c_2,c_3\rangle$ in $T$ has three angles.
    On the other hand, each angle belongs to a unique
    triangle. The set of all angles of $T$ will be denoted
    by $\ang$. There are twelve distinct angles in $T$.

\item[(iv)] An \textit{altitude} is a pair
    $\{c_1,c_2+c_3\}$ where $c_1$, $c_2$, and $c_3$ are any
    three distinct corner points of $T$.  An altitude
    consist of a corner point of a triangle $t=\langle
    c_1,c_2,c_3\rangle$ and the midpoint of the line of $t$
    not incident with the chosen the corner point. Each
    triangle has three altitudes. The set of all altitudes
    of $T$ will be denoted by $\alt$. There are twelve
    altitudes in $T$.

\item[(v)] An \textit{axis} is a pair $\{c_1+c_2,c_3+c_4\}$
    where $c_1$, $c_2$, $c_3$, and $c_4$ are all four
    corner points of $T$ in some order. An axis consists of
    the midpoints of two skew (non-intersecting) lines of
    $T$. The set of all axes of $T$ will be denoted by
    $\ax$. Any tetrahedron $T$ in $PG(3,2)$ has three axes.
    The lines of $PG(3,2)$ spanned by the axes of $T$ all
    meet at the barycentre of $T$.

\item[(vi)] A \textit{double corner point} is a degenerate
    pair $\{c,c\}$ where $c$ is an arbitrary corner point
    of $T$. The set of all such pairs will be denoted by
    $\dc$.

\item[(vii)] A \textit{double midpoint} is pair $\{m,m\}$
    where $m$ is an arbitrary midpoint of $T$. The set of
    all such pairs  will be denoted by $\dm$.
\end{itemize}

The pairs under items (i)-(ii) are \textit{collinear}, those
under (iii)-(v) are \textit{non-collinear}, and the pairs under
items (vi)-(vii) are \textit{degenerate}. Clearly, for
a pair $\{p,q\}\in\boldsymbol{\Pi}(T)$ the sum $p+q$ is a point of $T$ if and
only if $\{p,q\}$ is collinear.

\medskip

We now show that the items (i)-(vii) represent all geometric
shapes of pairs of points of any tetrahedron in $PG(3,2)$.

\begin{proposition}\label{prop:all-shapes}
The set $\boldsymbol{\sigma}=\{\ls, \hl, \ang, \alt, \ax, \dc,
\dm\}$ provides a complete list of shapes of pairs of points of
an arbitrary tetrahedron in $PG(3,2)$.
\end{proposition}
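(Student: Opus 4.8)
The plan is to fix one convenient tetrahedron, determine its symmetry group inside the collineation group of $PG(3,2)$, and then simply compute the orbits of that group on $\boldsymbol{\Pi}(T)$. Since the argument in the proof of Theorem~\ref{thm:1-1} shows that any two tetrahedra are carried one onto the other by a collineation, it suffices to work with the single tetrahedron $T=T_0$ spanned by the unit vectors of $\mathbb{F}_2^4$, where the weight $|y|_T$ is the ordinary Hamming weight. First I would pin down the stabiliser of $T$ in the collineation group. A collineation fixing the $10$-point set $T$ must carry lines of $PG(3,2)$ lying inside $T$ to lines inside $T$, hence it permutes the six lines of $T$; since a corner point lies on three of these lines while a midpoint lies on only one, corners are sent to corners and midpoints to midpoints. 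As the four corners form a basis of $\mathbb{F}_2^4$, such a collineation is determined by the induced permutation of the corners, and conversely every permutation of the corner points extends (by permuting basis vectors) to a collineation preserving $T$. Thus the stabiliser of $T$ is exactly $S_4$, acting by permuting the corners $p_1,\dots,p_4$ and correspondingly the midpoints $p_i+p_j$.

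Next I would encode the points of $T$ combinatorially. Identify each corner $p_i$ with the singleton $\{i\}$ and each midpoint $p_i+p_j$ with the pair $\{i,j\}$, so that the points of $T$ correspond to the nonempty subsets of $\{1,2,3,4\}$ of size at most $2$. Under this identification a pair $\{p,q\}\in\boldsymbol{\Pi}(T)$ becomes an unordered pair $\{S,S'\}$ of such subsets, possibly with $S=S'$. Because the stabiliser $S_4$ acts as the full symmetric group on $\{1,2,3,4\}$, two such pairs lie in the same orbit precisely when they have the same combinatorial type, that is, the same unordered pair of cardinalities $\{|S|,|S'|\}$, the same intersection size $|S\cap S'|$, and agree on whether $S=S'$. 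The transitivity needed here is routine: the symmetric group is transitive on ordered tuples of distinct elements, which at once matches up any two configurations of a given type.

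It then remains to run through the possible types and read off the seven shapes. For $\{|S|,|S'|\}=\{1,1\}$ one gets the double corner points $\dc$ (when $S=S'$) and the line segments $\ls$ (when $S\ne S'$); for $\{1,2\}$ one gets the half-lines $\hl$ (when the corner lies in the midpoint's pair) and the altitudes $\alt$ (otherwise); for $\{2,2\}$ one gets the double midpoints $\dm$ (when $S=S'$), the angles $\ang$ (when $|S\cap S'|=1$), and the axes $\ax$ (when $S\cap S'=\emptyset$). This yields exactly seven orbits, which are the seven members of $\boldsymbol{\sigma}$; distinct types cannot share an orbit, so these shapes are pairwise different. As a final check I would confirm that the orbit sizes $4,6,12,12,6,12,3$ sum to $55=\binom{10}{2}+10$, the total number of pairs (distinct or degenerate) of the ten points, so that the seven shapes indeed exhaust $\boldsymbol{\Pi}(T)$.

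The step I expect to be the main obstacle is the first one: correctly identifying the stabiliser of $T$ as $S_4$ and making sure no ``extra'' collineation preserves the point set without being a corner permutation. Everything after that is bookkeeping, since once the group is known the classification is just the orbit count of $S_4$ acting on unordered pairs of small subsets of a $4$-set.
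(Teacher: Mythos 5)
Your overall strategy---reduce to one tetrahedron, compute its stabiliser in the collineation group, and classify the orbits on $\boldsymbol{\Pi}(T)$---is more explicit than the paper's own proof, which merely asserts that each of the seven classes lies in a single shape and that distinct classes have distinct shapes, and then closes the argument with the same count $55=\binom{10}{2}+10$. However, the step you yourself flag as the main obstacle contains a genuine error. It is not true that the only lines of $PG(3,2)$ contained in the ten-point set of $T$ are the six lines of $T$: the four \emph{circles} $\{c_1+c_2,c_2+c_3,c_3+c_1\}$ (see Lemma~\ref{lm:circle-line}) are also lines of $PG(3,2)$ lying entirely inside $T$, so a collineation that merely fixes the ten-point set permutes all ten of these lines and need not preserve the subset of six lines of $T$. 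In fact, the setwise stabiliser of the ten points equals the stabiliser of the complementary five points, which form a frame (any four of them are linearly independent), so that stabiliser is $S_5$ of order $120$, not $S_4$; the ten points together with all ten lines form a Desargues configuration on which $S_5$ mixes the lines of $T$ with the circles. Concretely, for $T_0$ the linear map fixing $(1,1,0,1)$, $(1,0,1,1)$, $(0,1,1,1)$ and swapping $(1,1,1,0)$ with $(1,1,1,1)$ preserves the point set of $T_0$ yet sends the corner $e_1$ to the midpoint $e_1+e_4$ and the line $\langle e_1,e_2\rangle$ to a circle; under this larger group your seven classes collapse to three (degenerate, intersecting, and disjoint pairs of $2$-subsets of a $5$-set), so with your reading of ``preserves $T$'' the proposition would be false.

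The statement survives because the paper defines $T$ as a \emph{configuration} of ten points \emph{and six lines}, so ``preserves $T$'' must be read as preserving both. With that reading the permutation of the six lines of $T$ is part of the hypothesis rather than a consequence of point-set invariance, your degree count (a corner lies on three lines of $T$, a midpoint on one) then correctly forces corners to corners, and the stabiliser is indeed $S_4$. Everything downstream in your proposal---the encoding of the points as subsets of $\{1,2,3,4\}$ of size at most two, the classification of orbits by intersection type, and the verification that the orbit sizes $4,6,12,12,6,12,3$ sum to $55$---is correct. So the argument is repairable by a one-sentence change of hypothesis, but as written the justification of the key step is invalid.
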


\begin{proof}
It is not difficult to see that each of the sets $\ls$, $\hl$,
$\ang$, $\alt$, $\ax$, $\dc$, and $\dm$ contains pairs of the
same shape. On the other hand, different members of
$\boldsymbol\sigma$ consist of pairs of different shape. Since
$$ |\ls|+|\hl|+|\ang|+|\alt|+|\ax|+|\dc|+|\dm|
=55=\binom{10}{2}+10=|\boldsymbol{\Pi}(T)|,$$ each pair of
points of $T$ belongs to precisely one element of
$\boldsymbol{\sigma}$. This implies that the set
$\boldsymbol{\sigma}$ is the complete set of shapes of pairs of
points of $T$.
\end{proof}

From among the shapes of $3$-element point sets only two will
be important for us -- lines (of course) and circles. We define
a \textit{circle} in a tetrahedron $T$ as any subset
$\{m_1,m_2,m_3\}$ of $T$ which consists of three distinct
midpoints sharing a triangle. It means that there exist three
corner points $c_1$, $c_2$, and $c_3$ of $T$ such that
$m_1=c_1+c_2$, $m_2=c_2+c_3$, and $m_3=c_3+c_1$. In particular,
$m_1+m_2+m_3=0$, so $\{m_1,m_2,m_3\}$ is a line of $PG(3,2)$.
Clearly, all circles have the same shape, but the shape of a
circle in $T$ is different from that of a line in $T$.

\begin{lemma}\label{lm:circle-line}
For arbitrary points $x$, $y$, and $z$ of a tetrahedron
$T$ in $PG(3,2)$, the equality $x+y+z=0$ holds if and only
if $\{x,y,z\}$ is a line or a circle of $T$.
\end{lemma}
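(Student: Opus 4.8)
The plan is to prove the two implications separately, with the forward direction carrying all the content. The reverse direction is immediate: if $\{x,y,z\}$ is a line $\langle c_1,c_2\rangle=\{c_1,c_1+c_2,c_2\}$, then $c_1+(c_1+c_2)+c_2=0$ because we work over $\mathbb{F}_2$; and if $\{x,y,z\}$ is a circle with $x=c_1+c_2$, $y=c_2+c_3$, $z=c_3+c_1$, then the sum telescopes to $2(c_1+c_2+c_3)=0$, an equality already recorded in the definition of a circle.

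For the forward direction, suppose $x,y,z$ are points of $T$ with $x+y+z=0$. First I would note that these three points are automatically distinct: if, say, $x=y$, then $z=x+y=0$, which is not a point of $PG(3,2)$. Next I would exploit weight parity. Every point of $T$ is either a corner point (weight $1$) or a midpoint (weight $2$) with respect to $T$. Consider the linear functional $\epsilon\colon\mathbb{F}_2^4\to\mathbb{F}_2$ sending each vector to the sum of its coordinates in the basis $\{p_1,p_2,p_3,p_4\}$; thus $\epsilon$ returns the parity of the $T$-weight, taking the value $1$ on corners and $0$ on midpoints. Applying $\epsilon$ to $x+y+z=0$ shows that the number of corner points among $x,y,z$ is even, hence $0$ or $2$. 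This at once eliminates the mixed possibilities (one corner with two midpoints, or three corners) and leaves exactly two cases to treat.

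In the case of two corner points $c_1,c_2$ and one midpoint, the relation $x+y+z=0$ forces the midpoint to equal $c_1+c_2$; hence $\{x,y,z\}=\{c_1,c_1+c_2,c_2\}$ is precisely the line $\langle c_1,c_2\rangle$ of $T$. The remaining case, in which all three points are distinct midpoints, is where I expect the main obstacle to lie. Here I would identify each midpoint with the $2$-element set of corner points it joins, and count, for each corner $p_i$, how many of the three midpoints contain $p_i$. The condition $x+y+z=0$ means every coordinate is covered an even number of times. Since three distinct midpoints contribute exactly six incidences distributed among four coordinates, and since no coordinate can be covered more than three times, each coordinate is covered $0$ or $2$ times; with total six over four coordinates, the only admissible distribution is three corners covered twice each and the fourth covered zero times.

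Consequently the three midpoints are exactly the midpoints $c_a+c_b$, $c_b+c_c$, $c_c+c_a$ of the triangle on the three covered corners, which is by definition a circle. Combining the two cases completes the forward implication and hence the proof.
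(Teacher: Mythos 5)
Your proof is correct, but it takes a genuinely different route from the paper's. The paper's converse argument is built on the classification of two-point shapes from Proposition~\ref{prop:all-shapes}: it picks two of the three points, and if they are collinear the set is a line, while if they are non-collinear the pair is an angle, an axis, or an altitude; in the axis and altitude cases the sum $x+y$ has weight $3$ or $4$ and so is not a point of $T$, which forces the angle case and hence a circle. Your argument bypasses that taxonomy entirely: you first use the parity functional $\epsilon$ (the mod-$2$ weight with respect to $T$) to show the number of corner points among $x,y,z$ is $0$ or $2$, dispose of the two-corner case directly, and then settle the three-midpoint case by an incidence count (six incidences over four coordinates, each coordinate covered an even number of times and at most three times, forcing the distribution $2+2+2+0$). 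Both arguments are short and complete; the paper's buys brevity by reusing the shape classification already established in Section~\ref{sec:objects}, whereas yours is self-contained and makes the underlying coding-theoretic structure (even-weight constraints on supports) explicit, which is arguably more transparent for a reader who has not internalised the list of pair shapes. One stylistic remark: your observation that the three points are automatically distinct mirrors the paper's, and your two-corner case implicitly uses that distinctness to guarantee $c_1\ne c_2$, which you should perhaps state when writing it up.
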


\begin{proof}
If $\{x,y,z\}$ is a line of $T$ or a circle of $T$, then indeed
$x+y+z=0$. For the converse let $x$, $y$, and $z$ be arbitrary
points of $T$ such that $x+y+z=0$. Since all of them are
non-zero vectors, they must be pairwise distinct. Consider any
two of them, say $x$ and $y$. If they are collinear, then
$\{x,y,z\}$ is clearly a line.  If they are non-collinear, then
$\{x,y\}$ is either an angle, an axis, or an altitude. In the
latter two cases, $z=x+y$ is not a point of $T$. If $\{x,y\}$
is an angle, then $z$ is the third midpoint of the triangle
spanned by the corner points of the lines $\langle x\rangle$
and $\langle y\rangle$. In other words, $\{x,y,z\}$ is a circle
of $T$.
\end{proof}

\section{Transitions through (2,2)-poles}\label{sec:trans}

The next two sections provide a background material for the
study of conflicts of tetrahedral flows on edge-cuts of size
$4$ and $5$. For this purpose it is convenient to partition the
semiedges of $4$-poles and $5$-poles arising from the removal
of these cuts into two connectors of size $2$ and, in the
latter case, one additional residual semiedge. After choosing
an input connector we can follow how input values of a
tetrahedral flow transform into output values, and from this
information we can derive a transition relation for a dipole.

Consider an arbitrary (2,2)-pole $X=X(I,O)$ with input
connector $I=\{g_1,g_2\}$ and output connector $O=\{h_1,h_2\}$,
and let $T$ be a fixed tetrahedron in $PG(3,2)$. We say that
$X$ \textit{has a transition}
$$\{x,y\}\to\{x',y'\}$$
or that $\{x,y\}\to\{x',y'\}$ is a \textit{transition through}
$X$, if there exists a $T$-flow $\phi$ on $X$ such that
$\{\phi(g_1),\phi(g_2)\}=\{x,y\}$ and
$\{\phi(h_1),\phi(h_2)\}=\{x',y'\}$. If $X$ admits both
transitions $\{x,y\}\to\{x',y'\}$ and $\{x',y'\}\to\{x,y\}$, we
write
$$\{x,y\}\oto\{x',y'\}.$$
The set of all transitions through $X$ forms a binary relation
$\T_{\boldsymbol{\Pi}}(X)$ on the set
$\boldsymbol{\Pi}=\boldsymbol{\Pi}(T)$ of point pairs of $T$.

For convenience, we often refer to the symbol
$\{x,y\}\to\{x',y'\}$ as a \textit{transition} even without a
reference to a particular $(2,2)$-pole, as opposed to a
transition \textit{through} a $(2,2)$-pole defined above. There
is no danger of confusion.

From the Kirchhoff law we deduce that for each transition
$\{x,y\}\to\{x',y'\}$ through a $(2,2)$-pole we have
$x+y=x'+y'$. This value will be called the \textit{trace} of
the transition. A transition whose trace is $0$ is said to be
\textit{vanishing}. A non-vanishing transition
$\{x,y\}\to\{x',y'\}$ is \textit{collinear} if both pairs
$\{x,y\}$ and $\{x',y'\}$ are collinear, that is, if there
exist lines $\ell$ and $\ell'$ of $T$ such that
$\{x,y\}\subseteq\ell$ and $\{x',y'\}\subseteq\ell'$.

Each transition $\{x,y\}\to\{x',y'\}$ between pairs of points
induces the transition between their shapes. In this way we
obtain a \textit{transition} $\mathtt{s}\to\mathtt{t}$, where
$\mathtt{s}$ is the shape of $\{x,y\}$, $\mathtt{t}$ is the
shape of $\{x',y'\}$, and $\mathtt{s}, \mathtt{t}\in\{\ls, \hl,
\ang, \alt, \ax, \dc, \dm\} = \boldsymbol{\sigma}$. Since our
transitions are derived from flows, in most cases there is no
need to distinguish between the shapes $\dc$ and $\dm$ as both
of them represent the zero total flow through a connector. This
permits us to merge the shapes $\dc$ and $\dm$ into a single
shape $\dc\cup\dm$, which we denote by $\dpt$ and call the
\textit{double point}. Accordingly, we obtain the
\textit{merged set of shapes}
$$\boldsymbol{\Sigma}=\{\ls, \hl, \ang, \alt, \ax, \dpt \}$$
and denote the corresponding induced transition relation by
$\T_{\boldsymbol{\Sigma}}(X)$ or simply by $\T(X)$. In what
follows, this transition relation will become one of our main
tools for the investigation of cubic graphs that cannot be
covered with four perfect matchings.

\medskip

Before stating our next theorem we need one more definition. We
say that a transition $\{x,y\}\to\{x',y'\}$ is
\textit{stationary} if $\{x,y\}$ and $\{x',y'\}$ have the same
shape in $\Sh$. If all transitions through a $(2,2)$-pole $X$
are stationary, then $X$ itself is called \textit{stationary}.

\begin{theorem}\label{thm:admiss-trans}
All transitions through an arbitrary $(2,2)$-pole $X$ are
stationary except possibly those of the form $\ls\to\ang$ or
$\ang\to\ls$. In particular, all transitions involving a
half-line, an altitude, an axis, or a double point must have
one of the following forms:
$$ \hl\to\hl, \quad \alt\to\alt,
\quad \ax\to\ax,\quad \text{and} \quad \dpt\to\dpt. $$
\end{theorem}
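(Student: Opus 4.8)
The plan is to exploit the one invariant that no $(2,2)$-pole can alter: the trace $t=x+y=x'+y'$, which is preserved across every transition by the Kirchhoff law. Since $t$ is literally the same point of $PG(3,2)$ on both sides of a transition, its weight $|t|_T$ is an invariant of every transition through $X$. I would therefore reduce the theorem to the purely geometric statement that the shape of a pair of points of $T$ is almost entirely determined by the weight of its sum, and that the sole exception occurs exactly between $\ls$ and $\ang$.

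First I would tabulate, for each of the seven shape types, the weight of the sum $p+q$ of the pair. Using that every point of $T$ has weight $1$ (the corners) or $2$ (the midpoints), a short computation gives $|p+q|_T=0$ for a double point, $1$ for a half-line (indeed $c_1+(c_1+c_2)=c_2$ is a corner), $2$ for a line segment (the sum $c_1+c_2$ is a midpoint) and equally for an angle (the sum $(c_1+c_2)+(c_1+c_3)=c_2+c_3$ is again a midpoint), $3$ for an altitude (the sum $c_1+c_2+c_3$ is the centre of a triangle), and $4$ for an axis (the barycentre of $T$).

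Next I would establish the converse, namely that the weight of the sum pins down the shape outside of the single coincidence at weight $2$. Running through the possible unordered pairs of points of $T$ -- corner/corner, corner/midpoint (split according to whether the corner lies on the midpoint's line), midpoint/midpoint (split according to whether the two midpoints share a corner), together with the two degenerate pairs -- one checks that weight $0$ forces a double point, weight $1$ forces a half-line, weight $3$ forces an altitude, and weight $4$ forces an axis, while weight $2$ is realised by exactly the two shapes $\ls$ and $\ang$. This case analysis is the only step that requires genuine work, and it is the crux of the argument; everything else is bookkeeping around the invariance of the trace.

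Finally I would assemble the pieces. Given any transition $\{x,y\}\to\{x',y'\}$ through $X$, the two pairs have equal sums and hence equal sum-weights; by the converse just established, pairs of equal sum-weight share the same shape unless that weight is $2$, in which case each pair is independently either a line segment or an angle. Consequently the transition is stationary unless it has the form $\ls\to\ang$ or $\ang\to\ls$, which is the first assertion. The displayed list then follows at once: a half-line, an altitude, an axis, and a double point are the unique shapes of sum-weight $1$, $3$, $4$, and $0$ respectively, so a transition out of any one of them must return the same shape, yielding $\hl\to\hl$, $\alt\to\alt$, $\ax\to\ax$, and $\dpt\to\dpt$.
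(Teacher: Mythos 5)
Your proposal is correct and follows essentially the same route as the paper: both arguments rest on the Kirchhoff-law invariance of the trace $x+y=x'+y'$, tabulate the weight $|p+q|_T$ for each of the seven shapes, and observe that weight $2$ is the only value realised by two distinct shapes ($\ls$ and $\ang$), so all other transitions are forced to be stationary. Your explicit verification of the converse direction (that the sum-weight pins down the shape outside weight $2$) is slightly more detailed than the paper's, which asserts it case by case, but the substance is identical.
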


\begin{proof}
Consider an arbitrary transition $\{x,y\}\to\{x',y'\}$ through
a $(2,2)$-pole $X$, and let $\phi$ be a $T$-flow on $X$ that
induces it. Kirchhoff's law implies that $x+y=x'+y'$, which
means that $x+y$ and $x'+y'$ must have the same weight. If
$|x+y|=2$, then each of $\{x,y\}$ and $\{x',y'\}$ is either an
angle or a line segment. Thus if the transition
$\{x,y\}\to\{x',y'\}$ with $|x+y|=2$ is not stationary, then it
has the form $\ang\to\ls$ or $\ls\to\ang$.

It remains to verify that all the remaining transitions through
$X$ are stationary. If $|x+y|=0$, then $x=y$  and $x'=y'$, and
we obtain the transition $\dpt\to\dpt$. If $|x+y|=1$, then
$x+y=x'+y'$ is a corner point, and therefore both $\{x,y\}$ and
$\{x',y'\}$ must be half-lines. Similarly, if $|x+y|=3$, then
$x+y$ is the centre of a triangle, which means that both
$\{x,y\}$ both $\{x',y'\}$ are altitudes. Finally, if
$|x+y|=4$, then $\{x,y\}$ both $\{x',y'\}$ are necessarily
axes. This yields the transitions $\hl\to\hl$, $\alt\to\alt$,
and $\ax\to\ax$, respectively.
\end{proof}

The previous theorem implies that the transition relation
$\T(X)$ of every $(2,2,)$-pole $X$ is contained in the set
\begin{align}\label{eq:A}
\mathcal{A}=\{ &\dpt\to\dpt, \hl\to\hl, \alt\to\alt, \ax\to\ax, \nonumber \\
               &\ang\to\ang, \ang\to\ls, \ls\to\ang, \ls\to\ls \}.
\end{align}
The elements of $\mathcal{A}$ will be called \textit{admissible
transitions}.

\begin{remark}\label{rem:admiss}
{\rm It is obvious that Theorem~\ref{thm:admiss-trans} can be
substantially strengthened as Kirchhoff's law provides
additional restrictions to admissible transitions. The
following statements hold for any transition
$\{x,y\}\to\{x',y'\}$ through an arbitrary $(2,2)$-pole $X$.
Their proofs are easy and therefore are left to the reader.
\begin{itemize}
\item[(i)] If $\{x,y\}\to\{x',y'\}$ has the form $\hl\to\hl$,
    then $\{x,y\}$ and $\{x',y'\}$ are half-lines with the
    same target, not necessarily identical.

\item[(ii)] If $\{x,y\}\to\{x',y'\}$ has the form $\ls\to\ls$,
    then $\{x,y\}=\{x',y'\}$.

\item[(iii)] If $\{x,y\}\to\{x',y'\}$ has the form
    $\ang\to\ls$, then $\{x',y'\}$ is the segment of the
    line of the opposite to the angle $\{x,y\}$ in the
    triangle containing $\{x',y'\}$. To be more precise, if
    $\{x,y\}=\{c_1+c_2, c_1+c_3\}$, then
    $\{x',y'\}=\{c_2,c_3\}$ for suitable corner points
    $c_1$, $c_2$, and $c_3$.

\item[(iv)] If $\{x,y\}\to\{x',y'\}$ has the form $\ang\to\ang$,
    then either $\{x,y\}=\{x',y'\}$, or $\{x,y\}$ and
    $\{x',y'\}$ are opposite angles of the rectangle formed
    by the union of triangles determined by $\{x,y\}$ and
    $\{x',y'\}$, respectively. The value $x+y=x'+y'$ is the
    midpoint of the line forming a diagonal of the
    rectangle.

\item[(v)] If $\{x,y\}\to\{x',y'\}$ has the form $\alt\to\alt$,
    then $\{x,y\}$ and $\{x',y'\}$ are altitudes of the
    same triangle.

\item[(vi)] There are no restrictions on transitions of the form
    $\ax\to\ax$ and $\dpt\to\dpt$. \qed
\end{itemize} }
\end{remark}

It is natural to ask whether all the admissible transitions can
actually occur in some dipole. The answer is positive but we
defer it until Example~\ref{ex:decol-trans} which is preceded
by the next two theorems.

\medskip

Consider a cubic graph $G$ with perfect matching index greater
than $4$, pick two adjacent vertices $u$ and $v$, and form the
$(2,2,)$-pole $D(I,O)=G_{uv}$. Our next theorem reveals the
fundamental property of the dipole constructed in the just
described way: for every non-vanishing transition
$\{x,y\}\to\{x',y'\}$ through $D(I,O)$ at most one of the pairs
$\{x,y\}$ is collinear.

\begin{theorem}\label{thm:decol-trans}
Let $G$ be a cubic graph with $\pi(G)\ge 5$, let $u$ and $v$ be
adjacent vertices of~$G$, and let $D(I,O)=G_{uv}$. If
$\{x,y\}\to\{x',y'\}$ is an arbitrary transition through $D$,
then at most one of the pairs $\{x,y\}$ and $\{x',y'\}$ is
collinear. In other words, $D$ has no collinear transition.
\end{theorem}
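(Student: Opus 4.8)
The plan is to argue by contradiction: assuming that \emph{both} $\{x,y\}$ and $\{x',y'\}$ are collinear, I would restore the deleted edge $uv$ and thereby manufacture a tetrahedral flow on the whole of $G$, which is impossible by Theorem~\ref{thm:1-1} since $\pi(G)\ge 5$. The entire argument rests on one structural observation about collinear pairs, after which the reconstruction is routine.

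First I would record the decisive feature of collinearity. By the classification in Section~\ref{sec:objects}, a collinear pair is either a line segment $\ls$ or a half-line $\hl$, and in both cases its two points are \emph{distinct} while their sum lies again in $T$: for $\{c_1,c_2\}\in\ls$ the sum is the midpoint $c_1+c_2$, and for $\{c_1,c_1+c_2\}\in\hl$ the sum is the corner point $c_2$. In either case the pair together with its sum is exactly the line $\langle c_1,c_2\rangle$ of $T$; equivalently, this is the "line, not circle" alternative of Lemma~\ref{lm:circle-line}. Hence, writing $s=x+y$, collinearity of $\{x,y\}$ gives $s\in T$ and makes $\{x,y,s\}$ a genuine line of $T$, and similarly $\{x',y',s'\}$ with $s'=x'+y'$. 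Since every transition through a $(2,2)$-pole obeys $x+y=x'+y'$ by Kirchhoff's law, the two traces coincide, $s=s'$. I would also note that collinearity forces $x\ne y$ and $x'\ne y'$, so $s\ne 0$ and the transition is automatically non-vanishing.

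Now comes the reconstruction. Recall that $D=G_{uv}$ is obtained from $G$ by deleting the adjacent vertices $u$ and $v$ and the edge $uv$, with the two remaining edges at $u$ forming the input connector and the two remaining edges at $v$ the output connector. Starting from a $T$-flow $\phi$ on $D$ that realises the transition, I would reattach $u$ and $v$, restore the edge $uv$, and define $\hat\phi$ by setting $\hat\phi(uv)=s$ and $\hat\phi=\phi$ on all edges of $D$. At $u$ the three incident values are $x$, $y$, and $s$; their sum is $x+y+s=s+s=0$, so Kirchhoff's law holds, and $\{x,y,s\}$ is a line of $T$, so the tetrahedral condition holds as well. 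The vertex $v$ is handled identically via $\{x',y',s\}$. Every other vertex of $G$ retains precisely the incidences and values it carried under $\phi$, so the $T$-flow condition persists there. Thus $\hat\phi$ is a $T$-flow on $G$, and Theorem~\ref{thm:1-1} converts it into a covering of $G$ by four perfect matchings, forcing $\pi(G)\le 4$ and contradicting the hypothesis.

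The proof has essentially no computational content; the heart of the matter — and the only point requiring care — is the observation isolated in the second paragraph, namely that collinearity of a pair is exactly what guarantees its trace is a legitimate point of $T$ completing a \emph{line}, so that the trace may label the restored edge $uv$. This is precisely where the other shapes fail: for an angle the trace is a point of $T$ but completes a \emph{circle} rather than a line, while for an altitude or axis the trace is not a point of $T$ at all, so in neither case is such a reconstruction available — which is exactly why the conclusion is about collinear pairs.
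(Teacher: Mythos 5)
Your proposal is correct and follows essentially the same route as the paper: assume both pairs are collinear, observe that the common trace $t=x+y=x'+y'$ is a point of $T$ lying on both lines, assign $t$ to the restored edge $uv$ to obtain a $T$-flow on $G$, and contradict Theorem~\ref{thm:1-1}. Your explicit remark that collinearity forces the pairs to be non-degenerate (so the transition is non-vanishing) cleanly absorbs the vanishing case that the paper dispatches separately at the outset.
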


\begin{proof}
If $\{x,y\}\to\{x',y'\}$ is a vanishing transition, then both
pairs $\{x,y\}$ and $\{x',y'\}$ are degenerate and hence not
collinear. Assume that $\{x,y\}\to\{x',y'\}$ is a non-vanishing
transition through $D$, and let $\phi$ be a $T$-flow on $D$
which induces it. Suppose to the contrary that both pairs
$\{x,y\}$ and $\{x',y'\}$ are collinear. It follows that there
exist lines $\ell$ and $\ell'$ in $T$ such that
$\{x,y\}\subseteq\ell$ and $\{x',y'\}\subseteq\ell'$. Since
$x+y=x'+y'=t$ for some point $t$ of $T$, we see that
$t\in\ell\cap\ell'$. If we extend the flow $\phi$ on $D$ to the
entire graph $G$ by assigning the value $t$ to the edge $uv$,
the edges around $u$ will be properly coloured from the line
$\ell$ and those around $v$ will be properly coloured from
$\ell'$. It follows that $G$ has a $T$-flow, contradicting
Theorem~\ref{thm:1-1}. Hence $X$ has no collinear transition.
\end{proof}

Theorem~\ref{thm:decol-trans} implies that every $(2,2)$-pole
$G_{uv}$ obtained from a cubic graph $G$ with $\pi(G)\ge 5$ by
removing two adjacent vertices and arranging the dangling edges
in the usual manner behaves like a \textit{collinearity
destroying gadget}: if the input pair of a transition is
collinear, then the  output pair must be non-collinear, and
vice versa.  More generally, every  $(2,2)$-pole $X$ which has
no collinear transitions will be called a  \textit{collinearity
destroying dipole}, or briefly a \textit{decollineator}.

\medskip

The following result provides a characterisation of
decollineators.

\begin{theorem}\label{thm:char-decol}
The following statements are equivalent for an arbitrary
$(2,2)$-pole $X$.
\begin{itemize}
\item[{\rm (i)}] $X$ is a decollineator, that is, $X$
    admits no collinear transition.

\item[{\rm (ii)}] $X$ has no transitions of the form
    $\ls\to\ls$ or $\hl\to\hl$.

\item[{\rm (iii)}] The cubic graph $G$ created from $X$ by
    adding to $X$ two adjacent vertices and attaching each
    of them to a connector of $X$ has $\pi(G)\ge5$.
\end{itemize}
\end{theorem}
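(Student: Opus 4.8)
The plan is to establish the three-way equivalence through two essentially independent arguments: a purely combinatorial equivalence (i)\,$\Leftrightarrow$\,(ii) read off directly from the classification of admissible transitions, and a geometric equivalence (i)\,$\Leftrightarrow$\,(iii) obtained by welding the two connectors of $X$ together through the new edge $uv$.

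For (i)\,$\Leftrightarrow$\,(ii), I would first recall that the collinear pairs in $\boldsymbol{\Pi}(T)$ are precisely the line segments and the half-lines, so any collinear transition must take one of the four forms $\ls\to\ls$, $\ls\to\hl$, $\hl\to\ls$, $\hl\to\hl$. Kirchhoff's law forces the trace $x+y=x'+y'$ of a transition to have a single well-defined weight; since a line segment has trace of weight~$2$ while a half-line has trace of weight~$1$, the mixed forms $\ls\to\hl$ and $\hl\to\ls$ cannot occur (equivalently, they are already absent from the admissible set $\mathcal{A}$ of (\ref{eq:A})). Hence a transition is collinear exactly when it is of the form $\ls\to\ls$ or $\hl\to\hl$, so $X$ admits a collinear transition if and only if it admits one of these two forms. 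This is precisely the equivalence of (i) and (ii).

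For (i)\,$\Leftrightarrow$\,(iii), the key observation is that for any collinear pair $\{x,y\}$ the sum $t=x+y$ is the third point of the line containing $\{x,y\}$, and in particular a point of~$T$. I would prove the equivalent statement that $G$ admits a tetrahedral flow if and only if $X$ has a collinear transition. Given a $T$-flow $\phi$ on $G$, set $t=\phi(uv)$; since the three edges at $u$ and the three edges at $v$ each span a line of~$T$, the restriction of $\phi$ to $X$ realises a transition $\{x,y\}\to\{x',y'\}$ in which $\{x,y,t\}$ and $\{x',y',t\}$ are lines of~$T$, so both pairs are collinear and the transition is non-vanishing. Conversely, given a collinear transition realised by a $T$-flow $\phi$ on $X$ with trace $t=x+y=x'+y'$, I would extend $\phi$ to $G$ by assigning the value $t$ to the edge $uv$; collinearity of $\{x,y\}$ and $\{x',y'\}$ guarantees that $\{x,y,t\}$ and $\{x',y',t\}$ are lines of~$T$, so the extension is a valid $T$-flow on~$G$. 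By Theorem~\ref{thm:1-1}, $G$ admits a tetrahedral flow if and only if $\pi(G)\le 4$, which yields the equivalence; the forward direction here is exactly Theorem~\ref{thm:decol-trans}.

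I expect the only points requiring care to be the weight bookkeeping that eliminates the mixed collinear forms in the first step and the verification in the second step that the welded value $t$ lies in~$T$ and closes each of $u$ and $v$ into a line of~$T$. Both reduce to the single elementary fact that two collinear points of a tetrahedron sum to the third point of their common line, so no genuine obstacle arises beyond organising these observations.
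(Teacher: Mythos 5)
Your proposal is correct and uses essentially the same ingredients as the paper's proof: the classification of collinear pairs as line segments and half-lines (together with the trace-weight argument that rules out mixed forms) for (i)\,$\Leftrightarrow$\,(ii), and the welding/extension argument through the edge $uv$ combined with Theorem~\ref{thm:1-1} and Theorem~\ref{thm:decol-trans} for the equivalence with (iii). The only difference is organisational --- you prove two biconditionals where the paper runs the cycle (i)\,$\Rightarrow$\,(ii)\,$\Rightarrow$\,(iii)\,$\Rightarrow$\,(i) --- which changes nothing of substance.
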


\begin{proof}
(i) $\Rightarrow$ (ii): Let $X$ be a decollineator.
Theorem~\ref{thm:admiss-trans} implies that the only possible
transitions through $X$ are the admissible transitions
constituting the set $\mathcal{A}$. Of them, only $\ls\to\ls$
and $\hl\to\hl$ are collinear. So $X$ has no transitions of the
form $\ls\to\ls$ or $\hl\to\hl$.

\medskip

(ii) $\Rightarrow$ (iii): Let $X$ be a $(2,2)$-pole that admits
no transitions of the form $\ls\to\ls$ or $\hl\to\hl$, and let
$G$ be the cubic graph formed from $X$ by adding two adjacent
vertices and attaching each of them to a connector of $X$. By
Theorem~\ref{thm:1-1} it is sufficient to show that $G$ has no
$T$-flow. If $G$ had one, say $\phi$, then the three edges
around $u$ and $v$ would receive values from lines $\ell_u$ and
$\ell_v$ of $T$, respectively. Under the induced flow on $X$,
the semiedges of the input connector receive values $x$ and $y$
from $\ell_u$,  and those of the output connector receive
values $x'$ and $y'$ from $\ell_v$. It means that $X$ has a
transition $\{x,y\}\to\{x',y'\}$ which is collinear, contrary
to the assumption.

\medskip

(iii) $\Rightarrow$ (i): This implication follows directly from
Theorem~\ref{thm:decol-trans}.
\end{proof}

Theorems~\ref{thm:admiss-trans}-\ref{thm:char-decol} combined
readily imply the following.

\begin{corollary}\label{cor:D}
Every decollineator $D$ has its transition relation
$\T(D)$ contained in the set
 \begin{align}\label{eq:D}
 \mathcal{D}=\{\dpt\to\dpt, \alt\to\alt, \ax\to\ax,
               \ang\to\ang, \ang\to\ls, \ls\to\ang\}.
 \end{align}
\end{corollary}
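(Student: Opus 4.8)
The plan is to obtain the corollary purely by combining the two preceding containment results, with no fresh geometric input. First I would invoke Theorem~\ref{thm:admiss-trans}, together with the displayed consequence immediately following it, which guarantees that for \emph{every} $(2,2)$-pole $X$ the transition relation satisfies $\T(X)\subseteq\mathcal{A}$, where $\mathcal{A}$ is the eight-element set of admissible transitions listed in~(\ref{eq:A}). In particular this already holds for a decollineator $D$, so that $\T(D)\subseteq\mathcal{A}$ is available before the decollineator hypothesis is used at all.

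Next I would bring in the assumption that $D$ is a decollineator. By the equivalence (i)$\Leftrightarrow$(ii) of Theorem~\ref{thm:char-decol}, a $(2,2)$-pole is a decollineator precisely when it admits no transition of the form $\ls\to\ls$ or $\hl\to\hl$. These two symbols may therefore be deleted from the list of transitions available to $D$, yielding $\T(D)\subseteq\mathcal{A}\setminus\{\ls\to\ls,\ \hl\to\hl\}$.

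It then remains only to verify that removing the two collinear symbols $\ls\to\ls$ and $\hl\to\hl$ from $\mathcal{A}$ leaves exactly the six-element set $\mathcal{D}$ of~(\ref{eq:D}); this is an immediate comparison of lists, since the surviving members $\dpt\to\dpt$, $\alt\to\alt$, $\ax\to\ax$, $\ang\to\ang$, $\ang\to\ls$ and $\ls\to\ang$ are precisely the admissible transitions that are either non-collinear or shape-changing. The hard part, such as it is, lies not in this corollary but in the theorems feeding it: the weight argument via Kirchhoff's law confines transitions to $\mathcal{A}$ in Theorem~\ref{thm:admiss-trans}, while the flow-extension argument of Theorem~\ref{thm:decol-trans} (used through Theorem~\ref{thm:char-decol}) singles out $\ls\to\ls$ and $\hl\to\hl$ as exactly the collinear transitions forbidden when $\pi\ge 5$. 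Consequently the proof reduces to a one-line set difference, and I would present it as such.
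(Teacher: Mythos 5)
Your proposal is correct and matches the paper exactly: the paper offers no separate proof, stating only that Theorems~\ref{thm:admiss-trans}--\ref{thm:char-decol} combined readily imply the corollary, which is precisely your argument of intersecting $\mathcal{A}$ with the decollineator condition excluding $\ls\to\ls$ and $\hl\to\hl$. The set difference $\mathcal{A}\setminus\{\ls\to\ls,\hl\to\hl\}=\mathcal{D}$ checks out, so nothing is missing.
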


The next example shows that each admissible transition occurs
in some $(2,2)$-pole.

\begin{example}\label{ex:decol-trans}{\rm
The decollineator $G_{uv}$ where $G$ is the Petersen graph has all
admissible non-collinear transitions. The transition relation
of the $(2,2)$-pole consisting of two adjacent vertices, with
input semiedges attached to one vertex and the output semiedges
attached to the other vertex, is the set $\{\ls\to\ls,
\hl\to\hl\}$ comprising the two collinear transitions.
}\end{example}

We finish this section with two results that reveal interesting
behaviour of bipartite cubic graphs.

\begin{proposition}\label{prop:bip-3pole}
Let $G$ be a bipartite cubic graph and $v$ an arbitrary vertex
of $G$. Then for every tetrahedral flow on the $3$-pole $G_v$
the flow values of the three semiedges of $G_v$ form a line
of~the tetrahedron.
\end{proposition}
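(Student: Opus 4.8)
The plan is to pin down the triple of semiedge values $\{a,b,c\}$ of $G_v$ by comparing two quantities: their sum in $\mathbb{Z}_2^4$ and the sum of their weights $|\cdot|_T$. The sum in $\mathbb{Z}_2^4$ will be shown to vanish, which by Lemma~\ref{lm:circle-line} forces $\{a,b,c\}$ to be either a line or a circle of $T$; the weight sum will be pinned to exactly $4$ using bipartiteness, and since a circle has weight sum $6$ while a line has weight sum $4$, the circle case is excluded. It is worth stressing in advance that the vanishing of $a+b+c$ alone is \emph{not} enough, as it admits a circle; bipartiteness is what separates the two cases.

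First I would establish that $a+b+c=0$; this step is valid for any cubic graph, not just a bipartite one. Every vertex of $G_v$ is $3$-valent and, being an interior vertex, satisfies Kirchhoff's law, so the three incident flow values (all points of a line of $T$) sum to zero in $\mathbb{Z}_2^4$. Summing this identity over all vertices of $G_v$, each interior edge is counted at both its endpoints and contributes $2\phi(e)=0$, while each of the three dangling edges is counted once. Hence the surviving terms give $a+b+c=0$, and Lemma~\ref{lm:circle-line} then tells us that $\{a,b,c\}$ is a line or a circle of $T$.

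The heart of the argument, where bipartiteness enters, is the weight count. Let $(A,B)$ be the bipartition of $G$ with $v\in A$; since $G$ is cubic and bipartite, $|A|=|B|$. Every line of $T$ consists of two corner points and one midpoint, so the three flow values around any interior vertex have weights summing to $1+1+2=4$. Summing these vertex contributions over all $b\in B$ counts each edge of $G_v$ exactly once, because every edge has a unique endpoint in $B$ and no vertex of $B$ is deleted; this yields $\sum_{e}|\phi(e)|_T=4|B|$. Summing instead over $A\setminus\{v\}$ counts every edge except the three incident with $v$ (no edge joins two vertices of $A$, and the only deleted vertex is $v$), giving $\sum_{e\text{ not at }v}|\phi(e)|_T=4(|A|-1)$. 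Subtracting and using $|A|=|B|$ leaves the combined weight of the three semiedges equal to $4|B|-4(|A|-1)=4$.

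To conclude, $\{a,b,c\}$ is a line or a circle of $T$ with total weight $4$. A circle consists of three midpoints and has weight sum $6$, so the circle case is impossible; therefore $\{a,b,c\}$ is a line. (Equivalently, weight sum $4$ forces weights $1,1,2$, the two corner points must be distinct since otherwise $a+b+c$ would equal the remaining midpoint rather than $0$, and then $a+b+c=0$ makes the midpoint their sum, exhibiting a genuine line.) I expect the only real obstacle to be the recognition that the easy $\mathbb{Z}_2^4$-sum argument is insufficient on its own, and that bipartiteness must be used through the balanced weight count, where the equality $|A|=|B|$ together with the constant vertex-weight $4$ is exactly what rules out the circle.
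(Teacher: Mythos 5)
Your proof is correct. It follows the paper's skeleton for the first two steps --- Kirchhoff's law gives $x+y+z=0$, and Lemma~\ref{lm:circle-line} reduces the question to ``line or circle'' --- but you exclude the circle case by a genuinely different mechanism. The paper observes that if the three semiedge values were all midpoints, then the edges carrying a weight-$1$ value would form a $2$-factor of $G-v$; since $G-v$ has an odd number of vertices, this $2$-factor contains an odd circuit, contradicting bipartiteness. You instead double-count the weights $|\phi(e)|_T$ over the bipartition: every vertex sees total weight $1+1+2=4$, regularity gives $|A|=|B|$, and subtracting the two edge sums pins the total weight on the three semiedges to exactly $4$, which a circle (weight $2+2+2=6$) cannot achieve. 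Both routes use bipartiteness together with $3$-regularity (the paper through the odd order of $G-v$, you through the balance $|A|=|B|$); the paper's version is shorter and more structural, while yours is more quantitative and actually determines the weight multiset $\{1,1,2\}$ on the connector outright --- which, combined with $a+b+c=0$, yields the line directly, as your closing parenthesis notes, so the appeal to Lemma~\ref{lm:circle-line} is strictly speaking dispensable in your version.
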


\begin{proof}
Let $x$, $y$, and $z$ be the values assigned to the semiedges
of $G_v$. From Kirchhoff's law we know that $x+y+z=0$. By
Lemma~\ref{lm:circle-line}, the triple $\{x,y,z\}$ is either a
line or a circle of $T$. If $\{x,y,z\}$ is a circle, then the
edges carrying a value of weight $1$ must form a $2$-factor $F$ of
$G-v$. Since $G-v$ has an odd number of vertices, $F$ contains
an odd circuit, and hence $G$ is not bipartite, contrary to the
assumption. Therefore $\{x,y,z\}$ is a line, as claimed.
\end{proof}

\begin{proposition}\label{prop:bip22pole}
Let $G$ be an arbitrary bipartite cubic graph, and let $u$ and
$v$ be adjacent vertices of $G$. Then $G_{uv}$ is a stationary
$(2,2)$-pole.
\end{proposition}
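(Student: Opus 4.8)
The plan is to reduce the statement, via Theorem~\ref{thm:admiss-trans}, to excluding the only two potentially non-stationary transitions, and then to dispose of these by a bipartiteness/parity count in the spirit of Proposition~\ref{prop:bip-3pole}. Theorem~\ref{thm:admiss-trans} guarantees that every transition through an arbitrary $(2,2)$-pole is stationary except possibly those of the form $\ls\to\ang$ or $\ang\to\ls$. Hence it suffices to show that the dipole $G_{uv}$ obtained from a bipartite cubic graph admits neither form. Moreover, interchanging the roles of the adjacent vertices $u$ and $v$ swaps the input and output connectors and turns an $\ang\to\ls$ transition of $G_{uv}$ into an $\ls\to\ang$ transition of $G_{vu}$; since $v,u$ are again adjacent vertices of the same bipartite cubic graph, it is enough to rule out transitions of the single form $\ls\to\ang$.

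Suppose, for contradiction, that some $T$-flow $\phi$ on $G_{uv}$ realises a transition $\{x,y\}\to\{x',y'\}$ with $\{x,y\}$ a line segment and $\{x',y'\}$ an angle. First I would record the weight bookkeeping: for any tetrahedron, each line consists of two corner points (weight $1$) and one midpoint (weight $2$), a line segment is a pair of corner points, and an angle is a pair of midpoints. Consequently the two input semiedges of $G_{uv}$ carry weight-$1$ values while the two output semiedges carry weight-$2$ values. Next I would pass to the subgraph $R$ formed by the edges whose $\phi$-value has weight $1$. Exactly as in Proposition~\ref{prop:bip-3pole}, at every internal vertex precisely two of the three incident edges have weight-$1$ value, so each internal vertex has degree $2$ in $R$; the only dangling edges lying in $R$ are the two input semiedges.

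The heart of the argument is then a parity count on $R$. Since all internal vertices have degree $2$ and there are exactly two semiedge ends (the input semiedges), $R$ is a disjoint union of circuits together with a single path $P$ whose two loose ends are those input semiedges; these ends attach to the two neighbours $w_1,w_2$ of $u$ distinct from $v$. In the bipartition of $G$ the vertices $w_1,w_2$ lie in the same class (opposite to $u$), so the part of $P$ joining $w_1$ to $w_2$ has even length and hence passes through an odd number of vertices; every circuit of $R$ is even, covering an even number of vertices. Adding up, the $|V(G)|-2$ internal vertices split into an odd number (on $P$) plus an even number (on the circuits), forcing $|V(G)|-2$ to be odd, contrary to $|V(G)|$ being even. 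This contradiction excludes $\ls\to\ang$, and with it $\ang\to\ls$, so $G_{uv}$ is stationary. I expect the only delicate point to be the structural bookkeeping for $R$ — verifying that a subgraph with all internal degrees $2$ and exactly two semiedge ends is a union of circuits plus one path — together with the degenerate multigraph case $w_1=w_2$, where $P$ collapses to $g_1\,w_1\,g_2$ and still covers an odd number of vertices; the weight count itself is a direct transcription of Proposition~\ref{prop:bip-3pole}.
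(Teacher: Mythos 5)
Your proof is correct, but it takes a different route from the paper in its second half. Both arguments begin identically: invoke Theorem~\ref{thm:admiss-trans} to reduce the claim to excluding the two transitions $\ls\to\ang$ and $\ang\to\ls$, and use a symmetry to handle only one of them. At that point the paper finishes in two lines: assuming $G_{uv}$ admits $\ang\to\ls$, it reinstates the vertex $v$ and the dangling edge $uv$, observes that the resulting $T$-flow on $G-u=G_u$ forces the three semiedge values of $G_u$ to be the three midpoints of a triangle, i.e.\ a circle, and this contradicts Proposition~\ref{prop:bip-3pole}. You instead run a self-contained parity argument directly on $G_{uv}$: the subgraph $R$ of weight-$1$ edges is $2$-regular at every vertex, so it decomposes into circuits plus one path joining the two input semiedges, and bipartiteness forces an odd total vertex count against the even number $|V(G)|-2$. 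This is essentially the proof of Proposition~\ref{prop:bip-3pole} (which also counts parities of the weight-$1$ subgraph) transplanted into the $(2,2)$-pole setting, so your argument is longer but makes the combinatorial mechanism explicit and does not rely on the earlier proposition as a black box; the paper's reduction is shorter and reuses work already done. Your attention to the two delicate points — that both semiedges of $R$ must lie in a single path component (by a handshake count) and the degenerate case $w_1=w_2$ — is warranted and correctly resolved.
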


\begin{proof}
Suppose to the contrary that $G_{uv}$ is not stationary.
Theorem~\ref{thm:admiss-trans} then implies that the dipole
$G_{uv}$ admits a transition $\ang\to\ls$ or its reverse.
Without loss of generality we may assume that $G_{uv}$ admits
$\ang\to\ls$. Let $\phi$ be a $T$-flow of $G_{uv}$ that induces
this transition. Then $\phi$ gives rise to a $T$-flow of $G-u$
under which the values on the semiedges of $G_u$ form a circle.
This contradicts Proposition~\ref{prop:bip-3pole}.
\end{proof}

\section{Weighted transitions through (2,2;1)-poles}
\label{sec:wtrans}

Consider an arbitrary $(2,2;1)$-pole $Y=Y(I,O;r)$ with input
connector $I=\{g_1,g_2\}$, output connector $O=\{h_1,h_2\}$,
and residual semiedge $r$. Let $T$ be a tetrahedron in
$PG(3,2)$, let $\{x,y\}$ and $\{x',y'\}$ be elements of
$\boldsymbol{\Pi}(T)$, and finally let $i\in\{1,2\}$. We say
that $X$ has a \textit{transition}
$$\{x,y\}\overset i\to\{x',y'\}$$
if there exists a $T$-flow $\phi$ on $Y$ such that
$\{\phi(g_1),\phi(g_2)\}=\{x,y\}$,
$\{\phi(h_1),\phi(h_2)\}=\{x',y'\}$, and the \textit{residual
value} $\phi(r)$ has weight $i$. For emphasis, such a
transition will be called a \textit{weighted transition}.
However, whenever the context permits it, the adjective
`weighted' will be dropped.

Every weighted transition $\{x,y\}\overset i\to\{x',y'\}$
between pairs of points induces a transition
$\mathtt{s}\overset i\to\mathtt{t}$ where $\mathtt{s}$ is the
shape of $\{x,y\}$ and $\mathtt{t}$ is the shape of
$\{x',y'\}$. Although the weight~$i$ of a transition
$\{x,y\}\overset i\to\{x',y'\}$ is uniquely determined by
$\{x,y\}$ and $\{x',y'\}$, the same might not hold for
$\mathtt{s}\overset i\to\mathtt{t}$. The reason resides in the
fact that the same shapes may correspond to different geometric
positions of the corresponding pairs of points within the
tetrahedron. Hence, the weight of a transition
$\mathtt{s}\overset i\to\mathtt{t}$ carries an information that
cannot be immediately recovered from $\mathtt{s}$ and
$\mathtt{t}$.

Most of the terminology developed in Section~\ref{sec:trans}
for unweighted transitions directly modifies to weighted
transitions. In particular, a \textit{vanishing} weighted
transition is one where either the input pair or the output
pair is a double point.

Our first aim is to study the weighted transitions through a
$(2,2;1)$-pole $G_{uwv}$ obtained from a bipartite cubic graph
$G$ by removing a path $uwv$, where the input $I=\{g_1,g_2\}$
and the output $O=\{h_1,h_2\}$ are formed by the semiedges
formerly incident with $u$ and $v$, respectively, and the
residual semiedge is the one formerly incident with $w$. For
brevity, such a $(2,2;1)$-pole will be called
\textit{bipartite}. Analogously we say that a $(2,2)$-pole
$G_{uv}$ arising from a bipartite cubic graph by removing two
adjacent vertices is \textit{bipartite}.

\medskip

We need two lemmas.

\begin{lemma}\label{lm:bip-balance}
Let $G$ be a bipartite graph and let $uwv$ be a path of
length~$2$ in~$G$. For an arbitrary tetrahedral flow on the
$(2,2;1)$-pole $G_{uwv}$, the total number of non-residual
semiedges receiving a value of weight $2$ does not exceed $2$.
\end{lemma}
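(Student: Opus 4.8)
The plan is to exploit a single structural feature of tetrahedral flows: the edges carrying a value of weight~$2$ form a perfect matching. Indeed, every line of $T$ consists of two corner points (weight~$1$) and exactly one midpoint (weight~$2$), so at each internal vertex of $G_{uwv}$ precisely one of the three incident edges (ordinary or dangling) carries a midpoint value. Consequently the weight-$2$ edges, together with the weight-$2$ dangling edges, cover every internal vertex of $G_{uwv}$ exactly once. Writing $p$ for the number of weight-$2$ semiedges among the four non-residual ones $g_1,g_2,h_1,h_2$, and $q\in\{0,1\}$ for the weight indicator of the residual semiedge $r$, the goal $p\le 2$ will follow from the sharper identity $p=q+1$, which I would establish by a parity count on the bipartition.

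The first step is to record the bipartite structure. Since $uwv$ is a path and $G$ is bipartite, the endpoints $u$ and $v$ lie in one colour class, say $X$, while $w$ lies in the other class $Y$. Each internal vertex meets exactly one weight-$2$ edge, so summing these incidences over the $X$-internal and the $Y$-internal vertices yields $|X_{\mathrm{int}}|$ and $|Y_{\mathrm{int}}|$, respectively. As $G$ is cubic and bipartite we have $|X|=|Y|$, and deleting $u,v\in X$ together with $w\in Y$ gives $|Y_{\mathrm{int}}|-|X_{\mathrm{int}}|=1$. Because every \emph{internal} weight-$2$ edge joins an $X$-vertex to a $Y$-vertex, it contributes one endpoint to each side; hence, letting $|I|$ be the number of internal weight-$2$ edges and $k_X,k_Y$ the numbers of weight-$2$ \emph{dangling} edges attached to $X$- and $Y$-internal vertices, I obtain $|X_{\mathrm{int}}|=|I|+k_X$ and $|Y_{\mathrm{int}}|=|I|+k_Y$. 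Subtracting cancels $|I|$ and leaves $k_Y-k_X=1$.

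It then remains to match $k_X,k_Y$ with $q,p$. The semiedges $g_1,g_2$ and $h_1,h_2$ are attached to neighbours of $u$ and $v$, which lie in $Y$, whereas the residual semiedge $r$ is attached to a neighbour of $w$, which lies in $X$. Therefore $k_Y=p$ and $k_X=q$, so $p=q+1\le 2$, as claimed. The only point that demands genuine care is that the incidence count must be performed as a count of edge-\emph{endpoints} rather than of edges, so that it stays valid even when two semiedges meet the same internal vertex or when $G$ has short cycles near the path $uwv$; with that convention each internal weight-$2$ edge is automatically counted once on each side and the cancellation is clean. Everything else is bookkeeping, and the decisive geometric input is simply that every line of $T$ carries exactly one midpoint.
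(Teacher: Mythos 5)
Your proof is correct and follows essentially the same route as the paper's: both arguments double-count the weight-$2$ edges across the bipartition using the fact that every internal vertex meets exactly one edge of weight $2$, arriving at the same sharp identity (your $p=q+1$ is the paper's $\bar u+\bar v=\bar w+1$). The only difference is cosmetic bookkeeping — you count incidences inside the multipole, the paper counts edges of $G$ leaving each colour class.
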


\begin{proof}
Let $\{A,B\}$ be a bipartition of $G$. Since $G$ is regular, we
have $|A|=|B|=m$ for some integer $m$. Without loss of
generality we may assume that $u$ and $v$ belong to $A$ while
$w$ belongs $B$. Under any given $T$-flow on $G-\{u,w,v\}$,
each vertex of $G-\{u,w,v\}$ is incident with exactly one edge
that carries a value of weight $2$. Let $\bar p$ denote the
number of semiedges of $G_{uwv}$ formerly incident with the
vertex $p\in\{u,w,v\}$ that receive a flow value of weight $2$.
Counting the edges with a value of weight $2$ leaving $A$ and
those leaving $B$ yields
$$\bar u + \bar v + (m-2) = \bar w + (m-1),$$
which simplifies to
$$\bar u + \bar v = \bar w + 1.$$
However, $\bar w\le 1$, so $\bar u + \bar v \le 2$, which is
equivalent to the statement of this lemma.
\end{proof}

\begin{lemma}\label{lm:semi-parity}
In an arbitrary $k$-pole $Q$ endowed with a tetrahedral flow,
the number of semiedges carrying a value of weight $2$ has the
same parity as $k$.
\end{lemma}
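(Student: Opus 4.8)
The plan is to reduce the statement to a coordinate-wise parity count, exploiting the fact that a tetrahedral flow is a $\mathbb{Z}_2^4$-flow expressed in the basis formed by the corner points of $T$. Fix the tetrahedron $T=T(p_1,p_2,p_3,p_4)$ underlying the given flow $\phi$ on $Q$, and for each $i\in\{1,2,3,4\}$ let $\phi_i\colon E(Q)\to\mathbb{Z}_2$ be the $i$-th coordinate of $\phi$ relative to the basis $\{p_1,p_2,p_3,p_4\}$, so that $\phi(e)=\sum_i\phi_i(e)p_i$. Since every value of a tetrahedral flow is a point of $T$ and hence of weight $1$ or $2$, we have $|\phi(e)|_T=\sum_{i=1}^4\phi_i(e)$ read as an integer sum of the $\mathbb{Z}_2$-entries. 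Because extracting a coordinate in a fixed basis is a linear functional and Kirchhoff's law is linear, each $\phi_i$ is itself a $\mathbb{Z}_2$-flow on $Q$.

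First I would establish the one-coordinate fact that, for any $\mathbb{Z}_2$-flow $f$ on a $k$-pole, the number of semiedges on which $f$ takes the value $1$ is even. This is the standard ``sum Kirchhoff over all vertices'' computation: summing the local conservation identity over every vertex of $Q$, each edge joining two vertices is counted twice and therefore contributes $2f(e)=0$ in $\mathbb{Z}_2$, while each dangling edge contributes the value at its unique incident end exactly once. Hence $\sum_s f(s)=0$ in $\mathbb{Z}_2$, where $s$ ranges over all semiedges (a degenerate edge with two free ends contributes $2f(e)=0$ and causes no harm). Consequently $\#\{s : f(s)=1\}$ is even.

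Applying this to each $\phi_i$ shows that $n_i:=\#\{s:\phi_i(s)=1\}$ is even for every $i$, whence $\sum_s|\phi(s)|_T=\sum_{i=1}^4 n_i$ is even. Now I would split the $k$ semiedges by weight: if $a$ of them carry a value of weight $1$ and $b$ carry a value of weight $2$, then $a+b=k$ and $\sum_s|\phi(s)|_T=a+2b$. Since this sum is even, $a$ is even, and therefore $b=k-a\equiv k\pmod 2$, which is exactly the claim.

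I expect the only delicate point to be the passage from Kirchhoff's law to the semiedge sum on a multipole, where one must correctly account for edges with both ends attached, ordinary dangling edges, and the degenerate edges with two free ends; modulo $2$ all three behave uniformly, so this obstacle is minor. The real conceptual key is simply to read the weight through the $T$-basis coordinates, each of which is an honest $\mathbb{Z}_2$-flow, so that the global parity constraint is assembled from four independent $\mathbb{Z}_2$-flow parities.
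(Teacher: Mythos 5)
Your proof is correct, but it follows a genuinely different route from the paper's. The paper's argument is a one-line consequence of the \emph{line condition} of a tetrahedral flow: since every line of $T$ contains exactly one midpoint, every vertex of $Q$ is incident with exactly one edge of weight $2$, so counting vertex--edge incidences along the weight-$2$ edges gives $|V(Q)|=2|W|-b$ (where $W$ is the set of weight-$2$ edges and $b$ the number of weight-$2$ semiedges), and combining this with $k=2|E(Q)|-3|V(Q)|\equiv |V(Q)| \pmod 2$ yields the claim. You instead ignore the line condition entirely and use only two facts: that each coordinate $\phi_i$ of $\phi$ in the basis $\{p_1,p_2,p_3,p_4\}$ is a $\mathbb{Z}_2$-flow whose boundary (the set of semiedges with value $1$) has even size, and that every point of $T$ has weight $1$ or $2$, so that $\sum_s\lvert\phi(s)\rvert_T=a+2b$ with $a+b=k$ forces $a$ even. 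Both arguments are elementary parity counts; yours is marginally more general, since it applies to any $\mathbb{Z}_2^4$-flow valued in the points of $T$ without requiring the flow pattern at each vertex to be a line, whereas the paper's exploits precisely that geometric condition and in exchange needs no coordinate decomposition. Your handling of the delicate multipole bookkeeping (edges with zero, one, or two free ends) is accurate.
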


\begin{proof}
This is a direct consequence of the fact that in a multipole
furnished with a tetra\-hedral flow every vertex is incident
with exactly one edge that carries a value of weight~$2$.
\end{proof}

The combination of Lemmas~\ref{lm:bip-balance}
and~\ref{lm:semi-parity} significantly restricts potential
transitions through a $(2,2;1)$-pole $G_{uwv}$ arising from a
bipartite graph $G$. In the next theorem we specify a set $\mathcal{B}$ of 
weighted transitions that can occur for such a $(2,2;1)$-pole. The subsequent 
Remark~\ref{rem:Hw} confirms that $\mathcal{B}$ is a minimal transition set 
with this property because it actually occurs as the transition relation for a 
suitable bipartite $(2,2;1)$-pole~$G_{uwv}$.

The diagram of $\mathcal{B}$ is depicted in Figure~\ref{fig:transB}. In this 
diagram, and in all other diagrams representing weighted transition relations, 
undirected connections between two shapes $\mathtt{s}$ and $\mathtt{t}$ indicate 
the existence of both the transition $\mathtt{s}\overset i\to\mathtt{t}$ and
$\mathtt{t}\overset i\to\mathtt{s}$. Transitions of weight $2$
are represented by bold lines.

\begin{figure}[htbp]
\begin{center}
     \scalebox{0.45}
     {
       \input{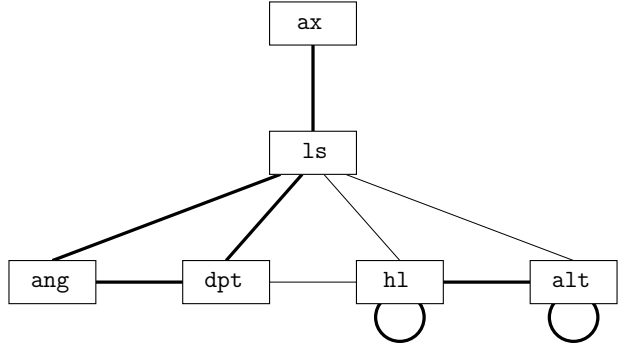}
     }
\end{center}
  \caption{The set of transitions $\mathcal{B}$ through bipartite $(2,2;1)$-poles}
  \label{fig:transB}
\end{figure}

\begin{theorem}\label{thm:wtrans-bip}
Let $G$ be a bipartite cubic graph and let $uwv$ be a path
of length~$2$ in~$G$. Then $\T(G_{uvw})\subseteq \mathcal{B}$.
\end{theorem}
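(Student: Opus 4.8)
The plan is to run every weighted transition $\{x,y\}\overset i\to\{x',y'\}$ realised by a $T$-flow $\phi$ on $Y=G_{uwv}$ through three numerical filters and read off the surviving shapes. First I would invoke Kirchhoff's law on the $5$-pole $Y$: summing the flow over all semiedges gives $\phi(g_1)+\phi(g_2)+\phi(h_1)+\phi(h_2)+\phi(r)=0$ in $\mathbb{Z}_2^4$, so the residual value is $\phi(r)=(x+y)+(x'+y')$. Since $\phi$ is tetrahedral, $\phi(r)$ is a point of $T$ and hence has weight $1$ or $2$; therefore $i=|\phi(r)|\in\{1,2\}$ and $i$ equals the weight of the sum of the input trace $x+y$ and the output trace $x'+y'$. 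Recording the trace weights of the six shapes — namely $0$ for $\dpt$, $1$ for $\hl$, $2$ for $\ls$ and $\ang$, $3$ for $\alt$, and $4$ for $\ax$ — already tells me, for each ordered pair of shapes, whether a residual of weight $1$ or $2$ is attainable at all.

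Next I would set up the weight-$2$ counters: let $a$, $b$, and $c$ be the numbers of semiedges carrying a value of weight $2$ among the input pair, the output pair, and on the residual edge, so that $c=1$ precisely when $i=2$. Lemma~\ref{lm:bip-balance} supplies $a+b\le 2$, and Lemma~\ref{lm:semi-parity}, applied to the $5$-pole $Y$, gives that $a+b+c$ is odd. Combining the two, $i$ is forced by the parity of $a+b$: if $a+b$ is even then $i=2$, and if $a+b$ is odd then $i=1$. This reduces everything to the three cases $a+b\in\{0,1,2\}$, and in each case the weight-$2$ count of a pair pins its shape down to a short list (count $0$: $\ls$ or a double corner; count $1$: $\hl$ or $\alt$; count $2$: $\ang$, $\ax$, or a double midpoint).

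The core of the argument is then the case analysis, in which I combine the shape lists with the trace-weight condition from the first step. When $a+b=0$ both pairs consist of corner points only, $i=2$, and the requirement $|(x+y)+(x'+y')|=2$ kills $\dpt\to\dpt$ (trace sum $0$), leaving $\ls\overset2\to\ls$ and $\ls\overset2\to\dpt$. When $a+b=1$ one pair is $\hl$ or $\alt$ and the other is $\ls$ or a double corner, with $i=1$; since an altitude has trace weight $3$, combining $\alt$ with a double corner would force $|\phi(r)|=3$, so that case dies and we keep $\hl\overset1\to\ls$, $\hl\overset1\to\dpt$, and $\alt\overset1\to\ls$. When $a+b=2$ and $i=2$, the two weight-$2$ points either split between the pairs, giving $\hl\overset2\to\hl$, $\hl\overset2\to\alt$, $\alt\overset2\to\alt$, or sit in one pair, giving $\ls\overset2\to\ang$, $\ls\overset2\to\ax$, and $\ang\overset2\to\dpt$. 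The union of the three lists is exactly the set $\mathcal{B}$ of Figure~\ref{fig:transB}.

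The main obstacle, and the one spot needing genuine care, is the bookkeeping when $a+b=2$: the shapes $\ang$, $\ax$, and a double midpoint all present two points of weight $2$ and so are indistinguishable by the counters alone. They must be separated by their trace weights — a midpoint ($2$) for $\ang$, the barycentre ($4$) for $\ax$, and $0$ for a double midpoint — and it is precisely the bound $|\phi(r)|\le 2$ that then forbids $\ax\to\dpt$ and the double-midpoint-to-double-corner combination, and that explains why $\ang$ may pair with a double corner but $\ax$ may not. The symmetric high-count pairings $\ang\to\ang$, $\ang\to\ax$, and $\ax\to\ax$ never even arise, since $a+b\le 2$ from Lemma~\ref{lm:bip-balance} excludes them at the outset. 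Keeping straight these refinements of the zero- and one-count shapes ($\ls$ versus a double corner, $\hl$ versus $\alt$), rather than any hard computation, is what the proof really rests on.
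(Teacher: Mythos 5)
Your three numerical filters (Kirchhoff on the $5$-pole, Lemma~\ref{lm:bip-balance}, and Lemma~\ref{lm:semi-parity}) are exactly the machinery the paper uses, and almost all of your case analysis matches the paper's. But there is one genuine gap, and it is fatal to the statement as written: in your case $a+b=0$ you keep the transition $\ls\overset{2}\to\ls$, and this transition is \emph{not} in $\mathcal{B}$. It survives every one of your filters --- the traces $x+y$ and $x'+y'$ are midpoints whose sum can perfectly well be another midpoint $z$ --- so no amount of weight/parity/trace bookkeeping will remove it. Your closing claim that ``the union of the three lists is exactly the set $\mathcal{B}$'' is therefore false.

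The paper excludes $\ls\overset{2}\to\ls$ by a separate, non-numerical argument that uses bipartiteness a second time: if both pairs are line segments and $z$ is a midpoint, then reinstating $u$ and $v$ (but not $w$) yields a tetrahedral flow on the $3$-pole $G_w$ whose three semiedge values are $x+y$, $z$, and $x'+y'$ --- three distinct midpoints summing to zero, i.e.\ a circle of $T$ by Lemma~\ref{lm:circle-line}. Proposition~\ref{prop:bip-3pole} forbids a circle on the semiedges of $G_w$ when $G$ is bipartite (a circle would force a $2$-factor of $G-w$, which has an odd number of vertices). You need to add this step; note that Lemma~\ref{lm:bip-balance} alone only controls the counts at $u$, $v$, $w$ and cannot see this obstruction. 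Your remaining exclusions ($\alt$ against $\dpt$, $\ax$ against $\dpt$, $\dpt\to\dpt$) are correct and agree with the paper's Cases 1--3.
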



\begin{proof}
Let $\{x,y\}\overset i\to\{x',y'\}$ be an arbitrary transition
through $G_{uwv}$ and let $z$ be the corresponding residual
value. Since $\mathcal{B}$ is a symmetric relation, from each pair of 
transitions
$\{x,y\}\overset i\to\{x',y'\}$ and $\{x',y'\}\overset i\to\{x,y\}$  through 
$G_{uvw}$, whenever they exist, it is sufficient to check
the one with $|x|+|y|\ge |x'|+|y'|$.


To prove the required inclusion we consider three main
cases depending on the weight of $x$ and $y$. Throughout the
proof we therefore distinguish between the shapes $\dc$ and
$\dm$. In our discussion we will say that a weighted transition
$\mathtt{s}\overset i\to\mathtt{t}$ through $G_{uwv}$ is
\textit{valid} if it occurs in~$\mathcal{B}$.

\medskip\noindent
\textbf{Case 1.} $|x|=|y|=2$.
From Lemma~\ref{lm:bip-balance} we get that $|x'|=|y'|=1$.

\medskip\noindent
\textbf{Case 1.1.} $x=y$. In this case $x'+y'=z\ne 0$, so
$\{x',y'\}$ is a line segment. Hence, we have a valid
transition $\dm\overset 2\to\ls$.

\medskip\noindent
\textbf{Case 1.2.} $x\ne y$. This time $\{x,y\}$ is either an
axis or an angle. First assume that $\{x,y\}$ is an axis. Then
$x'+y'\ne 0$ for otherwise we would have $z=x+y$, where $x+y$
is not a point of the tetrahedron. Therefore $\{x',y'\}$ is a
line segment. From Lemma~\ref{lm:semi-parity} we infer that
$|z|=2$, which yields a valid transition $\ax\overset 2\to\ls$.
Next assume that $\{x,y\}$ is an angle. Recall that $\{x',y'\}$
is a pair of corner points. If $x'=y'$, then we obtain the
valid transition $\ang\overset 2\to\dc$ with residue $z=x+y$,
which is the third point of the circle $\{x,y,x+y\}$. If $x'\ne
y'$, then $\{x',y'\}$ is a line segment, and by
Lemma~\ref{lm:semi-parity} the residual value is a midpoint. We
have thus obtained the transition $\ang\overset 2\to\ls$, which
again is valid.

\medskip\noindent
\textbf{Case 2.} $|x|=|y|=1$.

\medskip\noindent
\textbf{Case 2.1.} $x=y$. If $z$ is a midpoint, then
Lemma~\ref{lm:semi-parity} implies that $|x'|=|y'|$. 
If $|x'|=|y'|=1$, then $x'+y'=z\ne 0$, so $\{x',y'\}$
is a line segment. Thus we have obtained the
transition $\dc\overset 2\to\ls$, which is again valid.

If $z$ is a corner point, then from Lemma~\ref{lm:semi-parity}
we infer that $|x'|\ne |y'|$, which in turn implies that   $|x|+|y|< |x'|+|y'|$, 
a contradiction. 

\medskip\noindent
\textbf{Case 2.2.} $x\ne y$. In this case $\{x,y\}$ is a line
segment. If $z$ is a corner point, we again get a cotradiction with $|x|+|y|< 
|x'|+|y'|$. Assume therefore that $z$ is a midpoint. Then $|x'|=|y'|=1$. We 
claim that in
this situation $x'=y'$. If not, then $\{x',y'\}$ is a line
segment, which gives rise to the transition $\ls\overset
2\to\ls$. However, any $T$-flow on $G_{uwv}$ corresponding to
this transition would induce a $T$-flow on $G_w$ under which
the values on the semiedges of $G_w$ form a circle
$\{x+y,z,x'+y'\}$. This contradicts
Proposition~\ref{prop:bip-3pole}. Hence $x'=y'$, which again
gives rise to a valid transition $\ls\overset 2\to\dc$.

\medskip\noindent
\textbf{Case 3.} $|x|\ne |y|$. This means that $\{x,y\}$ is
either an altitude or a half-line. If $z$ is a midpoint,
Lemma~\ref{lm:semi-parity} yields that $|x'|\ne |y'|$, which
implies that $\{x',y'\}$ is either a half-line or an altitude
as well. The resulting transitions are $\hl\overset 2\to\hl$,
$\alt\overset 2\to\alt$, and $\hl\overset 2\to\alt$ or its
reverse, all of them valid.

If $z$ is a corner point, then $|x'|=|y'|$, by
Lemma~\ref{lm:semi-parity}. However, Lemma~\ref{lm:bip-balance}
excludes the possibility that $|x'|=|y'|=2$, so $|x'|=|y'|=1$.
If $x'=y'$, then $\{x,y\}$ cannot be an altitude, because
otherwise $z=x+y$ whence $z$ would not be a point of $T$.
Therefore $\{x,y\}$ is a half-line and we obtain the valid
transition $\hl\overset 1\to\dc$. Finally, if $x'\ne y'$, then
$\{x',y'\}$ is a line segment, which yields transitions
$\hl\overset 1\to\ls$ and $\alt\overset 1\to\ls$, both of them
valid.

To summarise, we have covered all the possibilities for the
input pair $\{x,y\}$ and verified that all the transitions from
$\{x,y\}$ through $G_{uvw}$, where $G$ is a bipartite graph,
are contained in $\mathcal{B}$. Moreover, every transition
contained in $\mathcal{B}$ occurs as a valid transition at
least once. This completes the proof.
\end{proof}

\begin{remark}\label{rem:Hw}
{\rm The inclusion $\T(G_{uvw})\subseteq\mathcal{B}$ can
actually be achieved with equality for suitable bipartite
graphs. This situation takes place, for example, for both the
complete bipartite graph $K_{3,3}$ and the Heawood graph, the
unique cubic bipartite graph of girth 6 on 14 vertices
\cite{FC}. In the former case, verification can be easily done
directly, but for the Heawood graph a computer was necessary.

It may be interesting to mention that if the shapes $\dc$ and
$\dm$ are distinguished, then the transition \mbox{$\dc\overset
2\to\ls$} does not exist for $K_{3,3}$, while
\mbox{$\dm\overset 2\to\ls$} does. On the other hand, the
Heawood graph admits both transitions \mbox{$\dc\overset
2\to\ls$} and \mbox{$\dm\overset 2\to\ls$}. In some cases,
distinguishing between $\dc$ and $\dm$ does make sense and can be used 
for the construction of graphs with perfect matching index at least $5$.} \qed
\end{remark}

\section{Composing dipoles and their transitions}
\label{sec:composing}

In order to be able to construct rich families of graphs with
perfect marching index at least $5$ we will employ several
operations on dipoles as well as on their transition relations.
The definitions come with no surprise, but we include them in
order to avoid ambiguity.

Given an $(2,2;d_1)$-pole $M_1$ and an  $(2,2;d_2)$-pole $M_2$,
we can construct an $(2,2;d_1+d_2)$-pole $M_1\circ M_2$, the
\textit{join} of  $M_1$ and $M_2$, by joining the output
connector of $M_1$ with the input connector of $M_2$. The input
and the output of $M_1\circ M_2$ are inherited from $M_1$ and
$M_2$, respectively. The join is clearly associative, which
means that $(M_1\circ M_2)\circ M_3=M_1\circ (M_2\circ M_3)$.
If both $M_1$ and $M_2$ are $(2,2)$-poles, we will also refer
to $M_1\circ M_2$ as their \textit{composition}.

Given two $(2,2;1)$-poles $M_1$ and $M_2$, we can construct a
new $(2,2;1)$-pole $M_1\odot M_2$, the \textit{composition} of
$M_1$ and $M_2$, by attaching the two residual semiedges of
$M_1\circ M_2$ to a new vertex, say $v$, and by adding a new
dangling edge incident with $v$ which will become the residual
edge of $M_1\odot M_2$.

Technically speaking, the dipoles $M_1\circ M_2$ and $M_1\odot
M_2$ are not uniquely determined by $M_1$ and $M_2$ as they
depend on the ordering of semiedges involved in the operation.
However, all our subsequent
statements concerning $M_1\circ M_2$ or $M_1\odot M_2$ will
equally hold for both possible outcomes of either of these
operations.

Similarly to dipoles, we can also compose their transition
relations. As expected, unweighted transitions
$\mathtt{p}\to\mathtt{s}$ and $\mathtt{s}\to\mathtt{t}$ of
dipoles $M_1$ and $M_2$, respectively, give rise to the
transition $\mathtt{p}\to\mathtt{t}$ of $M_1\circ M_2$.
Conversely, a transition $\mathtt{p}\to\mathtt{q}$ through
$M_1\circ M_2$ occurs only when there exist transitions
$\mathtt{p}\to\mathtt{s}$ through $M_1$ and
$\mathtt{s}\to\mathtt{t}$ through $M_2$ for a suitable shape
$\mathtt{s}\in\Sh$. If exactly one of the transitions
$\mathtt{p}\to\mathtt{s}$ and $\mathtt{s}\to\mathtt{t}$ is
weighted, say $\mathtt{s}\overset i\to\mathtt{t}$, then its
weight is inherited into the resulting transition
$\mathtt{p}\overset i\to\mathtt{t}$. If both tran\-sitions are
weighted, say $\mathtt{p}\overset i\to\mathtt{s}$ and
$\mathtt{s}\overset j\to\mathtt{t}$, then the composite
transition $\mathtt{p}\overset k\to\mathtt{s}$ through
$M_1\odot M_2$ is defined if and only $i+j\le 3$, in which case
its weight is $k=3-ij$. The last rule is a consequence of the
fact that each vertex of a $(2,2;1)$-pole carrying a $T$-flow
is incident with two edges with value of weight $1$ and one
edge with value of weight $2$. The same rules apply not just to
transition relations of dipoles but also to arbitrary sets of
transitions. So, if $\T(M_1)\subseteq\mathcal{R}_1$ and
$\T(M_2)\subseteq\mathcal{R}_2$ we can conclude that
$\T(M_1\circ M_2)\subseteq\mathcal{R}_1\circ\mathcal{R}_2$ or
$\T(M_1\odot M_2)\subseteq\mathcal{R}_1\odot\mathcal{R}_2$,
depending on which composition operation applies to $M_1$
and~$M_2$.

\medskip

The next two lemmas are easy but useful.

\begin{lemma}\label{lm:trans-composition}
The following statements hold:
\begin{itemize}
 \item[{\rm (i)}] $\T(M_1\circ M_2)=\T(M_1)\circ
 \T(M_2)$ for any two $(2,2)$-poles $M_1$ and $M_2$.
 \item[{\rm (ii)}] $\T(M_1\odot M_2)=\T(M_1)\odot
 \T(M_2)$ for any two $(2,2;1)$-poles $M_1$ and $M_2$.
\end{itemize}
\end{lemma}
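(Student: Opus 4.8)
The plan is to prove both parts by establishing two inclusions in each case. The general framework set up in the preceding discussion already guarantees one direction: the composition rules for transition relations were defined precisely so that whenever $M_1$ admits a transition $\mathtt{p}\to\mathtt{s}$ and $M_2$ admits $\mathtt{s}\to\mathtt{t}$, the join or composition admits the corresponding composite transition. Thus the inclusions $\T(M_1)\circ\T(M_2)\subseteq\T(M_1\circ M_2)$ and $\T(M_1)\odot\T(M_2)\subseteq\T(M_1\odot M_2)$ are essentially built into the definitions, but I would spell this out concretely by exhibiting flows: given a $T$-flow $\phi_1$ on $M_1$ realising $\{x,y\}\to\{w,w'\}$ and a $T$-flow $\phi_2$ on $M_2$ realising $\{w,w'\}\to\{x',y'\}$ with matching values on the joined semiedges, I would check that $\phi_1$ and $\phi_2$ agree on the identified semiedges (up to reordering the two possible junctions, which is why the remark about ordering was included) and hence glue to a single $T$-flow on the join. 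Kirchhoff's law at the junctions is automatically satisfied because the flow values coincide there.

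For part (i), the reverse inclusion $\T(M_1\circ M_2)\subseteq\T(M_1)\circ\T(M_2)$ is what gives equality. Here I would take an arbitrary $T$-flow $\phi$ on $M_1\circ M_2$ realising a transition $\{x,y\}\to\{x',y'\}$ and simply restrict it to the two pieces $M_1$ and $M_2$. The restriction $\phi|_{M_1}$ is a $T$-flow on $M_1$ (Kirchhoff's law holds at every vertex of $M_1$ since these vertices are untouched by the junction operation), and it realises a transition $\{x,y\}\to\{w,w'\}$ where $\{w,w'\}$ is the set of values on the junction semiedges; likewise $\phi|_{M_2}$ realises $\{w,w'\}\to\{x',y'\}$. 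Composing these two transitions via the rule yields exactly $\{x,y\}\to\{x',y'\}$, so the transition lies in $\T(M_1)\circ\T(M_2)$.

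For part (ii), the same restriction argument applies, with one extra ingredient: the composition $M_1\odot M_2$ introduces a new vertex $v$ joining the two former residual semiedges and carrying a new residual edge. A $T$-flow $\phi$ on $M_1\odot M_2$ restricts to $T$-flows on $M_1$ and $M_2$ whose residual values, say of weights $i$ and $j$, are two of the three flow values meeting at $v$; since the three values at $v$ form a line of $T$, exactly one of them has weight $2$ and two have weight $1$, forcing $i+j\le 3$ and making the residual value of $M_1\odot M_2$ have weight $k=3-ij$. This is precisely the weight-composition rule, so the realised weighted transition lies in $\T(M_1)\odot\T(M_2)$, and conversely gluing two compatible weighted $T$-flows at $v$ recovers the line condition at $v$ automatically.

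I do not expect a genuine obstacle here, since both statements are, as the authors flag, ``easy but useful'' bookkeeping. The only point requiring mild care is the verification that the weight arithmetic at the new vertex $v$ in part (ii) is forced rather than merely permitted --- that is, confirming that the three values meeting at $v$ necessarily comprise one weight-$2$ and two weight-$1$ points (a fact already recorded in the paragraph defining $\odot$), so that the relation $k=3-ij$ is the unique possibility. Everything else reduces to observing that restricting a flow to a sub-multipole preserves Kirchhoff's law away from the junction and that flows on the pieces with coinciding junction values glue back together.
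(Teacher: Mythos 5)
The paper gives no proof of this lemma at all (it is introduced only as ``easy but useful''), so there is nothing to compare against line by line. Your overall strategy --- restrict a flow on the composite to the two pieces for one inclusion, glue flows on the pieces for the other --- is certainly the intended one, and the inclusion $\T(M_1\circ M_2)\subseteq\T(M_1)\circ\T(M_2)$ is handled correctly: Kirchhoff's law survives restriction, and the two restricted flows share a common middle pair by construction, so their shape transitions compose.

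The gap is in the reverse inclusion. The relation $\T(M_i)$ is a relation on the set $\Sh$ of \emph{shapes}, and $\T(M_1)\circ\T(M_2)$ is relational composition at that level. From $\mathtt{p}\to\mathtt{s}\in\T(M_1)$ and $\mathtt{s}\to\mathtt{t}\in\T(M_2)$ you therefore obtain a $T$-flow $\phi_1$ on $M_1$ whose output pair has shape $\mathtt{s}$ and a $T$-flow $\phi_2$ on $M_2$ whose input pair has shape $\mathtt{s}$, but these are in general two \emph{different} pairs of points of $T$; your phrase ``with matching values on the joined semiedges'' presupposes exactly what must be arranged. The repair is the very reason shapes were defined via collineations: since the two pairs have the same shape, there is a collineation $\Theta$ of $PG(3,2)$ preserving $T$ that carries the input pair of $\phi_2$ onto the output pair of $\phi_1$, and $\Theta\phi_2$ is again a $T$-flow on $M_2$ realising a transition with the same input and output shapes; only after this transport can the flows be glued. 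Even then $\Theta$ matches the pairs only as unordered sets, and the stabiliser of $T$ permutes corner points among themselves, so it can never swap the two elements of a half-line or an altitude; for such middle pairs the semiedge-by-semiedge match may hold for only one of the two possible junction orderings, which is exactly the ambiguity the authors acknowledge in the paragraph preceding the lemma and which deserves more than your parenthetical. The same transport step is needed in part (ii); your weight bookkeeping at the new vertex (one weight-$2$ and two weight-$1$ values on a line of $T$, forcing $k=3-ij$) is correct.
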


\begin{lemma}\label{lm:trans-comp_w_stationary}
If $X$ is an arbitrary $(2,2)$-pole or $(2,2;1)$-pole and $Y$
is a stationary $(2,2)$-pole, then $\T(X\circ Y)\subseteq
\T(X)$ and $\T(Y\circ X)\subseteq \T(X)$.
\end{lemma}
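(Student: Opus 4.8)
The plan is to reduce everything to the algebra of shape transitions and to exploit that a stationary pole contributes only shape-preserving transitions. First I would record the two elementary facts that drive the argument. By definition, $Y$ being stationary means that every transition through $Y$ has the form $\mathtt{s}\to\mathtt{s}$; equivalently, $\T(Y)$ is contained in the diagonal relation $\Delta=\{\mathtt{s}\to\mathtt{s}:\mathtt{s}\in\Sh\}$. Moreover, since $Y$ is an ordinary $(2,2)$-pole, all of its transitions are unweighted.

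Next I would invoke the converse half of the composition rule stated just before Lemma~\ref{lm:trans-composition}: a transition through a join factors through the junction, giving the inclusions $\T(X\circ Y)\subseteq\T(X)\circ\T(Y)$ and $\T(Y\circ X)\subseteq\T(Y)\circ\T(X)$. This inclusion is the only structural input needed. If one prefers a self-contained justification, it is immediate at the level of flows: a $T$-flow $\phi$ realizing a transition through the composite restricts to $T$-flows on each constituent pole, and the pair of values $\{a,b\}$ carried by the junction semiedges provides the intermediate shape through which the transition factors. In the $(2,2;1)$ case the residual semiedge belongs to $X$, so its value, and hence the weight $i$, is read off from $X$ alone and is unaffected by passing through the residual-free pole $Y$.

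It then remains to check that composing with a sub-diagonal relation adds nothing. Suppose $\mathtt{p}\overset{i}{\to}\mathtt{q}\in\T(X\circ Y)\subseteq\T(X)\circ\T(Y)$, writing the weight only in the $(2,2;1)$ case. By the factorization there is a shape $\mathtt{s}$ with $\mathtt{p}\overset{i}{\to}\mathtt{s}\in\T(X)$ and $\mathtt{s}\to\mathtt{q}\in\T(Y)$. Since $\T(Y)\subseteq\Delta$, the second relation forces $\mathtt{s}=\mathtt{q}$, whence $\mathtt{p}\overset{i}{\to}\mathtt{q}=\mathtt{p}\overset{i}{\to}\mathtt{s}\in\T(X)$; the weight is inherited intact because $Y$ supplies an unweighted transition. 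This gives $\T(X\circ Y)\subseteq\T(X)$. The case $Y\circ X$ is entirely symmetric: a factorization now has its first arrow in $\T(Y)\subseteq\Delta$, forcing the intermediate shape to coincide with the input shape and placing the whole transition in $\T(X)$.

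I expect no serious obstacle; the lemma is essentially bookkeeping. The only points demanding care are (a) correctly propagating the weight in the $(2,2;1)$ case, which is painless precisely because the residual semiedge lies in $X$ while $Y$ carries no residual value, and (b) using only the inclusion $\T(X\circ Y)\subseteq\T(X)\circ\T(Y)$ rather than the equality of Lemma~\ref{lm:trans-composition}, since that equality was stated only for pairs of poles of the same type, whereas here $X$ may be a $(2,2;1)$-pole joined to a $(2,2)$-pole $Y$.
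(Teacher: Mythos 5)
Your argument is correct and is precisely the ``easy'' bookkeeping the paper has in mind: the paper states this lemma without proof, and the intended justification is exactly your factorisation of a $T$-flow on the composite through the junction pair, with stationarity of $Y$ forcing the intermediate shape to equal the adjacent endpoint shape. Your two points of care --- using only the inclusion $\T(X\circ Y)\subseteq\T(X)\circ\T(Y)$ and noting that the weight lives entirely in $X$ since $Y$ has no residual semiedge --- are exactly the right ones.
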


Composition of dipoles can be conveniently utilised in
constructing dipoles with various useful transition properties.
Preventing collinear transitions, which is defining property of
decollineators, is one of them. Another important type of a
dipole is $(2,2)$-pole which does not permit any transition of
the form $\ang\to\ang$. We call it a \textit{deangulator}. The
smallest example of a deangulator is the $(2,2)$-pole of the
form $G_{uv}$ obtained from $G=K_4$, the complete graph on four
vertices, where $uv$ is any edge of $K_4$. Another example can
be constructed from the Petersen graph by severing two edges at
distance~$2$ and forming each connector from the half-edges of
the same edge. Restricting the consideration to the weights of
points it is easy to verify that neither of these dipoles has a
transition $\{x,y\}\to\{x',y'\}$ where $|x|=|y|=|x'|=|y'|=2$.
Consequently, no transition of the form $\ang\to\ang$ through
them is possible.

\medskip

The next proposition can be applied to the construction of new
decollineators and thereby, having in mind
Theorem~\ref{thm:char-decol}~(iii), to the construction of new
cubic graphs with $\pi(G)\ge 5$.

\begin{proposition}\label{prop:2decol}
If $D_1$ and $D_2$ are decollineators and $U_1$ and $U_2$ are
deangulators, then $D_1\circ U_i\circ D_2$ is a decollineator
and $U_1\circ D_i\circ U_2$ is a deangulator for each
$i\in\{1,2\}$.
\end{proposition}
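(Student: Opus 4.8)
The plan is to reduce Proposition~\ref{prop:2decol} to purely combinatorial facts about composing transition relations, using the characterisations already established. By Theorem~\ref{thm:char-decol}, a $(2,2)$-pole is a decollineator if and only if it admits no transition of the form $\ls\to\ls$ or $\hl\to\hl$; equivalently, by Corollary~\ref{cor:D}, its transition relation is contained in $\mathcal{D}$. Likewise, a deangulator is by definition a $(2,2)$-pole with no transition $\ang\to\ang$. By Lemma~\ref{lm:trans-composition}~(i), the transition relation of a composition of $(2,2)$-poles is the composition of their transition relations, so the entire statement becomes a claim about which composite transitions can arise. Concretely, I would first record that a deangulator $U$ has $\T(U)\subseteq\mathcal{A}\setminus\{\ang\to\ang\}$ (it is still a $(2,2)$-pole, so Theorem~\ref{thm:admiss-trans} applies), and that a decollineator $D$ has $\T(D)\subseteq\mathcal{D}$.

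The core of the argument is then a finite bookkeeping over the admissible composite transitions. For the first assertion, I would show that $D_1\circ U_i\circ D_2$ admits no $\ls\to\ls$ and no $\hl\to\hl$ transition, so that Theorem~\ref{thm:char-decol}~(ii)$\Rightarrow$(i) identifies it as a decollineator. The key observations are these: an $\ls\to\ls$ transition through the composite would require a chain of shapes passing through $U_i$ that starts and ends at $\ls$; since by Remark~\ref{rem:admiss}~(ii) any $\ls\to\ls$ transition through a single $(2,2)$-pole forces $\{x,y\}=\{x',y'\}$, and since $D_1,D_2\in\mathcal{D}$ cannot begin or end with $\ls\to\ls$ or $\hl\to\hl$, the only way to connect $\ls$ endpoints is by passing through $\ang$ at the gadget $U_i$, which is precisely an $\ang\to\ang$ transition that $U_i$ forbids. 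A similar analysis applied to $\hl$ uses the fact that $\hl\to\hl$ is the only admissible transition with trace of weight $1$ (Theorem~\ref{thm:admiss-trans}), so a half-line shape propagates unchanged through every factor and can never be destroyed by the decollineators $D_1,D_2$, making a $\hl\to\hl$ transition through $D_1\circ U_i\circ D_2$ impossible. For the second assertion I would dually verify that $U_1\circ D_i\circ U_2$ admits no $\ang\to\ang$ transition: the decollineator $D_i$ in the middle cannot support $\ang\to\ang$ because $\ang\to\ang\in\mathcal{D}$ is permitted for decollineators, so the obstruction must come from how the angle shape can pass through $D_i$, and I would track the possible shape chains $\ang\to\mathtt{s}\to\mathtt{t}\to\ang$ using the tables $\mathcal{A}$ and $\mathcal{D}$.

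I anticipate the main obstacle lies in the second assertion, where the decollineator sandwiched between two deangulators does permit $\ang\to\ang$ internally. Here the argument cannot merely delete a single forbidden transition; instead I must trace all admissible shape chains $\ang\to\cdots\to\ang$ through $U_1\circ D_i\circ U_2$ and show each one is blocked. The trace-weight invariant from Kirchhoff's law is the decisive tool: a transition $\ang\to\ang$ has trace of weight $2$, and by Theorem~\ref{thm:admiss-trans} every intermediate shape in such a chain must also have trace of weight $2$, restricting the interior shapes to $\{\ls,\ang\}$ at each junction. The two outer deangulators forbid $\ang\to\ang$, forcing the interior to involve $\ls$ at the boundary of each $U_j$, so the chain must read $\ang\to\ls$ at the first junction and $\ls\to\ang$ at the last, leaving $D_i$ to realise $\ls\to\ls$; but Remark~\ref{rem:admiss}~(ii) forces such a transition to be the identity, and combined with Remark~\ref{rem:admiss}~(iii), which pins down exactly which segment an $\ang\to\ls$ transition produces, I would derive that the two angle shapes at the ends coincide in a way that contradicts genuine passage through the middle decollineator. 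Assembling these weight and shape constraints into a clean contradiction is the delicate step; everything else is routine table-chasing over the finite sets $\mathcal{A}$ and $\mathcal{D}$.
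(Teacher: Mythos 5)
Your overall strategy is the paper's: use Lemma~\ref{lm:trans-composition}~(i) to reduce the claim to chasing shape chains through the finite tables $\mathcal{A}$ and $\mathcal{D}$, with Theorem~\ref{thm:char-decol}~(ii) as the criterion for being a decollineator. Your treatment of $D_1\circ U_i\circ D_2$ is essentially the paper's argument: a composite $\ls\to\ls$ would force the shape chain $\ls\to\ang\to\ang\to\ls$ and hence an $\ang\to\ang$ transition through $U_i$, which is forbidden; and no composite transition can begin at $\hl$ because $D_1$ itself admits none (the only admissible transition out of $\hl$ is $\hl\to\hl$, which a decollineator lacks). Note, though, that your phrasing that the half-line ``propagates unchanged and can never be destroyed, making $\hl\to\hl$ impossible'' gives the right conclusion for the wrong reason: the chain is blocked at $D_1$, not carried through it.

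The genuine gap is in the second assertion. You correctly reduce a putative $\ang\to\ang$ transition through $U_1\circ D_i\circ U_2$ to the chain $\ang\to\ls\to\ls\to\ang$, so that $D_i$ would have to realise $\ls\to\ls$. At that point the proof is finished: $D_i$ is a decollineator, and by Theorem~\ref{thm:char-decol}~(ii) (equivalently, $\T(D_i)\subseteq\mathcal{D}$ and $\mathcal{D}$ contains no $\ls\to\ls$) it admits no such transition. Instead you declare this the ``delicate step'' and propose to extract a contradiction from Remark~\ref{rem:admiss}~(ii) and~(iii), arguing that the two end angles would be forced to coincide in a way that ``contradicts genuine passage through the middle decollineator.'' That route does not close: if a middle dipole could realise the identity transition $\ls\to\ls$ on some segment $\{c_2,c_3\}$, the outer transitions $\ang\to\ls$ and $\ls\to\ang$ would merely be pinned to angles of triangles containing that segment, and the resulting composite $\ang\to\ang$ would be perfectly consistent with Kirchhoff's law and with Remark~\ref{rem:admiss}~(iv); there is no geometric contradiction to be had. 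The obstruction is simply that decollineators forbid $\ls\to\ls$ --- a fact you quoted at the outset but did not deploy at the one place where it is needed. The fix is one line, but as written the second half of your argument would not close.
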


\begin{proof}
By Theorem~\ref{thm:char-decol} it suffices to prove that the
$(2,2)$-pole $D=D_1\circ U_i\circ D_2$ has no transitions of
the form $\hl\to\hl$ and $\ls\to\ls$. The former transition is
immediately excluded by Theorem~\ref{thm:admiss-trans}. In
order to exclude the latter, consider an arbitrary transition
through $D$ starting with a line segment. Applying the
assumptions and Theorem~\ref{thm:admiss-trans} to $D_1$, $U$,
and $D_2$ we conclude that the only feasible sequence of
transitions through them is $\ls\to\ang\to\ls\to\ang$, where
the first transition is through $D_1$, the second is through
$U_i$, and the third is through $D_2$. The resulting transition
through $D$ is $\ls\to\ang$, which proves that $D$ has no
transition $\ls\to\ls$ as well. The proof for $U_1\circ
D_i\circ U_2$ is similar.
\end{proof}

\begin{figure}[htbp]
\begin{center}
     \scalebox{0.45}
     {
       \input{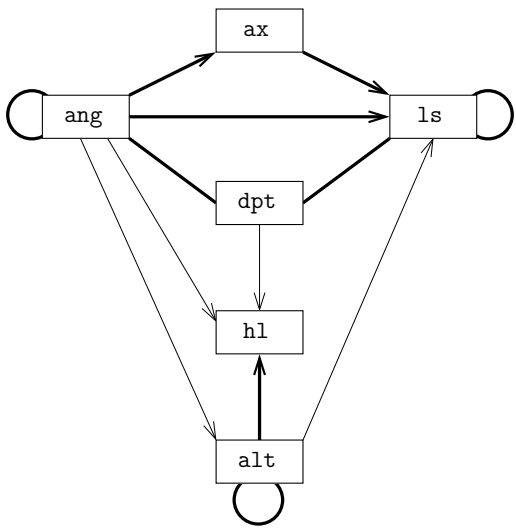}
     }
\end{center}
\caption{The set of weighted transitions $\mathcal{D}\circ\mathcal{B}$}
\label{fig:transDB}
\end{figure}

\begin{example}\label{ex:windmill}{\rm
We finish this section by exploring the transition relation of
a Halin fragment. Recall that a Halin fragment is a
$(2,2;1)$-pole of the form $F=D\circ B$ where $D$ is a
decollineator, which can be taken to be a $(2,2)$-pole $G_{uv}$
obtained from a cubic graph $G$ with $\pi(G)\ge 5$ by removing
a path $uv$ of length $1$, and $B$ is a $(2,2;1)$-pole
$K_{uwv}$ arising from a cubic bipartite graph $K$ by removing
a path $uwv$ of length~$2$. Clearly, $\T(D\circ
B)\subseteq\mathcal{D}\circ \mathcal{B}$. If $D$ is a
decollineator constructed from the Petersen graph and the
$(2,2;1)$-pole $B$ is created from the complete bipartite graph
$K_{3,3}$, then the resulting Halin fragment coincides with the
Petersen fragment  $F_{Ps}$, the basic building block of
treelike snarks. It can be shown that
$\T(F_{Ps})=\mathcal{D}\circ \mathcal{B}$.

With this information we can easily generalise the family of
windmill snarks intro\-duced in \cite{EM}. Take three  Halin
fragments $F_1$, $F_2$, and $F_3$ and form a cubic graph
$W=W(F_1,F_2,F_3)$ by first creating their composition
$F_1\circ F_2\circ F_3$, then attaching the three residual
semiedges to a new vertex, and finally by taking a closure of
the resulting $(2,2)$-pole. We claim that $\pi(W)\ge 5$. If $W$
had a covering with four perfect matchings, then the
corresponding $T$-flow would induce a directed closed walk of
length $3$ in the diagram of the transition relation
$\mathcal{D}\circ\mathcal{B}$ with the property that exactly
one of the transitions has weight $2$. A straightforward check
with the help of Figure~\ref{fig:transDB} reveals that such
a closed walk does not exist. Therefore $\pi(W)\ge 5$, as
claimed.

If the bipartite constituent $B_i$ of each of the three Halin
fragments $F_i=D_i\circ B_i$  is constructed from $K_{3,3}$,
then $W$  coincides with a windmill snark of Esperet and
Mazzuoccolo \cite{EM}. Moreover, if we choose each $F_i$ to be
the Petersen fragment, then $W$ becomes isomorphic to the snark
shown in Figure~\ref{fig:g34+46}~(left). It is therefore
appropriate to call the family of graphs of the form
$W(F_1,F_2,F_3)$, where $F_1$, $F_2$, and $F_3$ are arbitrary
Halin fragments, the \textit{generalised windmill snarks}. }
\end{example}

\section{Halin snarks}\label{sec:Halin}

In this section we put to use the knowledge gathered in the
previous sections and prove that Halin snarks cannot be covered
with four perfect matchings. We also prove that they are
nontrivial snarks whenever the building blocks employed for
their construction originate from cyclically $4$-edge-connected
graphs of girth at least $5$.

Recall that a Halin snark arises from a Halin graph $H=C\cup
S$, where $C$ is the perimeter circuit and $S$ is the inscribed
tree, by substituting the vertices of $C$ with Halin fragments.
As mentioned earlier, a Halin fragment is a $(2,2;1)$-pole
$F=D\circ B$ where $D$ is a decollineator and $B$ is a
bipartite $(2,2;1)$-pole. Let $v_0,v_1,\dots, v_{k-1}$ be the
vertices of $C$ arranged in a cyclic order, and let
$F_i=D_i\circ B_i$, with $i\in\{0, 1, 2,\ldots, k-1\}$, be a
Halin fragment that substitutes the vertex $v_i$ in the
construction. We now form a \textit{Halin snark}~$H^{\sharp}$
by, first, performing a junction of the output connector of
$F_i$ with the input connector of $F_{i+1}$ for each
$i\in\{0,1,\ldots, k-1\}$ taken modulo $k$; then, by adding in
the tree $S'=S-\{v_0,\ldots,v_{k-1}\}$; and, finally, by
attaching the residual semiedge of each $F_i$ to the unique
vertex $v_i'$ of $S'$ adjacent in $S$ to $v_i$. For
convenience, we identify the vertex of $F_i$ adjacent to $v_i'$
with $v_i$. As a result, $S$ is inherited from $H$ to $H^{\sharp}$
in its entirety, and so we can write $H^{\sharp}=C^{\sharp}\cup
S$ where $C^{\sharp}$ is a subgraph obtained from $H^{\sharp}$
by the removal of all $3$-valent vertices of $S$.

It is not difficult to show that every Halin snark $H^{\sharp}$
is indeed a snark, meaning that it admits no proper
$3$-edge-colouring. One can argue, for example, by using the
fact that in each Halin fragment $D_i\circ B_i$ the $(2,2)$-pole $D_i$ is
isochromatic \cite[p.~13]{ChS}. Therefore, for every $3$-edge-colouring 
of $H^{\sharp}$ -- which can be taken a nowhere-zero 
$\mathbb{Z}_2\times\mathbb{Z}_2$-flow on $H^{\sharp}$ -- the Kirchhoff law 
fails at each vertex $v_i$, forcing the zero value on the edge $v_iv_i'$ 
that connects $B_i$ to $S$. We omit the
details because in Theorem~\ref{thm:Halin(2,2;1)} we prove a
stronger result, namely that $\pi(H^{\sharp})\ge 5$.

According to our definition, Halin snarks need not be
nontrivial in general. The next proposition shows that their
nontriviality can easily be achieved if the building blocks are
by properly choosen.

\begin{proposition}\label{prop:nontriv-Halin}
If $H^{\sharp}$ is a Halin snark such that each Halin fragment
$F_i=D_i\circ B_i$ has both the decollineator $D_i$ and the
bipartite $(2,2;1)$-pole $B_i$ constructed from a cyclically
$4$-edge-connected cubic graph of girth at least $5$, then
$H^{\sharp}$ is cyclically $4$-edge-connected, with girth at
least $5$.
\end{proposition}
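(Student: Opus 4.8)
The plan is to prove connectivity and girth separately, in each case reducing the global property of $H^{\sharp}$ to local properties of the constituent fragments and of the underlying Halin graph $H$. The key structural observation is that every Halin fragment $F_i=D_i\circ B_i$ is built from two graphs that are individually cyclically $4$-edge-connected with girth at least $5$; removing a path of length $1$ (to form $D_i$) or length $2$ (to form $B_i$) produces semiedges, but the internal structure of each fragment inherits strong local connectivity from its source graph. A cyclic edge-cut of $H^{\sharp}$ of size at most $3$, or a short cycle, must interact with this internal structure, and I would aim to show that it either contradicts the connectivity/girth of one of the source graphs or collapses into a corresponding small cut or short cycle of the Halin graph $H$ itself, which is impossible because $H$ is $3$-connected and, being a cubic Halin graph, can be assumed to have girth at least $5$ (or the relevant short cycles can be excluded by the fragment construction).

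For the girth bound, first I would note that any cycle of $H^{\sharp}$ either lies entirely inside a single fragment $F_i$, or passes through at least one junction between fragments (equivalently, through at least one connector or through the inscribed tree $S$). A cycle confined to $F_i$ lives inside a subdivision-free copy of structure coming from a graph of girth at least $5$, so after accounting for the removed path it still has length at least $5$; here the care is that deleting two adjacent vertices or a path of length $2$ does not create a short cycle, which follows from girth at least $5$ in the source. A cycle crossing fragment boundaries must traverse at least two junction edges and hence accumulates enough edges from the interiors of two or more fragments, or from $S$, to exceed length $4$. The bookkeeping here is the routine part; the point to get right is that the welding of pairs of dangling edges between adjacent fragments does not produce a short cycle, which again reduces to the local girth of the pieces and the fact that no two welded semiedges come from vertices that were close in a source graph.

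For cyclic $4$-edge-connectivity, the plan is to assume for contradiction a cyclic edge-cut $Q$ of size at most $3$ separating $H^{\sharp}$ into two parts, each containing a cycle, and analyse how $Q$ meets the fragments and the tree $S$. Each fragment $F_i$ communicates with the rest of the graph through exactly five edges (the two pairs of welded connector edges and the single tree edge $v_iv_i'$), so a small global cut, when restricted to the edges inside or bordering a fragment, yields a small separation of the source graph $G_i$ or $G_i'$ with cyclic parts on both sides, contradicting the assumed cyclic $4$-edge-connectivity of the source. The essential case analysis is whether $Q$ isolates a piece lying inside one fragment, a piece spanning several fragments along $C^{\sharp}$, or a piece delimited by tree edges of $S$; in the last situation the cut descends to a small edge-cut of the cubic Halin graph $H$, which is $3$-connected and in which every cyclic edge-cut has size at least $4$ once the perimeter has been blown up, so no cyclic $3$-cut survives.

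The main obstacle I expect is the case in which the cut $Q$ runs along the boundaries between consecutive fragments, using the welded connector edges of $C^{\sharp}$ together with possibly one tree edge, so that neither side is contained in a single fragment and neither descends cleanly to $H$. Here one must show that a cut of size at most $3$ cannot simultaneously separate the cyclic interiors of the two fragment-chains it borders, and this is where the interplay between the five boundary edges of each fragment and the global cyclic structure is most delicate; I would handle it by pushing the cut into one of the two source graphs and invoking the fact that any edge-cut of size at most $3$ in a cyclically $4$-edge-connected cubic graph must isolate at most a single vertex or an acyclic piece, forcing one side of $Q$ in $H^{\sharp}$ to be acyclic and thereby violating the definition of a cyclic cut.
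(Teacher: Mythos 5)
Your proposal takes essentially the same route as the paper's (quite terse) proof: girth at least $5$ by direct inspection of cycles inside and across fragments, and cyclic $4$-edge-connectivity by observing that a small cycle-separating cut must either lie entirely within one multipole (contradicting the cyclic $4$-edge-connectivity of the source graph) or project to a cycle-separating cut of the $3$-connected Halin graph $H$ that crosses the perimeter circuit twice and hence has size at least $5$. The only slip is your aside that a cubic Halin graph ``can be assumed to have girth at least $5$'' --- it cannot ($K_4$ is a cubic Halin graph and is in fact used in the paper's constructions) --- but your parenthetical fallback, that short cycles of $H$ are destroyed by blowing up the perimeter vertices into fragments, is exactly the correct repair, so the argument stands.
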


\begin{proof}
Let $H^{\sharp}$ be a Halin snark satisfying the assumptions.
It is easy to see that $H^{\sharp}$ has girth at least $5$. We
claim that $H^{\sharp}$ is cyclically $4$-edge-connected.
Obviously, none of the multipoles used in the construction has
a $k$-edge-cut with $k<4$ separating a subgraph containing a
cycle from the rest of $H^{\sharp}$. Moreover, all Halin graphs
are $3$-edge-connected \cite{Halin}. Thus, every
cycle-separating edge-cut is either entirely contained in one
of the multipoles -- and then it is an $l$-cut for $l\ge 4$ --
or its mapping to the original Halin graph cuts the perimeter
circuit twice, and thus is a cycle separating $m$-edge cut with
$m\ge 5$.
\end{proof}

\begin{figure}[htbp]
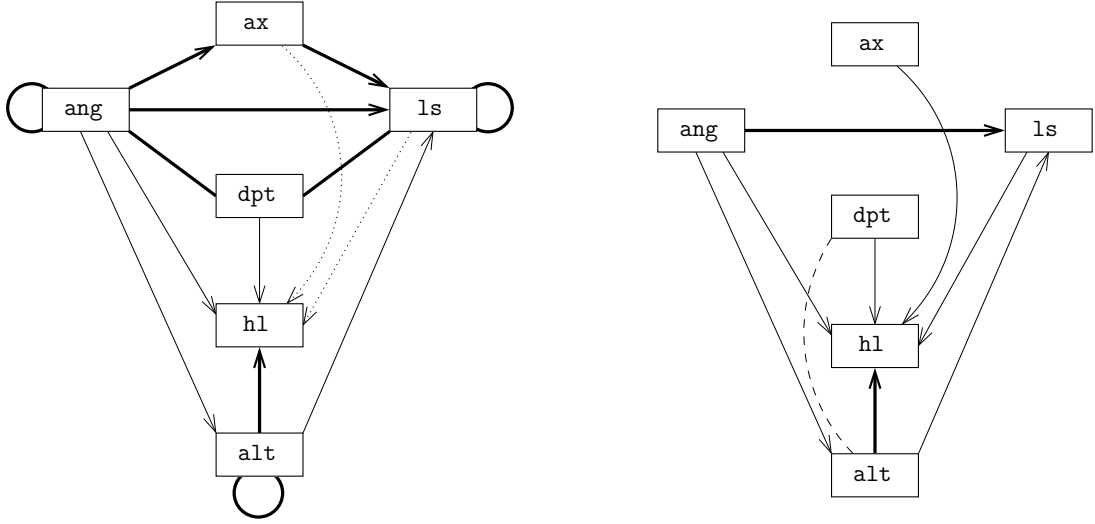

\begin{subfigure}{.5\textwidth}
  \centering
  \scalebox{0.45}{\input{prechodyM}}
\end{subfigure}
\begin{subfigure}{.5\textwidth}
  \centering
  \scalebox{0.45}{\input{prechodyMM}}
\end{subfigure}
\caption{
The transition relations $\mathcal{M}$ (left) and
$\mathcal{M}\odot\mathcal{M}$ and $\mathcal{M}'$ (right);
transitions added to $\mathcal{D}\circ\mathcal{B}$ are dotted,
those of $\mathcal{M}\odot\mathcal{M}$ that do not occur in
$\mathcal{M}'$ are dashed.} \label{fig:transM+MM}
\end{figure}

We now proceed to the proof that every Halin snark $H^{\sharp}$
has $\pi(H^{\sharp})\ge 5$. The idea is to modify the argument
of Example~\ref{ex:windmill} by engaging induction. For this
purpose we extend the weighted transition relation
$\mathcal{D}\circ\mathcal{B}$ to the set
\begin{align}
\mathcal{M}=\mathcal{D}\circ\mathcal{B}\cup
            \{\ax\overset 1\to\hl, \ls\overset 1\to\hl\}
\end{align}
whose diagram of $\mathcal{M}$ is displayed in
Figure~\ref{fig:transM+MM}~(left).

\medskip

The following property of $\mathcal{M}$ will be crucial for the
inductive proof.

\begin{proposition} \label{prop:M'}
If $M_1$ and $M_2$ are arbitrary $(2,2;1)$-poles such that
$\T(M_1)\subseteq\mathcal{M}$ and
$\T(M_2)\subseteq\mathcal{M}$, then $\T(M_1\odot
M_2)\subseteq \mathcal{M}\odot\mathcal{M}-\{\dpt\overset
1\oto\alt\}$.
\end{proposition}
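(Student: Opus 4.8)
The plan is to peel off the easy inclusion first and then concentrate all the work on the two transitions that must be deleted. By Lemma~\ref{lm:trans-composition}~(ii) we have $\T(M_1\odot M_2)=\T(M_1)\odot\T(M_2)$, and since $\odot$ is monotone with respect to inclusion of transition sets, the hypotheses $\T(M_1)\subseteq\mathcal{M}$ and $\T(M_2)\subseteq\mathcal{M}$ give $\T(M_1\odot M_2)\subseteq\mathcal{M}\odot\mathcal{M}$ at once. Everything therefore reduces to proving that neither $\dpt\overset 1\to\alt$ nor its reverse $\alt\overset 1\to\dpt$ occurs in $\T(M_1\odot M_2)$, even though both lie in $\mathcal{M}\odot\mathcal{M}$. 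I want to stress in advance that this is a genuinely geometric phenomenon: at the level of shapes both transitions factor through $\mathcal{M}$ perfectly well, so the obstruction is invisible until one descends to actual points of $PG(3,2)$.

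I would first isolate, purely combinatorially, how the two forbidden transitions can be manufactured in $\mathcal{M}\odot\mathcal{M}$. Recalling the composition rule $k=3-ij$ from Section~\ref{sec:composing}, a weight-$1$ output forces $ij=2$, i.e.\ $\{i,j\}=\{1,2\}$. Reading the out-neighbours of $\dpt$ and the in-neighbours of $\alt$ off the diagram of $\mathcal{M}$ (Figure~\ref{fig:transM+MM}, left), and using that $\hl$ is a sink of $\mathcal{M}$, which eliminates the competing routes, one checks that $\dpt\overset 1\to\alt$ can be realised only through the intermediate shape $\ang$, namely as $\dpt\overset 2\to\ang$ through $M_1$ followed by $\ang\overset 1\to\alt$ through $M_2$. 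Dually, $\alt\overset 1\to\dpt$ can be realised only through the intermediate shape $\ls$, as $\alt\overset 1\to\ls$ through $M_1$ followed by $\ls\overset 2\to\dpt$ through $M_2$.

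The core step is then to show that these unique factorisations are geometrically inconsistent at the new vertex $v$ created by $\odot$. The essential feature to exploit is that in $M_1\odot M_2$ the output of $M_1$ and the input of $M_2$ carry the same \emph{points}, not merely the same shape. Assume a $T$-flow realised $\dpt\overset 1\to\alt$; by the preceding paragraph its restrictions to $M_1$ and $M_2$ share a common junction angle $\{c_1+c_2,\,c_1+c_3\}$. Summing Kirchhoff's law over the vertices of each part, where every internal edge cancels because $x=-x$, shows that the semiedge values of each part add to $0$. Hence the residual of $M_1$ equals the midpoint $c_2+c_3$, whereas the residual of $M_2$ equals $(c_2+c_3)+s$ with $s$ the centre of the triangle carrying the output altitude; a check over the four centres shows that its only weight-$1$ values are $c_1$ and $c_4$. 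However, at $v$ these two residuals together with the fresh residual edge must form a line of $T$, and the unique line of $T$ through the midpoint $c_2+c_3$ is $\langle c_2,c_3\rangle$; thus the $M_2$-residual would have to lie in $\{c_2,c_3\}$, contradicting that it lies in $\{c_1,c_4\}$. The reverse transition $\alt\overset 1\to\dpt$ is killed by the mirror computation: there the $M_2$-residual is a midpoint $e+f$, while the $M_1$-residual, being of weight $1$ and equal to $s'+(e+f)$ for a centre $s'$, is necessarily a corner outside $\{e,f\}$, so again it cannot lie on the line $\langle e,f\rangle$ that $v$ imposes.

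I expect the main obstacle to be exactly this descent from shapes to points. The combinatorial bookkeeping, the rule $k=3-ij$ and the sink property of $\hl$, is routine and pins the factorisations down; the real subtlety is that the incompatibility leaves no trace at the shape level and has to be extracted from an explicit, if short, computation of the residual values and of the single line of $T$ passing through the weight-$2$ residual. Once that computation is carried out carefully for both orientations, the exclusion of $\dpt\overset 1\oto\alt$ follows and the proof is complete.
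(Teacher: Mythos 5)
Your proof is correct and follows essentially the same route as the paper's: the trivial inclusion via Lemma~\ref{lm:trans-composition}, the observation that each forbidden transition factors uniquely through the diagram of $\mathcal{M}$ (as $\dpt\overset 2\to\ang\overset 1\to\alt$, resp.\ $\alt\overset 1\to\ls\overset 2\to\dpt$), and a Kirchhoff computation of the two residual values that conflicts with the requirement that they lie on a common line of $T$ at the new vertex $v$. The only cosmetic difference is the order of the final contradiction --- you pin down the residual of $M_2$ from the output shape and then violate the line condition at $v$, while the paper uses the line condition at $v$ first and then contradicts the weight of the outflow --- and your explicit mirror computation for the reverse transition $\alt\overset 1\to\dpt$, which the paper dismisses as ``similar'', is also correct.
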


\begin{proof}
If $\T(M_1)\subseteq\mathcal{M}$ and
$\T(M_2)\subseteq\mathcal{M}$, then clearly $\T(M_1\odot
M_2)\subseteq\mathcal{M}\odot\mathcal{M}$; the diagram of
$\mathcal{M}\odot\mathcal{M}$ is displayed in
Figure~\ref{fig:transM+MM}~(right). We show that the
transitions $\dpt\overset 1\to\alt$ and $\alt\overset
1\to\dpt$, which occur in $\mathcal{M}\odot\mathcal{M}$ and in
the diagram are represented by a dashed line, do not belong to
$\T(M_1\odot M_2)$.

Suppose to the contrary that $M_1\odot M_2$ admits the
transition $\dpt\overset 1\to\alt$, and let $\phi$ be the
corresponding $T$-flow. Since the diagram of $\mathcal{M}$ has
a single directed path from $\dpt$ to $\alt$, namely
$\dpt\overset 2\to\ang\overset 1\to\alt$, the induced
transitions through $M_1$ and $M_2$ must be $\dpt\overset
2\to\ang$ and $\ang\overset 1\to\alt$, respectively. It follows
that the two edges joining $M_1$ to $M_2$ receive from $\phi$
the values forming an angle $\{c_1+c_2,c_1+c_3\}$, where $c_1$,
$c_2$, and $c_3$ are corner points of $T$. Since the flow
through the input connector of $M_1$ is $0$, the Kirchhoff law
tells us that the residual edge of $M_1$ receives the value
$(c_1+c_2)+(c_1+c_3)=c_2+c_3$. The value on the residual edge
of $M_2$ must therefore be either $c_2$ or $c_3$, which in turn
implies that the flow through the output connector of $M_1\odot
M_2$ will be either $(c_2+c_3)+c_2=c_3$ or $(c_2+c_3)+c_3=c_2$.
In both cases, the outflow from $M_1\odot M_2$ has weight $1$,
while an altitude has trace of weight~$3$. This contradiction
excludes the transition $\dpt\overset 1\to\alt$. The reverse
transition $\alt\overset1\to\dpt$ can be handled similarly.
\end{proof}

For brevity we set
\begin{align}
\mathcal{M}'=\mathcal{M}\odot\mathcal{M}-\{\dpt\overset 1\oto\alt\}.
\end{align}
It is easy to check that $\mathcal{M}'\subseteq\mathcal{M}$,
which immediately implies the following result.

\begin{corollary}\label{cor:MM}
If $M_1$ and $M_2$ are $(2,2;1)$-poles such that
$\T(M_1)\subseteq\mathcal{M}$ and
$\T(M_2)\subseteq\mathcal{M}$, then $\T(M_1\odot
M_2)\subseteq\mathcal{M}'\subseteq\mathcal{M}$.
\end{corollary}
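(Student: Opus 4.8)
The plan is to read the corollary off from two ingredients that are already in place: Proposition~\ref{prop:M'}, and the purely set-theoretic inclusion $\mathcal{M}'\subseteq\mathcal{M}$. Since $\mathcal{M}'$ was defined to be exactly $\mathcal{M}\odot\mathcal{M}-\{\dpt\overset 1\oto\alt\}$, the conclusion of Proposition~\ref{prop:M'} is literally the first inclusion $\T(M_1\odot M_2)\subseteq\mathcal{M}'$. Consequently the only genuine work carried by this corollary is verifying $\mathcal{M}'\subseteq\mathcal{M}$; granting that, the chain $\T(M_1\odot M_2)\subseteq\mathcal{M}'\subseteq\mathcal{M}$ is immediate and is precisely what is asserted.

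To establish $\mathcal{M}'\subseteq\mathcal{M}$ I would first compute the composite relation $\mathcal{M}\odot\mathcal{M}$ explicitly from the composition rule of Section~\ref{sec:composing}. Recall that a pair of weighted transitions $\mathtt{p}\overset i\to\mathtt{s}$ and $\mathtt{s}\overset j\to\mathtt{t}$ composes into $\mathtt{p}\overset k\to\mathtt{t}$ precisely when $i+j\le 3$, with $k=3-ij$; so the admissible weight pairs $(i,j)$ are $(1,1)$, $(1,2)$, $(2,1)$, producing outputs $2$, $1$, $1$ respectively, while $(2,2)$ is forbidden. I would run over each intermediate shape $\mathtt{s}\in\Sh$, take every pair of arrows of $\mathcal{M}$ meeting at $\mathtt{s}$ with compatible weights, record the resulting arrow $\mathtt{p}\overset{3-ij}\to\mathtt{t}$, and collect them all. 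This reproduces the diagram of $\mathcal{M}\odot\mathcal{M}$ in Figure~\ref{fig:transM+MM}~(right).

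The decisive step is then combinatorial rather than conceptual: comparing the finitely many arrows of $\mathcal{M}\odot\mathcal{M}$ with those of $\mathcal{M}$ in Figure~\ref{fig:transM+MM}~(left) and confirming that the only arrows of $\mathcal{M}\odot\mathcal{M}$ falling outside $\mathcal{M}$ are $\dpt\overset 1\to\alt$ and its reverse $\alt\overset 1\to\dpt$. As these are exactly the two arrows deleted in passing to $\mathcal{M}'$, every surviving arrow of $\mathcal{M}'$ already lies in $\mathcal{M}$, which is the desired inclusion. I expect the main obstacle to be the bookkeeping of this enumeration: one must make sure that no composite arrow besides the $\dpt\overset 1\oto\alt$ pair escapes $\mathcal{M}$, checking in particular that the newly generated weight-$2$ transitions (coming from $(i,j)=(1,1)$ concatenations) and the weight-$1$ transitions (from $(1,2)$ and $(2,1)$) all duplicate arrows already present. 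This is a finite verification over the six shapes, kept manageable by the sparsity of $\mathcal{M}$ and by the weight restrictions, and it is precisely what the dotted and dashed conventions of the figure are designed to encode.

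Finally I would assemble the pieces. The subset-composition rule of Section~\ref{sec:composing} turns $\T(M_1)\subseteq\mathcal{M}$ and $\T(M_2)\subseteq\mathcal{M}$ into $\T(M_1\odot M_2)\subseteq\mathcal{M}\odot\mathcal{M}$; Proposition~\ref{prop:M'} sharpens this to $\T(M_1\odot M_2)\subseteq\mathcal{M}'$; and the verified inclusion $\mathcal{M}'\subseteq\mathcal{M}$ completes the argument, yielding $\T(M_1\odot M_2)\subseteq\mathcal{M}'\subseteq\mathcal{M}$.
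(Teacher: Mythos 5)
Your proposal is correct and follows the paper's own route exactly: the first inclusion is just a restatement of Proposition~\ref{prop:M'} (since $\mathcal{M}'$ is by definition $\mathcal{M}\odot\mathcal{M}-\{\dpt\overset 1\oto\alt\}$), and the paper likewise disposes of $\mathcal{M}'\subseteq\mathcal{M}$ as an ``easy to check'' finite comparison of the two diagrams in Figure~\ref{fig:transM+MM}. Your explicit recomputation of $\mathcal{M}\odot\mathcal{M}$ via the weight rule $k=3-ij$ for $i+j\le 3$ merely spells out that check in more detail than the paper does.
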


We divide the proof of the fact that $\pi(H^{\sharp})\ge 5$ for
every Halin snark into three smaller steps,
Theorems~\ref{thm:Halin(2,2;1)},~\ref{thm:Halin(2,2)}, and
\ref{thm:ext-Halin(2,2)}, each dealing with a particular dipole
contained in it. All three statements have an independent value
as they can be applied in the construction of graphs with
perfect matching index at least $5$.

\begin{definition}\label{def:Halin-dip}
{\rm Let $H^{\sharp}=C^{\sharp}\cup S$ be a Halin snark, with
$H=C\cup S$ being the underlying Halin graph, $C$ being the
perimeter circuit, and $S$ the inscribed tree. A \textit{Halin
$(2,2;1)$-pole} $X$ is a $(2,2,1)$-pole created from
$H^{\sharp}$ by removing one Halin fragment $F=B\circ D$ from
$C^{\sharp}$ and keeping the dangling edges. The connectors of
$X$ are formed in a natural manner: each consists of the
semiedges joined in $H^{\sharp}$ to a connector of $F$. A
\textit{Halin $(2,2)$-pole} $Y$ is a $(2,2)$-pole created from
$H^{\sharp}$ in a similar manner with the exception that one
does not remove the entire Halin fragment $F=D\circ B$, where
$D$ is a decollineator and $B$ is a bipartite $(2,2;1)$-pole,
but removes only $D$. Finally, an \textit{extended Halin
$(2,2)$-pole} $Z$ is a $(2,2)$-pole created from $H^{\sharp}$
by severing a pair of edges that connect two consecutive Halin
fragments in $C^{\sharp}$. In all three cases the output
connector coincides with the output connector of the Halin
fragment preceding $F$ with respect to the cyclic ordering
around~$C^{\sharp}$. }
\end{definition}

\begin{theorem}\label{thm:Halin(2,2;1)}
Every Halin $(2,2;1)$-pole $X$ satisfies
$\T(X)\subseteq\mathcal{M}'$.
\end{theorem}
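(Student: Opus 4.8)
The plan is to exhibit the Halin $(2,2;1)$-pole $X$ as a build-up of the individual Halin fragments $F_i$ by repeated application of the composition $\odot$, organised along the inscribed tree, and then to propagate the bound $\mathcal{M}$ through this build-up by induction, invoking Corollary~\ref{cor:MM} at every combination step. The flow-theoretic heavy lifting has already been done; what remains is structural.

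First I would record the base fact that each Halin fragment $F_i=D_i\circ B_i$ satisfies $\T(F_i)\subseteq\mathcal{M}$. This is immediate: $\T(D_i)\subseteq\mathcal{D}$ by Corollary~\ref{cor:D} (since $D_i$ is a decollineator) and $\T(B_i)\subseteq\mathcal{B}$ by Theorem~\ref{thm:wtrans-bip} (since $B_i$ is bipartite), so $\T(F_i)\subseteq\mathcal{D}\circ\mathcal{B}\subseteq\mathcal{M}$, exactly as observed in Example~\ref{ex:windmill}.

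The core of the argument is the decomposition. Writing $X$ for the $(2,2;1)$-pole obtained by deleting the fragment $F_j$ (Definition~\ref{def:Halin-dip}), I would root the inscribed tree $S$ at the vertex $v_j'$ formerly joined to $v_j$. Since $S$ is cubic, the rooted tree is binary: every internal vertex has exactly two children and the leaves are the fragments $F_i$ with $i\neq j$. Here planarity of the underlying Halin graph is essential, as it guarantees that the leaves under any tree vertex form a \emph{contiguous} arc of the perimeter $C^{\sharp}$. Consequently one can attach to each tree vertex $w$ a $(2,2;1)$-pole $M_w$ whose two perimeter connectors are the free ends of the arc of fragments under $w$ and whose residual semiedge is the tree edge leading from $w$ up to its parent. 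I would then verify the recursion $M_w=M_{w_1}\odot M_{w_2}$ for the two children $w_1,w_2$ of $w$: joining the output of $M_{w_1}$ to the input of $M_{w_2}$ continues the perimeter, while attaching their two residuals to $w$ together with the upward edge is precisely the $\odot$ operation. In particular $X=M_{v_j'}=M_{w_1}\odot M_{w_2}$ for the two children of the root, the residual of $X$ being the dangling edge $v_jv_j'$. With this in hand the conclusion follows by induction on subtree size: the base case is the fact just recorded, and for the inductive step $\T(M_{w_1})\subseteq\mathcal{M}$ and $\T(M_{w_2})\subseteq\mathcal{M}$ give $\T(M_w)=\T(M_{w_1}\odot M_{w_2})\subseteq\mathcal{M}'\subseteq\mathcal{M}$ by Corollary~\ref{cor:MM}. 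Applying this one last time at the root yields $\T(X)\subseteq\mathcal{M}'$; this final $\odot$-step genuinely occurs because $v_j'$ always has two children, its third incident edge being the one to the removed $v_j$.

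I expect the main obstacle to be the bookkeeping in the structural step rather than the flow-theoretic content. One must argue carefully from planarity that the subtrees meet the perimeter in contiguous arcs, and that the input, output, and residual connectors match up so that the tree recursion is literally a sequence of $\odot$ operations, with the connectors of $M_{v_j'}$ coinciding with those prescribed for $X$ in Definition~\ref{def:Halin-dip}. Once this identification is secured, the propagation of the bound is entirely mechanical via Corollary~\ref{cor:MM}.
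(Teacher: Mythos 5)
Your proposal is correct and follows essentially the same route as the paper: an induction along the inscribed tree, decomposing $X$ into $\odot$-compositions of Halin fragments via the planarity-guaranteed contiguity of perimeter arcs, with Proposition~\ref{prop:M'}/Corollary~\ref{cor:MM} doing the flow-theoretic work and the base fact $\T(F_i)\subseteq\mathcal{D}\circ\mathcal{B}\subseteq\mathcal{M}$ anchoring the recursion. The only (cosmetic) difference is that you run a bottom-up recursion over the tree rooted at $v_j'$ with intermediate poles $M_w$, whereas the paper splits at the residual vertex and re-packages the two halves as smaller Halin snarks so that the induction hypothesis applies to genuine Halin $(2,2;1)$-poles.
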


\begin{proof}
Consider a Halin $(2,2;1)$-pole $X$ constructed from a Halin
snark $H^{\sharp}=C^{\sharp}\cup S$ by removing one Halin
fragment. Let $H=C\cup S$ be the underlying Halin graph.
Throughout the proof we refer to the notation introduced in the
beginning of this section.

We prove the result by induction on the number of trivalent
vertices of the inscribed tree $S$ of~$H$. If $S$ has only one
trivalent vertex, then, up to a cyclic shift of indices,
$X=F_0\odot F_1$ where $F_0$ and $F_1$ are Halin fragments.
Since $\T(F_0)\subseteq\mathcal{M}$ and
$\T(F_1)\subseteq\mathcal{M}$, Proposition~\ref{prop:M'}
implies that $\T(X)=\T(F_0\odot F_1)\subseteq\mathcal{M}'$.
This establishes the basis of induction.

For the induction step consider an arbitrary Halin
$(2,2;1)$-pole $X$ where the inscribed tree has $k\ge 2$
trivalent vertices, and assume that the result is true for all
Halin $(2,2;1)$-poles whose inscribed trees have fewer than $k$
trivalent vertices. The perimeter circuit of $H$ has $k+2$
vertices $v_0,v_1,\ldots, v_{k+1}$, which means that
$H^{\sharp}$ comprises $k+2$ Halin fragments $F_0, F_1, \ldots,
F_k, F_{k+1}$. Without loss of generality we may assume that
$X$ is constructed by removing $F_{k+1}$. Let $w$ be the vertex
of $X$ incident with its residual semiedge, and let $w_1$ and
$w_2$ be the neighbours of $w$ in $X$. For $i\in\{1,2\}$ let
$S_i$ denote the component of $S-ww_i$ containing the vertex
$w_i$, and let $S_i'$ be the tree obtained from $S_i$ by adding
to it the vertex $w$ and the edge $ww_i$. Recall that $H$ is
planar with $S$ being a plane tree. Therefore there exists an
index $r$ with $0\le r < k$ such that the vertices $v_0,
\ldots, v_r$ are the leaves of $S_1$ and the vertices $v_{r+1},
\ldots, v_{k}$ are the leaves of $S_2$. Now we can form two
smaller Halin graphs $H_1=C_1\cup S_1'$ and $H_2=C_2\cup S_2'$,
where $C_1$ is  the circuit that runs through the vertices
$v_0, \ldots, v_r$, and~$w$, while $C_2$ is the circuit that
runs through the vertices $v_{r+1}, \ldots, v_{k}$, and~$w$.
From these Halin graphs we can build Halin snarks
$H_1^{\sharp}$ and $H_2^{\sharp}$ where $w$ is substituted
(say) by $F_{k+1}$ and the remaining perimeter vertices are
substituted by the same Halin fragments as in~$H^{\sharp}$. By
removing $F_{k+1}$ from both $H_1^{\sharp}$ and  $H_2^{\sharp}$
we obtain Halin $(2,2;1)$-poles $X_1$ and $X_2$, respectively.
It is easy to see that $X_1\odot X_2$ is isomorphic to $X$. The
induction hypothesis implies that
$\T(X_1)\subseteq\mathcal{M}'$ and
$\T(X_2)\subseteq\mathcal{M}'$. Since
$\mathcal{M}'\subseteq\mathcal{M}$, from
Proposition~\ref{prop:M'} eventually get $\T(X)=\T(X_1\circ
X_2)\subseteq\mathcal{M}'$. This concludes the induction step
as well as the entire proof.
\end{proof}

\begin{theorem}\label{thm:Halin(2,2)}
Every transition through a Halin $(2,2)$-pole has the form
$$\ls\to\ls \quad {or } \quad \hl\to\hl.$$
\end{theorem}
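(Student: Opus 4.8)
The plan is to realise the Halin $(2,2)$-pole $Y$ (as in Definition~\ref{def:Halin-dip}) as a composition of two dipoles whose transition relations are already under control, and then to read off the admissible transitions from the corresponding diagrams. Writing the deleted Halin fragment as $F=D\circ B$, the pole $Y=H^{\sharp}-D$ is obtained by reinserting the bipartite block $B$ into the Halin $(2,2;1)$-pole $X=H^{\sharp}-F$. Concretely, the input connector of $Y$ is the input connector of $B$; the output connector of $B$ is joined to the input connector of $X$; the residual semiedge of $B$ is joined to the residual semiedge of $X$ (this junction recreates the tree edge $v_iv_i'$); and the output connector of $X$ is the output connector of $Y$. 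By Theorem~\ref{thm:wtrans-bip} the bipartite block satisfies $\T(B)\subseteq\mathcal{B}$, and by Theorem~\ref{thm:Halin(2,2;1)} we have $\T(X)\subseteq\mathcal{M}'$.

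Next I would describe how transitions compose across this junction. Any $T$-flow on $Y$ restricts to $T$-flows on $B$ and on $X$ that agree on the two shared edges. In particular, the recreated tree edge carries a single value $z$, so the residual value of the induced transition through $B$ equals that through $X$; consequently their residual weights coincide. Hence every transition $\mathtt{s}\to\mathtt{t}$ through $Y$ factors as a weighted transition $\mathtt{s}\overset{i}{\to}\mathtt{r}$ lying in $\mathcal{B}$ followed by a weighted transition $\mathtt{r}\overset{i}{\to}\mathtt{t}$ lying in $\mathcal{M}'$, with the \emph{same} weight $i$ and some intermediate shape $\mathtt{r}\in\Sh$. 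This is a one-sided inclusion: matching residual weights does not force matching residual \emph{values}, so the resulting set of shape transitions only over-approximates $\T(Y)$.

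The core of the proof is then a short inspection of the diagrams of $\mathcal{B}$ (Figure~\ref{fig:transB}) and of $\mathcal{M}'$ (Figure~\ref{fig:transM+MM}). The decisive feature of $\mathcal{M}'$ is that the only transitions leaving $\dpt$, $\ls$, or $\ax$ are the weight-$1$ edges to $\hl$, so no weight-$2$ transition starts at these shapes. Running through the six possible input shapes, the only surviving composites are $\ls\to\ls$ (through the middle shape $\ang$ at weight $2$, or $\alt$ at weight $1$) and $\hl\to\hl$ (through $\alt$ at weight $2$, or through $\dpt$ or $\ls$ at weight $1$), together with the two spurious composites $\dpt\to\ls$ and $\alt\to\hl$. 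The spurious ones are removed by Kirchhoff's law: a transition $\{x,y\}\to\{x',y'\}$ through a $(2,2)$-pole satisfies $x+y=x'+y'$, whereas $\dpt$ forces trace $0$ against the midpoint trace of $\ls$, and $\alt$ forces a weight-$3$ trace against the weight-$1$ trace of $\hl$. Thus, by Theorem~\ref{thm:admiss-trans}, neither $\dpt\to\ls$ nor $\alt\to\hl$ can occur, and the inclusion $\T(Y)\subseteq\{\ls\to\ls,\ \hl\to\hl\}$ follows.

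I expect the main obstacle to be bookkeeping rather than conceptual. The delicate point is that the shape-level composition genuinely over-counts: using $\mathcal{M}$ in place of $\mathcal{M}'$ would leave the extra transition $\dpt\to\dpt$ (via the middle shape $\ang$ at weight $2$), so it is essential to invoke the refined relation $\mathcal{M}'$, in which the weight-$2$ exits from $\dpt$, $\ls$, and $\ax$ have been suppressed. A second delicate point is to keep the weight-matching constraint at the recreated residual edge, without which the chase would admit spurious mixed-weight paths; and finally one must not omit the Kirchhoff/trace step that eliminates the two leftover transitions $\dpt\to\ls$ and $\alt\to\hl$.
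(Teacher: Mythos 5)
Your proposal is correct and follows essentially the same route as the paper: the same decomposition of $Y$ into a bipartite $(2,2;1)$-pole $B$ joined to a Halin $(2,2;1)$-pole $X$ with residual semiedges welded, the same weight-matching factorisation of transitions through $\mathcal{B}$ and $\mathcal{M}'$, and the same final filtering by admissibility (Theorem~\ref{thm:admiss-trans}), yielding exactly the five surviving composites and hence $\ls\to\ls$ or $\hl\to\hl$.
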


\begin{proof}
Let $Y$ be an arbitrary Halin $(2,2)$-pole.
Definition~\ref{def:Halin-dip} implies that $Y$ can be obtained
from a bipartite $(2,2;1)$-pole $B$ and a Halin $(2,2;1)$-pole
$X$ by taking the join $B\circ X$ and then performing the
junction of the residual semiedges of $B$ and $X$. It follows
that a transition $\mathtt{s}\to\mathtt{t}$ through $Y$ can
only exist provided that there exists a transition
$\mathtt{s}\overset i\to\mathtt{q}$ through $B$ and a
transition $\mathtt{q}\overset j\to\mathtt{t}$ through $X$ such
that $i=j$. Furthermore, by Theorem~\ref{thm:admiss-trans}, the
resulting transition $\mathtt{s}\to\mathtt{t}$ must be
admissible, which means that it belongs to the set
$\mathcal{A}$ displayed in Equation (\ref{eq:A}).
Theorem~\ref{thm:wtrans-bip} and Theorem~\ref{thm:Halin(2,2;1)}
imply that $\T(B)\subseteq\mathcal{B}$ and
$\T(X)\subseteq\mathcal{M}'$. By inspecting the relations
$\mathcal{B}$ and $\mathcal{M}'$ and excluding all results that
are not admissible we conclude that only the following five
combinations $(\mathtt{s}\overset
i\to\mathtt{q})\in\mathcal{B}$ and $(\mathtt{q}\overset
i\to\mathtt{t})\in\mathcal{M}'$ are possible:
\begin{align}
&\hl\overset 1\to\ls\overset 1\to\hl 
\quad & \ls\overset 2\to\ang\overset 2\to\ls \nonumber\\
&\hl\overset 2\to\alt\overset 2\to\hl
\quad & \ls\overset 1\to\alt\overset 1\to\ls\nonumber\\
&\hl\overset 1\to\dpt\overset 1\to\hl\nonumber
\end{align}
In all the cases we obtain either $\ls\to\ls$ or $\hl\to\hl$.
The theorem is proved.
\end{proof}

We have just proved that all transitions through a Halin
$(2,2)$-pole are collinear. A Halin $(2,2)$-pole is thus an
example of a \textit{collineator}, a $(2,2)$-pole whose
transition relation is contained in the set
$\mathcal{C}=\{\ls\to\ls, \hl\to\hl\}$ of all collinear
transitions.

\begin{remark}{\rm
Both transitions from Proposition~\ref{thm:Halin(2,2)} can
actually be achieved by the Halin $(2,2)$-pole on 26 vertices
obtained from the windmill snark of order 34 depicted in
Figure~\ref{fig:g34+46}~(left) by deleting one decollineator on
$8$ vertices.} \qed
\end{remark}

\begin{theorem}\label{thm:ext-Halin(2,2)}
Every transition through an extended Halin $(2,2)$-pole has
the form $$\ang\to\ls.$$
\end{theorem}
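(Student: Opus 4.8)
The plan is to recognize the extended Halin $(2,2)$-pole $Z$ as a composition $Z=D\circ Y$ of a decollineator $D$ with a Halin $(2,2)$-pole $Y$, and then simply to read off the possible transitions from the relations $\mathcal{D}$ and $\{\ls\to\ls,\hl\to\hl\}$ that are already at our disposal. First I would fix the cut: suppose $Z$ is produced by severing the pair of edges joining two consecutive Halin fragments $F_i$ and $F_{i+1}$ of $C^{\sharp}$, so that, following Definition~\ref{def:Halin-dip}, the output connector of $Z$ is the output connector of the preceding fragment $F_i$ and the input connector of $Z$ is the input connector of $F_{i+1}$. Writing $F_{i+1}=D_{i+1}\circ B_{i+1}$, I would peel off the leading decollineator $D_{i+1}$: removing $D_{i+1}$ from $H^{\sharp}$ leaves precisely a Halin $(2,2)$-pole $Y$ in the sense of Definition~\ref{def:Halin-dip} (its connectors being the input semiedges of $B_{i+1}$ and the output semiedges of $F_i$), and reattaching $D_{i+1}$ by joining its output connector to the input of $Y$ recovers $Z$. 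Thus $Z=D_{i+1}\circ Y$, with the decollineator sitting at the input end, so that any transition through $Z$ factors as $\mathtt{s}\to\mathtt{q}$ through $D_{i+1}$ followed by $\mathtt{q}\to\mathtt{t}$ through $Y$.

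Next I would invoke the structural facts already proved. By Corollary~\ref{cor:D} we have $\T(D_{i+1})\subseteq\mathcal{D}$, and by Theorem~\ref{thm:Halin(2,2)} every transition through $Y$ has the form $\ls\to\ls$ or $\hl\to\hl$. Lemma~\ref{lm:trans-composition}~(i) then yields $\T(Z)=\T(D_{i+1})\circ\T(Y)\subseteq\mathcal{D}\circ\{\ls\to\ls,\hl\to\hl\}$. A transition through $Y$ forces the intermediate shape $\mathtt{q}$ and the output shape $\mathtt{t}$ to coincide and to lie in $\{\ls,\hl\}$. Scanning $\mathcal{D}=\{\dpt\to\dpt,\alt\to\alt,\ax\to\ax,\ang\to\ang,\ang\to\ls,\ls\to\ang\}$, I find no transition terminating in $\hl$ and exactly one terminating in $\ls$, namely $\ang\to\ls$. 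Hence $\mathtt{q}=\mathtt{t}=\ls$ and $\mathtt{s}=\ang$, so every transition through $Z$ has the form $\ang\to\ls$, as claimed.

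The final enumeration is entirely routine; the only real care is needed in the first step. The hard part will be verifying the decomposition $Z=D_{i+1}\circ Y$ together with the correct orientation of its connectors, because it is precisely the placement of the decollineator at the \emph{input} end---dictated by the convention that the output connector of $Z$ is inherited from the preceding fragment $F_i$---that distinguishes the intended conclusion $\ang\to\ls$ from its reverse $\ls\to\ang$ (the latter being what the composition $\mathcal{C}\circ\mathcal{D}$ would produce).
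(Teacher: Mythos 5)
Your proof is correct and follows essentially the same route as the paper: decompose $Z=D\circ Y$ with the decollineator at the input end, invoke Corollary~\ref{cor:D} and Theorem~\ref{thm:Halin(2,2)}, and observe that $\ang\to\ls$ is the only transition in $\mathcal{D}$ terminating in $\ls$ while none terminates in $\hl$. The extra care you flag about the orientation of the connectors is warranted and is exactly what Definition~\ref{def:Halin-dip} encodes, so there is nothing to add.
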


\begin{proof}
Let $Z$ be an arbitrary extended Halin $(2,2)$-pole.
Definition~\ref{def:Halin-dip} implies that $Z=D\circ Y$ where
$D$ is a decollineator and  $Y$ is a Halin $(2,2)$-pole. By
using Corollary~\ref{cor:D} and Theorem~\ref{thm:Halin(2,2)} we
can conclude that a transition $\mathtt{s}\to\mathtt{t}$
through $Z$ can only exist when there exists a transition
$\mathtt{s}\to\mathtt{q}$ in $\mathcal{D}$, and
$\mathtt{q}\to\mathtt{t}$ is one of $\ls\to\ls$ and
$\hl\to\hl$. However, $\mathcal{D}$ has no transition involving
$\hl$, so $\mathtt{q}=\ls$ and hence $\mathtt{s}=\ang$ and
$\mathtt{t}=\ls$, as required.
\end{proof}

\begin{theorem}\label{thm:Halin}
Every Halin snark has perfect matching index at least $5$.
\end{theorem}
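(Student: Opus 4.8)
The plan is to translate the claim into the language of tetrahedral flows and then exploit the rigid shape constraint supplied by Theorem~\ref{thm:ext-Halin(2,2)}. By Theorem~\ref{thm:1-1}, the inequality $\pi(H^{\sharp})\ge 5$ is equivalent to the assertion that $H^{\sharp}$ admits no $T$-flow. I would therefore argue by contradiction and assume that some $T$-flow $\phi$ on $H^{\sharp}$ exists, aiming to violate the fact that every transition through an extended Halin $(2,2)$-pole is of the \emph{non-stationary} form $\ang\to\ls$.

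First I would single out an extended Halin $(2,2)$-pole $Z$. Following Definition~\ref{def:Halin-dip}, $Z$ is obtained from $H^{\sharp}$ by severing the pair of edges joining two consecutive Halin fragments $F_i$ and $F_{i+1}$ in $C^{\sharp}$; such a pair exists because the perimeter circuit of any Halin graph has at least three vertices, so $H^{\sharp}$ contains at least three Halin fragments and $Z$ remains connected. Grouping the two semiedges freed on the $F_{i+1}$-side into the input connector and the two freed on the $F_i$-side into the output connector, and ordering them so that the junction of the $i$-th output semiedge with the $i$-th input semiedge restores the severed edges, one obtains $H^{\sharp}=[Z]$.

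Next I would restrict $\phi$ to $Z$, which yields a $T$-flow on $Z$ since the Kirchhoff law at each vertex of $Z$ is inherited from $H^{\sharp}$. Because $H^{\sharp}=[Z]$, each severed edge carries a single flow value that is shared by its two halves, one lying in the input connector and the other in the output connector of $Z$. Writing these two shared values as $v_1$ and $v_2$, the input pair and the output pair of the induced transition both equal $\{v_1,v_2\}$. Hence $\phi$ induces the \emph{stationary} transition $\{v_1,v_2\}\to\{v_1,v_2\}$ through $Z$; in particular, its input and output pairs have the same shape.

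Finally I would invoke Theorem~\ref{thm:ext-Halin(2,2)}, by which every transition through $Z$ has the form $\ang\to\ls$. Since $\ang$ and $\ls$ are distinct shapes, no transition through $Z$ is stationary, contradicting the previous paragraph. Therefore $\phi$ cannot exist, $H^{\sharp}$ admits no $T$-flow, and Theorem~\ref{thm:1-1} gives $\pi(H^{\sharp})\ge 5$. The only delicate point — and the hinge of the whole argument — is the bookkeeping of the closure: one must verify that $[Z]=H^{\sharp}$ and that the identification of the severed halves forces the induced transition to be stationary. Once this is in place, the clash with the uniform non-stationary shape $\ang\to\ls$ furnished by Theorem~\ref{thm:ext-Halin(2,2)} is immediate, and no further computation is required.
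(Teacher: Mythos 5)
Your proposal is correct and follows essentially the same route as the paper: express $H^{\sharp}=[Z]$ for an extended Halin $(2,2)$-pole $Z$, observe that any tetrahedral flow on $H^{\sharp}$ would induce a stationary transition through $Z$, and contradict Theorem~\ref{thm:ext-Halin(2,2)}. The extra bookkeeping you supply about the closure and the shared flow values on the severed edges is a faithful elaboration of the same argument.
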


\begin{proof}
Every Halin snark $G$ can be expressed as $G=[Z]$ where $Z$ is
an extended Halin $(2,2)$-pole. Now, if we had $\pi(G)\le 4$,
then $G$ would have a tetrahedral flow, and hence the
$(2,2)$-pole $Z$ would admit a stationary transition. However,
Theorem~\ref{thm:ext-Halin(2,2)} shows that this is not the
case. Therefore $\pi(G)\ge 5$.
\end{proof}

The next theorem makes use of the fact that the family of Halin
snarks is quite rich.

\begin{theorem}~\label{thm:42}
For every even integer $n\ge 42$ there exists a nontrivial
snark of order $n$ with perfect matching index at least $5$.
\end{theorem}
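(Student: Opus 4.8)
The plan is to read off $\pi\ge 5$ from the general theory and to reduce the whole statement to a vertex-counting problem for nontrivial Halin snarks. By Theorem~\ref{thm:Halin} every Halin snark satisfies $\pi\ge 5$ and is in particular a snark, so the only thing I would actually have to arrange is that for each even $n\ge 42$ some Halin snark has order exactly $n$ and is cyclically $4$-edge-connected of girth at least $5$. First I would record the order of a Halin snark: if the underlying Halin graph is $H=C\cup S$ with perimeter circuit $C$ of length $k$, then $S$ has $k$ leaves and, since all its internal vertices are trivalent, exactly $k-2$ of them; the internal vertices survive in $H^{\sharp}$ while each perimeter vertex is absorbed into a fragment, so a Halin snark built from fragments $F_0,\dots,F_{k-1}$ has $\sum_i|V(F_i)|+(k-2)$ vertices.

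Next I would produce a single one-parameter family covering all the required orders. Taking $k=3$ (so $H=K_4$ with one internal vertex) yields the generalised windmill snark $W(F_0,F_1,F_2)$ of Example~\ref{ex:windmill}. For $F_0$ and $F_1$ I would use the Petersen fragment $F_{Ps}$ built from the Petersen graph and $K_{3,3}$, which has $11$ vertices, and for $F_2$ a Halin fragment built from the Petersen graph and a cyclically $4$-edge-connected cubic bipartite graph $J$ of girth $6$. Deleting two adjacent vertices from the Petersen graph leaves a decollineator on $8$ vertices and deleting a path of length $2$ from $J$ leaves a bipartite $(2,2;1)$-pole on $|V(J)|-3$ vertices, so with $|V(J)|=2m$ the resulting snark has order $11+11+(8+2m-3)+1=2m+28$. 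Cubic bipartite graphs of girth $6$ exist in every even order $2m\ge 14$ — the Heawood graph for $m=7$, and for $m>7$ graphs obtained from it by a standard two-vertex expansion preserving bipartiteness, girth and cyclic $4$-edge-connectivity — so $m$ ranges over all integers $\ge 7$ and $n=2m+28$ runs through every even integer $\ge 42$.

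It remains to confirm nontriviality, which is the heart of the argument. Perfect matching index at least $5$ is already in hand, so I would only check girth and cyclic $4$-edge-connectivity. The key structural remark is that in a Halin fragment $F=D\circ B$ both connectors are inherited from the decollineator $D$, which in all three fragments is the same Petersen decollineator; hence the perimeter junctions between consecutive fragments are literally the same as in the windmill snark of order $34$ of Figure~\ref{fig:g34+46}~(left) and introduce no cycle shorter than $5$, exactly as in the proof of Proposition~\ref{prop:nontriv-Halin} for the Petersen building block. The two copies of $K_{3,3}$ are the only blocks violating the girth hypothesis of Proposition~\ref{prop:nontriv-Halin}, but after removal of a path of length $2$ the corresponding bipartite $(2,2;1)$-pole is acyclic, so it creates no short cycle; moreover any cycle through the central vertex must use the residual edges of two distinct fragments and therefore crosses a perimeter junction, forcing length at least $5$, and replacing one $K_{3,3}$-block by the larger graph $J$ only lengthens such cycles. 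Cyclic $4$-edge-connectivity then follows exactly as in Proposition~\ref{prop:nontriv-Halin}, since the Petersen graph, $K_{3,3}$ and $J$ are all cyclically $4$-edge-connected and that part of the argument never used the girth bound.

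The hard part will be precisely this girth bookkeeping around the $K_{3,3}$-fragments: because Proposition~\ref{prop:nontriv-Halin} presupposes building blocks of girth at least $5$ whereas $K_{3,3}$ has girth $4$, nontriviality cannot be invoked directly and must be re-proved by tracking short cycles across the perimeter junctions and through the central vertex of the windmill. A secondary and more routine point is to justify that cyclically $4$-edge-connected cubic bipartite graphs of girth $6$ occur in every even order $\ge 14$, which I would settle once and for all by an explicit order-increasing expansion applied to the Heawood graph.
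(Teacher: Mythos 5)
Your derivation of $\pi\ge 5$ is the same as the paper's (Theorem~\ref{thm:Halin}, or directly Example~\ref{ex:windmill}, applied to Halin snarks over $K_4$), but your way of covering the orders is genuinely different. The paper does not use a single one-parameter family: it lets the treelike snarks of order $12k-2$ settle the residue class $10 \pmod{12}$, exhibits five explicit base graphs of orders $42$, $44$, $48$, $50$, $52$ built from the Heawood, M\"obius--Kantor, Desargues and Nauru graphs (order $50$ is reached by performing \emph{two} replacements inside $W_{34}$ rather than by finding an $18$-vertex bipartite block), and then proves a ``$+12$'' induction step by splicing a Heawood $(2,2)$-pole into a bipartite part of one fragment. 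The point of that design is that it only ever needs four specific, well-understood bipartite graphs. Your design instead requires a cyclically $4$-edge-connected cubic bipartite graph of girth $6$ on \emph{every} even number $2m\ge 14$ of vertices, and that is where your proof has a real gap.

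Concretely: you propose to obtain these graphs from the Heawood graph by ``a standard two-vertex expansion preserving bipartiteness, girth and cyclic $4$-edge-connectivity.'' The standard bipartite expansion (delete edges $a_1b_1$ and $a_2b_2$, add $x$ adjacent to $a_1,a_2,y$ and $y$ adjacent to $b_1,b_2,x$) does not automatically preserve girth $6$: it creates the $4$-cycle $a_1\,x\,a_2\,c\,a_1$ whenever $a_1$ and $a_2$ retain a common neighbour $c$, and in the Heawood graph --- the incidence graph of the Fano plane --- \emph{any} two vertices on the same side of the bipartition have a common neighbour. So the deleted edges must be chosen to destroy all such length-$2$ paths, and one must verify that the conditions persist under iteration. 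Suitable graphs do exist for every even order $\ge 14$, so the statement is salvageable, but as written this step is not the routine bookkeeping you describe, and without it the family $n=2m+28$ does not cover all even $n\ge 42$. A minor additional slip: in a Halin fragment $F=D\circ B$ the output connector is inherited from the bipartite part $B$, not from the decollineator $D$, so consecutive fragments are joined $B_i$-to-$D_{i+1}$; this does not break your girth analysis (the order-$34$ windmill snark is known to have girth $5$ and enlarging one bipartite block cannot shorten any cycle), but the structural claim as stated is wrong. On the other hand, your explicit observation that Proposition~\ref{prop:nontriv-Halin} cannot be invoked verbatim when $K_{3,3}$ is a building block, and your re-derivation of girth $\ge 5$ in that case, address a point the paper itself glosses over.
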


\begin{proof}
We prove that there exists a nontrivial Halin snark of every
even order $n\ge 42$. We know that there exists a treelike
snark order $n=12k-2$ for every integer $k\ge 3$. These snarks
are clearly nontrivial and are Halin. It is therefore
sufficient to establish the existence of nontrivial Halin
snarks of order $n$ for the remaining five even residue classes
modulo 12. To this end, we prove the following two claims.

\medskip\noindent
Claim 1. \textit{If there exists a nontrivial Halin snark on
$n$ vertices, then there is also one on $n+12$ vertices.}

\medskip\noindent
Proof of Claim~1. Let $H^{\sharp}=C^{\sharp}\cup T$ be a
nontrivial Halin snark of order $n$. In $H^{\sharp}$, choose an
arbitrary Halin fragment $F_h=D_h\circ B_h\subseteq
C^{\sharp}$, where $D_h$ is a decollineator and $B_h$ is a
bipartite $(2,2;1)$-pole. Take the Heawood graph, denoted by
$Hw$, which is a bipartite vertex- and edge-transitive cubic
graph of girth $6$ on 14 vertices, and create the $(2,2)$-pole
$G_{uv}$ for any fixed edge $uv$ of $G=Hw$; denote it by
$M_{Hw}$. Now, construct a new graph $K$ by replacing in
$H^{\sharp}$ the Halin fragment $F_h=D_h\circ B_h$ with the
$(2,2;1)$-pole $D_h\circ B_h\circ M_{Hw}$. It is easy to see
that $B_h\circ M_{Hw}$ is again a bipartite $(2,2;1)$-pole, so
$D_h\circ (B_h\circ M_{Hw})$ is a Halin fragment and $K$ is a
Halin snark. Its order is $n+12$, and since $Hw$ is cyclically
$6$-edge-connected (see \cite[Theorem~17]{NS-cc}), $K$ is also
nontrivial. This completes the proof of Claim~1.

\medskip

To finish the proof it suffices to prove the following.

\medskip\noindent
Claim 2. \textit{There exist nontrivial Halin snarks of order
$42$, $44$, $48$, $50$, and $52$.}

\medskip\noindent
Proof of Claim~2. In order to construct a nontrivial Halin
snark of order 42 we start with the windmill snark $W_{34}$ of
order 34, shown in Figure~\ref{fig:g34+46} (left). It is
created from the unique Halin graph of order 4, the complete
graph $K_4$, by substituting each vertex of the perimeter
circuit with the Petersen fragment. Recall that the Petersen
fragment can be expressed as $F_{Ps}=D_{Ps}\circ B$ where
$D_{Ps}$ is the decollineator constructed from the Petersen
graph and $B$ is the bipartite $(2,2;1)$-pole $G_{uwv}$ created
from the complete bipartite graph $K_{3,3}$. Let us change
$K_{3,3}$ to the Heawood graph to obtain a bipartite
$(2,2;1)$-pole $B_{Hw}$ of order 11. Replacing in $W_{34}$ the
Petersen fragment with the Halin fragment $D_{Ps}\circ B_{Hw}$
gives rise to a Halin snark $G_{42}$ of order 42. Again,
$G_{42}$ is nontrivial because $Hw$ is cyclically
$6$-edge-connected.

Nontrivial Halin snarks of order 44, 48, 52, can be constructed
in a manner similar to $G_{42}$. Instead of the Heawood graph
one has to use, respectively, the generalised Petersen graph
$GP(8,3)$ (known as the M\"obius-Kantor graph), the generalised
Petersen graph $GP(10,3)$ (known as the Desargues graph), and
the generalised Petersen graph $GP(12,5)$ (nicknamed the Nauru
graph), all of them vertex- and edge-transitive cubic bipartite
graphs of girth 6 (see \cite{FC}). They are all cyclically
$6$-edge-connected due to \cite[Theorem~17]{NS-cc}, implying
that the resulting Halin snarks are nontrivial.

Finally, we construct a nontrivial Halin snark of order 50. For
this purpose we use the graph $G_{42}$ constructed above,
choose one Petersen fragment $F_{Ps}=D_{Ps}\circ B$ left intact
by the construction of $G_{42}$ from $W_{34}$, and replace it
with $D_{Ps}\circ B_{Hw}$. The number of vertices increases by
$8$ to $50$, and the result is obviously a nontrivial Halin
snark.

The proof is complete.
\end{proof}

\section{Circular flows on Halin snarks}\label{sec:Halin-cfn5}

In this section we show that Halin snarks provide a rich source
of graphs with circular flow number at least $5$. Such graphs
are particularly interesting for the outstanding $5$-flow
conjecture of Tutte and therefore have been extensively studied
in a number of recent papers \cite{AKLM, EMT, GMM, MR}.

Given a real number $r\ge 2$, we define a \textit{nowhere-zero
real-valued $r$-flow} as an $\mathbb{R}$-flow $\phi$ such that
$1\le |\phi(e)|\le r-1$   for each edge $e$ of $G$. A
\textit{nowhere-zero modular $r$-flow} is an
$\mathbb{R}/r\mathbb{Z}$-flow $\phi$ such that $1\le \phi(e)
\pmod r \le r-1$ for each edge $e$. Here $x \pmod r$ denotes
the unique real number $x'\in[0,r)\subseteq\mathbb{R}$ such
that $x-x'$ is a multiple of $r$. It is a well-known folklore
fact that a graph admits a nowhere-zero real-valued $r$-flow if
and only if it admits a nowhere-zero modular $r$-flow.

The \textit{circular flow number} of a graph $G$, denoted by
$\Phi_c(G)$, is the infimum of the set of all real numbers $r$
such that $G$ has a nowhere-zero $r$-flow. This parameter was
introduced by Goddyn et al. in \cite{Goddyn} as
\textit{fractional flow number} and was shown to be a minimum
and a rational number for every (finite) bridgeless graph.

\begin{theorem}\label{thm:cfn5}
Let $G$ be a Halin snark in which all decollineators have been
obtained from snarks with circular flow number at least $5$.
Then $\Phi_c(G)\ge 5$.
\end{theorem}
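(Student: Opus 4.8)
The plan is to mirror the structure of the proof that $\pi(G)\ge 5$ (Theorems~\ref{thm:decol-trans}--\ref{thm:Halin}), replacing the finite geometry of $PG(3,2)$ by a quantitative analysis of real-valued flows. Suppose for contradiction that $\Phi_c(G)<5$. Since $G$ is a snark it is not $3$-edge-colourable, so $\Phi_c(G)>4$, and because the circular flow number is attained, $G$ carries a nowhere-zero real-valued $r$-flow $\phi$ with $r=\Phi_c(G)\in(4,5)$; thus every edge value lies in $[1,r-1]\subset[1,4)$ and, by Kirchhoff's law, each vertex is either $2$-in-$1$-out or $1$-in-$2$-out. I would keep the decomposition $G=[Z]$, where $Z$ is the extended Halin $(2,2)$-pole of Definition~\ref{def:Halin-dip}, together with the fragment decomposition of $C^{\sharp}$ into the $(2,2;1)$-poles $F_i=D_i\circ B_i$.

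First I would establish the circular analogue of the decollineator property of Theorem~\ref{thm:decol-trans}. Restricting $\phi$ to a copy of a decollineator $D_i=(G_i)_{u_iv_i}$ inside $G$ gives a nowhere-zero $r$-flow on the $(2,2)$-pole $D_i$. Applying Kirchhoff's law to the $4$-edge-cut bounding $D_i$ forces its two connectors to carry the same net (oriented) value $t$. If $t\in[1,r-1]$, then re-attaching the vertices $u_i$ and $v_i$ and assigning the common value $t$ to the edge $u_iv_i$ extends $\phi$ to a nowhere-zero $r$-flow on the snark $G_i$, contradicting the hypothesis $\Phi_c(G_i)\ge 5$. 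Hence the net value across every $D_i$ must be \emph{non-completable}, i.e.\ $t<1$ or $t>r-1$. This is precisely the circular counterpart of ``$D_i$ destroys collinearity'': a connector whose two values can be fused into a single admissible edge value plays the role of a collinear pair, and the decollineator forbids this on both sides simultaneously.

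Next I would use the fact that the bipartite constituents are transparent for circular flows: a cubic graph has a nowhere-zero $3$-flow exactly when it is bipartite, so each $B_i$ is highly flexible. I would prove circular versions of Lemma~\ref{lm:bip-balance} and Proposition~\ref{prop:bip-3pole}, describing how a nowhere-zero $r$-flow transports net values through a bipartite $(2,2;1)$-pole subject to the balance imposed by its bipartition. Feeding these into the series-and-closure structure $Z=D\circ Y$ and chasing the induced constraints cyclically around $C^{\sharp}$ --- in the inductive spirit of Theorems~\ref{thm:Halin(2,2;1)}--\ref{thm:ext-Halin(2,2)} --- should force at least one decollineator into a completable state, contradicting the previous paragraph and hence refuting $\Phi_c(G)<5$. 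The main obstacle is exactly this propagation-and-closure step: unlike the perfect-matching setting, one cannot reason with the finitely many shapes of $\boldsymbol{\Sigma}$ and the tidy relations $\mathcal{M}$, $\mathcal{M}'$, and $\mathcal{D}$, but must instead track continuous net-flow magnitudes and orientations through every junction and the attached tree edges, showing that the thresholds ``$t<1$'' and ``$t>r-1$'' forced at the decollineators cannot be reconciled around the perimeter cycle. Making this quantitative bookkeeping close up cleanly, while using bipartiteness to pin down the intermediate values, is the crux of the argument.
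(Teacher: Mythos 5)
Your first step is exactly the paper's key lemma: restricting the flow to a decollineator $D_i$ and observing that a net value in $[1,r-1]$ could be completed to a nowhere-zero $r$-flow on the underlying snark shows that the flow $a_i$ through each $D_i$ must lie in $(-1,1)$. From there, however, your plan diverges into an unexecuted ``propagation-and-closure'' argument around the whole perimeter circuit, and this is where there is a genuine gap. The constraint that propagation actually yields is the following: the tree edge $e_i=v_iv_i'$ attached to the fragment $F_i$ carries the value $a_i-a_{i+1}$ (up to orientation), so $|a_i-a_{i+1}|\ge 1$ while all $a_i\in(-1,1)$; equivalently, consecutive $a_i$ cannot both lie in $[0,1)$ nor both in $(-1,0]$. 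This is merely a proper $2$-colouring condition on the perimeter circuit, which is perfectly satisfiable whenever the circuit has even length. So chasing constraints cyclically around $C^{\sharp}$, even with a careful quantitative treatment of the bipartite pieces (which, being $3$-flowable, impose nothing beyond Kirchhoff here), cannot by itself produce a contradiction.

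The missing idea is to use the inscribed tree, and in fact only a tiny local piece of it: every cubic Halin graph contains a triangle, i.e.\ an inner tree vertex $u$ adjacent to two \emph{consecutive} perimeter vertices $v_0$ and $v_1$. With $a_1\in[0,1)$ (after negating the flow if necessary), the two tree edges $e_0=v_0u$ and $e_1=v_1u$ force $a_0,a_2\in(-1,0]$, hence $\phi(e_0)\in[1,2)$ and $\phi(e_1)\in(-2,-1]$ when both are directed away from $u$; Kirchhoff's law at $u$ then puts a value from $(-1,1)$ on the third edge at $u$, contradicting the definition of a nowhere-zero $r$-flow. This makes the whole proof a half-page local computation with no induction and no analogue of the relations $\mathcal{M}$, $\mathcal{M}'$, $\mathcal{D}$. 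You should replace the global closure step by this triangle argument; as stated, your plan would stall exactly at the step you yourself identify as the crux.
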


\begin{proof}
Let $H$ be a Halin graph with perimeter circuit $C=(v_0v_1\dots
v_{k-1})$ and inscribed tree $S$, and let $H^{\sharp}$ be a
Halin snark created from $H$ using decollineators obtained from
graphs with circular flow number at least 5. Recall that every
Halin graph contains a triangle; without loss of generality we
may assume that $v_0v_1u$ is a triangle in $H$, with $u$ being
a vertex of $S$. Set $e_0=v_0u$ and $e_1=v_1u$.

Suppose to the contrary that $\Phi_c(H^{\sharp})=r<5$, and let
$\phi$ be a nowhere-zero modular $r$-flow on $H^{\sharp}$.
Define the \textit{flow through a dipole $X$} as the sum of
flow-values (in $\mathbb{R}/r\mathbb{Z}$) on the dangling edges of the input 
connector directed towards the dipole; of course, this value coincides
with the sum of flow-values on the dangling edges in the output
connector of $X$ directed away from~$X$.

For $i\in\{0,1,\ldots,k-1\}$ let $a_i$ denote the flow through
the decollineator $D_i$ of $H^{\sharp}$. By our assumption,
each $D_i$ arises from a snark with circular flow number at
least 5, so each $a_i$ lies in the interval $(-1,1)$. Without
loss of generality we may assume that $a_1\in [0,1)$. As $\phi$
is an $r$-flow, both $\phi(e_0)$ and $\phi(e_1)$ are contained
in the interval $[1,r-1]$. It follows that both $a_0$ and $a_2$
are contained in $(-1,0]$. This in turn implies that
$\phi(e_0)\in[1,2)$ and $\phi(e_1)\in(-2,-1]$, provided that
$e_0$ and $e_1$ are in directed from $u$. However, the third
edge incident with $u$ now receives a value from the interval
$(-1,1)$, which contradicts the definition of a nowhere-zero
$r$-flow.
\end{proof}

The just proved theorem significantly generalises the result of
Abreu et al. \cite[Theorem~9]{AKLM} that the circular flow
number of every treelike snark is at least $5$. Indeed, every
treelike snark can serve as an ingredient for a Halin fragment that
satisfies the assumptions of Theorem~\ref{thm:cfn5} and thus
gives rises to new snarks with circular flow number at least $5$.
Clearly, this process can be iterated indefinitely.

\section{Concluding remarks}

The methods introduced in this paper can be used to produce a great variety of  
families of snarks with perfect matching index at least $5$. For example, Halin 
dipoles treated in Theorems~\ref{thm:Halin(2,2)} and~\ref{thm:ext-Halin(2,2)} 
can be variously combined with bipartite $(2;2;1)$-poles and dipoles 
constructed on the basis of Propositions~\ref{prop:bip22pole} 
and~\ref{prop:2decol}.
The family of Halin snarks itself
can be easily extended to a family of snarks of the form
$H^{\sharp}\cup S$, where $S$ is a
planar forest with any number of components rather than being just a
tree. One possibility to construct such a snark is to start 
with a Halin snark expressed as $G=[D\circ Y]$ where $D$ is a
decollineator and $Y$ is a Halin $(2,2)$-pole. Since $Y$ is a
collineator and the composition of two collineators is again a
collineator, we can create a graph $G^+_1=[D\circ
(Y_1\circ\ldots\circ Y_k)]$ where $Y_1,\ldots, Y_k$ are
arbitrary Halin $(2,2)$-poles. Corollary~\ref{cor:D} and
Theorem~\ref{thm:Halin(2,2)} immediately imply that $G^+_1$ has
no tetrahedral flow, whence $\pi(G^+_1)\ge 5$. Another possibility is
to take a graph $G^+_2=[Z_1\circ\ldots\circ Z_k]$ where
$Z_1,\ldots, Z_k$ are extended Halin $(2,2)$-poles. In this
case, the fact that $\pi(G^+_2)\ge 5$ follows from
Theorem~\ref{thm:ext-Halin(2,2)}. We can also develop special constructions 
such as superposition \cite{MS-pmi+superp} or a construction of nontrivial 
snarks with $\pi\ge 5$ and circular flow number strictly smaller than $5$, see 
\cite{MS-pmi+cfn}.

Our methods unfortunately fail for nontrivial snarks with perfect matching 
index at least $5$ of small order. We therefore leave the following open 
problem.

\begin{problem}
Do there exist nontrivial snarks of orders $38$ and $40$ with perfect matching 
index at least $5$?
\end{problem}

If $4$-cycles are permitted, then the answer to this question is positive. 
Cyclically $4$-edge-connected snarks with $\pi\ge 5$ containing a quadrilateral 
can be easily created from the snark shown in 
Figure~\ref{fig:g34+46}~(left) by replacing one or two copies of the bipartite 
$(2,2;1)$-pole on three vertices obtained from the complete bipartite graph 
$K_{3,3}$ with a similar $(2,2;1)$-pole created from the graph 
of the $3$-dimensional cube.

\subsection*{Acknowledgements}
The authors acknowledge partial support from the grants VEGA
1/0813/18 and APVV-15-0220.


\end{document}